\renewcommand{\leq}{\leqslant}
\renewcommand{\geq}{\geqslant}
\newcommand{\concat}{{^\frown}}
\newtheoremstyle{mythm}
{.5\baselineskip}	
{.5\baselineskip}	
{}		
{}		
{\bf}	
{. }		
{ }		
{}		
\theoremstyle{mythm}
\newtheorem{theorem}{Theorem}[section]	
\newtheorem{lemma}[theorem]{Lemma}
\newtheorem{proposition}[theorem]{Proposition}
\newtheorem{corollary}[theorem]{Corollary}
\newtheorem{definition}[theorem]{Definition}
\newtheorem{example}[theorem]{Example}
\newtheorem{remark}[theorem]{Remark}
\newtheorem{question}{Question}
\title{On Traditional Menger and Rothberger Variations}
\author{Christopher Caruvana}
\address{School of Sciences, Indiana University Kokomo, 2300 S. Washington Street, Kokomo, IN 46902 USA}
\email{caruvana@gmail.com}
\urladdr{https://chcaru.pages.iu.edu/}
\author{Steven Clontz}
\address{College of Arts and Sciences, University of South Alabama, 411 N University Blvd North, Mobile, AL 36688, USA}
\email{sclontz@southalabama.edu}
\urladdr{https://clontz.org}
\author{Jared Holshouser}
\address{Department of Mathematics, Norwich University, 158 Harmon Drive, Northfield, VT 05663 USA}
\email{JHolshou@norwich.edu}
\urladdr{https://jaredholshouser.github.io/}
\date{\today}
\subjclass{54D20, 91A44}
\keywords{Menger, Rothberger, \(k\)-covers, \(\omega\)-covers}
\begin{document}

\maketitle

\begin{abstract}
    We present a comprehensive report on the relationships between variations of the Menger and Rothberger selection properties
    with respect to \(\omega\)-covers and \(k\)-covers in the most general topological setting and
    address the finite productivity of some of these properties.
    We collect various examples that separate certain properties and
    we carefully identify which separation axioms simplify aspects of these properties.
    We finish with a consolidated list of open questions focused on topological examples.
\end{abstract}

\section{Introduction}

The purpose of this paper is to bear the relationships, summarized in Figures \ref{fig:KOmega} and \ref{fig:Omega},
between standard variations of the Menger and Rothberger properties in the most general topological settings
and to identify unresolved questions.
En route to proving the implications of the above-mentioned figures, we also prove novel results
about the productivity of certain properties related to selection principles
using \(k\)-covers (Theorems \ref{StratKRothProductive}, \ref{thm:StraKMengerProductive}, and \ref{MarkovKProductive}).
The implications of Figure \ref{fig:KOmega} are mostly established with Theorem \ref{thm:KImpliesOmegaImpliesOpen};
the implications of Figure \ref{fig:Omega} have been established previously, but in some cases under certain
assumptions about separation axioms.
Our contribution to this line is to remove those assumptions and still obtain the implications.

As an example, relationships between some variants of the Menger and Rothberger properties have originally surfaced in the context of \(C_p\)-theory,
where one usually assumes every space considered is Tychonoff.
One may wish to apply such equivalences to, for example, Hausdorff spaces, and consequently wonder if the
Tychonoff assumption is necessary.
Such equivalances, as well as others, will be proved herein without assuming any separation axioms.

We note that this work was inspired by updating the \(\pi\)-base \cite{PiBase}, a database that started as a digital expansion of \cite{Counterexamples}, and identifying certain gaps.
As such, some examples presented here will bear the names used in \cite{Counterexamples} and their ID in the \(\pi\)-base,
where applicable.

The paper is structured as follows.
Sections \ref{section:CoverPerspectives} and \ref{section:Background} review the notions we'll be working with
in this manuscript.
Section \ref{sec:GeneralResults} contains most of the theory which culminates in Theorem \ref{thm:KImpliesOmegaImpliesOpen},
though many results are presented for their own interest.
Section \ref{sec:GeneralResults} is developed through various subsections:
Section \ref{subsection:FinitePowers} covers the relationships between the \(\omega\)-variants and finite
powers of a space, Section \ref{subsection:CompactCovers} discusses the \(k\)-cover analogues, and
Section \ref{subsection:RelatingTheGames} summarizes some basic conclusions based on the previous two sections.
Sections \ref{subsection:FinitePowers} and \ref{subsection:CompactCovers} are organized in terms of increasing strength,
in a sense, and that order is motivated by the fact that traditional selection principles are the most
widely studied of the levels included here and so should appear sooner rather than later.
We also typically lead with the finite-selection versions before single-selection versions,
a choice also motivated by \emph{strength}.
Once the implications and equivalences have been proved, Section \ref{section:Examples} provides
a list of examples that show that certain implications do not reverse.
Lastly, Section \ref{section:Questions} collects some questions the authors have not been able to answer,
and is mostly a search for examples of topological spaces that separate what seem to be very similar
properties.

Unless otherwise noted, no separation axioms are assumed, so when we say ``for any space \(X\),'' there is no implicit
assumption that \(X\) is, for example, Hausdorff.
When we do require separation axioms, we will use the term \emph{regular} without assuming \(T_1\);
we will use \(T_3\) for regular and \(T_1\).
Any terms used without being defined are to be understood as in \cite{Engelking}.

\begin{figure}
    \centering
        \begin{tikzpicture}
            \node (MKR) at (-8,2.5) {Markov \(k\)-Rothberger};
            \node (SKR) at (-3.5,2.5) {Strat. \(k\)-Rothberger};
            \node (KR) at (0,2.5) {\(k\)-Rothberger};
            
            \node (MKM) at (-8,1.25) {Markov \(k\)-Menger};
            \node (SKM) at (-3.5,1.25) {Strat. \(k\)-Menger};
            \node (KM) at (0,1.25) {\(k\)-Menger};
            \node (KL) at (3.5,1.25) {\(k\)-Lindel{\"{o}}f};
    
            \node (MOM) at (-8,0) {Markov \(\omega\)-Menger};
            \node (SOM) at (-3.5,0) {Strat. \(\omega\)-Menger};
            \node (OM) at (0,0) {\(\omega\)-Menger};
            \node (OL) at (3.5,0) {\(\omega\)-Lindel{\"{o}}f};
            
            \node (MOR) at (-8,-1.25) {Markov \(\omega\)-Rothberger};
            \node (SOR) at (-3.5,-1.25) {Strat. \(\omega\)-Rothberger};
            \node (OR) at (0,-1.25) {\(\omega\)-Rothberger};
    
            \draw [-implies,double equal sign distance] (MKR) -- (SKR) node [midway, above] {\scriptsize \(\not\!\Leftarrow\)};
            \draw [-implies,double equal sign distance] (SKR) -- (KR);
            \draw [-implies,double equal sign distance] (MKR) -- (MKM);
            \draw [-implies,double equal sign distance] (SKR) -- (SKM);
            \draw [-implies,double equal sign distance] (KR) -- (KM);
            \draw [-implies,double equal sign distance] (MKM) -- (SKM) node [midway, above] {\scriptsize \(\not\!\Leftarrow\)};
            \draw [-implies,double equal sign distance] (SKM) -- (KM);
            \draw [-implies,double equal sign distance] (KM) -- (KL) node [midway, above] {\scriptsize \(\not\!\Leftarrow\)};
    
            \draw [-implies,double equal sign distance] (MOM) -- (SOM) node [midway, above] {\scriptsize \(\not\!\Leftarrow\)};
            \draw [-implies,double equal sign distance] (SOM) -- (OM);
            \draw [-implies,double equal sign distance] (OM) -- (OL) node [midway, above] {\scriptsize \(\not\!\Leftarrow\)};
    
            \draw [-implies,double equal sign distance] (KM) -- (OM) node [midway, right] {\scriptsize \(\not\Uparrow\)};
            \draw [-implies,double equal sign distance] (KL) -- (OL);
    
            \draw [-implies,double equal sign distance] (MOR) -- (SOR) node [midway, above] {\scriptsize \(\not\!\Leftarrow\)};
            \draw [-implies,double equal sign distance] (SOR) -- (OR);
    
            \draw [-implies,double equal sign distance] (MOR) -- (MOM) node [midway, right] {\scriptsize \(\not\hspace{0.4pt}\Downarrow\)};
            \draw [-implies,double equal sign distance] (SOR) -- (SOM) node [midway, right] {\scriptsize \(\not\hspace{0.4pt}\Downarrow\)};
            \draw [-implies,double equal sign distance] (OR) -- (OM);
    
            \draw [-implies,double equal sign distance] (SKM) -- (SOM) node [midway, right] {\scriptsize \(\not\Uparrow\)};
            \draw [-implies,double equal sign distance] (MKM) -- (MOM) node [midway, right] {\scriptsize \(\not\Uparrow\)};
        \end{tikzpicture}
    \caption{\(k\)- and \(\omega\)-variants in ZFC}
    \label{fig:KOmega}
\end{figure}
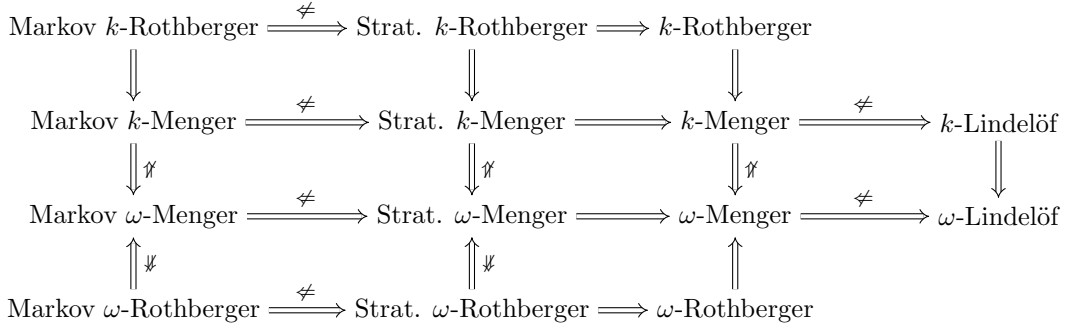

\begin{figure}
    \centering
    \begin{tikzpicture}
        \node (MOM) at (-8,1.25) {Markov \(\omega\)-Menger};
        \node (SOM) at (-3.5,1.25) {Strat. \(\omega\)-Menger};
        \node (OM) at (0,1.25) {\(\omega\)-Menger};
        \node (OL) at (3.5,1.25) {\(\omega\)-Lindel{\"{o}}f};
        
        \node (MOR) at (-8,2.5) {Markov \(\omega\)-Rothberger};
        \node (SOR) at (-3.5,2.5) {Strat. \(\omega\)-Rothberger};
        \node (OR) at (0,2.5) {\(\omega\)-Rothberger};

        \node (MR) at (-8,3.75) {Markov Rothberger};
        \node (SR) at (-3.5,3.75) {Strat. Rothberger};
        \node (R) at (0,3.75) {Rothberger};

        \node (MM) at (-8,0) {Markov Menger};
        \node (SM) at (-3.5,0) {Strat. Menger};
        \node (M) at (0,0) {Menger};
        \node (L) at (3.5,0) {Lindel{\"{o}}f};

        \draw [-implies,double equal sign distance] (MOM) -- (SOM) node [midway, above] {\scriptsize \(\not\!\Leftarrow\)};
        \draw [-implies,double equal sign distance] (SOM) -- (OM);
        \draw [-implies,double equal sign distance] (OM) -- (OL) node [midway, above] {\scriptsize \(\not\!\Leftarrow\)};

        \draw [-implies,double equal sign distance] (MOR) -- (SOR) node [midway, above] {\scriptsize \(\not\!\Leftarrow\)};
        \draw [-implies,double equal sign distance] (SOR) -- (OR);

        \draw [-implies,double equal sign distance] (MOR) -- (MOM) node [midway, right] {\scriptsize \(\not\Uparrow\)};
        \draw [-implies,double equal sign distance] (SOR) -- (SOM) node [midway, right] {\scriptsize \(\not\Uparrow\)};
        \draw [-implies,double equal sign distance] (OR) -- (OM) node [midway, right] {\scriptsize \(\not\Uparrow\)};

        \draw [implies-implies,double equal sign distance] (MM) -- (MOM);
        \draw [implies-implies,double equal sign distance] (SM) -- (SOM);
        \draw [-implies,double equal sign distance] (OM) -- (M);
        \draw [-implies,double equal sign distance] (OL) -- (L) node [midway, right] {\scriptsize \(\not\Uparrow\)};

        \draw [-implies,double equal sign distance] (MM) -- (SM) node [midway, above] {\scriptsize \(\not\!\Leftarrow\)};
        \draw [-implies,double equal sign distance] (SM) -- (M);
        \draw [-implies,double equal sign distance] (M) -- (L) node [midway, above] {\scriptsize \(\not\!\Leftarrow\)};

        \draw [-implies,double equal sign distance] (MR) -- (SR) node [midway, above] {\scriptsize \(\not\!\Leftarrow\)};
        \draw [-implies,double equal sign distance] (SR) -- (R);

        \draw [-implies,double equal sign distance] (OR) -- (R);

        \draw [implies-implies,double equal sign distance] (MOR) -- (MR);
        \draw [implies-implies,double equal sign distance] (SOR) -- (SR);
    \end{tikzpicture}
    \caption{\(\omega\)- and standard-variants in ZFC}
    \label{fig:Omega}
\end{figure}
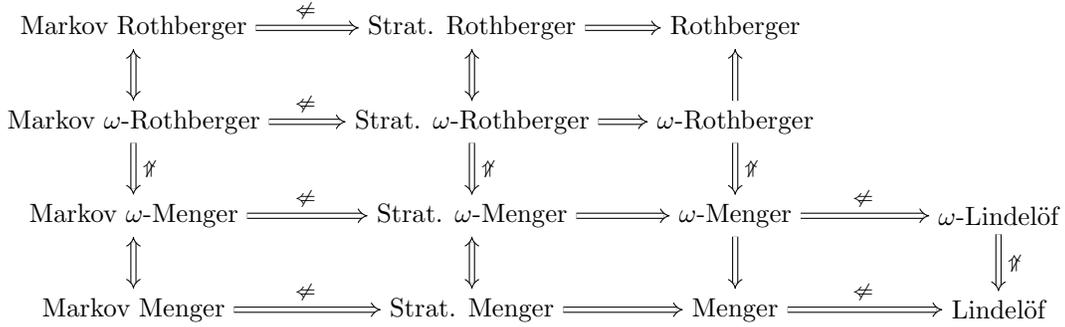

\section{Three perspectives on cover collections} \label{section:CoverPerspectives}

The definition of \(\mathcal O\), the collection of all open covers of a space \(X\),
is standard and used consistently through the literature (with few exceptions, where \(X\) is excluded from open covers; e.g. \cite{OspiovCompactOpen}).
However, the reader should note that there are two standard definitions for \(\Omega\),
one which merely requires that for each \(\mathcal W\in\Omega\) and finite \(F\subseteq X\),
there exists \(W\in\mathcal W\) with \(F\subseteq W\) \cite{GerlitsNagy},
and another which additionally disallows \(X\in\mathcal W\) for each \(\mathcal W\in\Omega\)
\cite{scheepers1997sequential}; i.e., the cover is not ``trivial''. And on occasion, authors
find it necessary to consider covers as sequences rather than sets \cite{CHStar}.

This ambiguity can result in some heartburn for the careful mathematician: do results for one
\(\Omega\) hold for the other? And why is this discrepancy in the literature in the first
place?

To understand this better, we present several characterizations used in the literature
for the standard cover collections \(\mathcal O,\Lambda,\Omega,\Gamma,\mathcal K\).

\begin{definition}
    Let \(\mathcal O_1=\mathcal O_2\) collect all open covers of a topological space \(X\).

    Let \(\mathcal O_3\) collect all transfinite sequences of open sets
    \(\langle U_\beta\rangle_{\beta<\alpha}\) such that
    \(\{U_\beta:\beta<\alpha\}\) forms an open cover of \(X\).
\end{definition}

\begin{definition}
    Let \(\Lambda_1\) collect all open covers \(\mathcal L\) of \(X\) such that
    for each \(x\in X\), \(\{L\in\mathcal L:x\in L\}\) is infinite.
    
    Let \(\Lambda_2\) collect all open covers \(\mathcal L\) of \(X\) such that either,
    for each \(x\in X\), \(\{L\in\mathcal L:x\in L\}\) is infinite,
    or \(X\in\mathcal L\).

    Let \(\Lambda_3\) collect all transfinite sequences of open sets
    \(\langle L_\beta\rangle_{\beta<\alpha}\) such that either
    for each \(x\in X\), \(\{\beta<\alpha:x\in L_\beta\}\) is infinite,
    or \(X=L_\beta\) for some \(\beta<\alpha\).

    These are known as \emph{large} or \(\lambda\)-covers.
\end{definition}

\begin{definition}
    Let \(\Omega_1\) collect all open covers \(\mathcal W\) of \(X\) such that
    for each finite \(F\subseteq X\), there exists \(W\in\mathcal W\) with \(F\subseteq W\),
    and \(X\not\in\mathcal W\).
    
    Let \(\Omega_2\) collect all open covers \(\mathcal W\) of \(X\) such that
    for each finite \(F\subseteq X\), there exists \(W\in\mathcal W\) with \(F\subseteq W\).

    Let \(\Omega_3\) collect all transfinite sequences of open sets
    \(\langle W_\beta\rangle_{\beta<\alpha}\) such that
    for each finite \(F\subseteq X\), there exists \(\beta<\alpha\) with \(F\subseteq W_\beta\).

    These are known as \(\omega\)-covers.
\end{definition}

\begin{definition}
    Let \(\Gamma_1\) collect all open covers \(\mathcal C\) of \(X\) such that
    for each \(x\in X\), \(\{C\in\mathcal C:x\in C\}\) is infinite and co-finite,
    and \(X\not\in\mathcal C\).
    
    Let \(\Gamma_2\) collect all open covers \(\mathcal C\) of \(X\) such that either
    for each \(x\in X\), \(\{C\in\mathcal C:x\in C\}\) is infinite and co-finite,
    or \(X\in\mathcal C\) and \(\mathcal C\) is finite.

    Let \(\Gamma_3\) collect all transfinite sequences of open sets
    \(\langle C_\beta\rangle_{\beta<\alpha}\) such that either
    for each \(x\in X\), \(\{\beta<\alpha:x\in C_\beta\}\) is infinite and co-finite.

    These are known as \(\gamma\)-covers.
\end{definition}

\begin{definition}
    Let \(\mathcal K_1\) collect all open covers \(\mathcal V\) of \(X\) such that
    for each compact \(K\subseteq X\), there exists \(V\in\mathcal V\) with \(K\subseteq V\),
    and \(X\not\in\mathcal V\).
    
    Let \(\mathcal K_2\) collect all open covers \(\mathcal V\) of \(X\) such that
    for each compact \(K\subseteq X\), there exists \(V\in\mathcal V\) with \(K\subseteq V\).

    Let \(\mathcal K_3\) collect all transfinite sequences of open sets
    \(\langle V_\beta\rangle_{\beta<\alpha}\) such that
    for each compact \(K\subseteq X\), there exists \(\beta<\alpha\) with \(K\subseteq V_\beta\).

    These are known as \(k\)-covers.
\end{definition}

We have then the following relationships.

\begin{proposition}
    For each \(i\in\{1,2,3\}\),
    \(\mathcal K_i\subseteq\Omega_i\) and
    \(\Gamma_i\subseteq\Omega_i\subseteq\Lambda_i\subseteq\mathcal O_i\).
\end{proposition}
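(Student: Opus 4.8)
The plan is to verify the four inclusions one at a time, observing that for a fixed cover type the three indexings run in parallel and differ only in how the ``non-triviality'' data is recorded: \(\Omega_1\) forbids \(X\) outright, \(\Omega_2\) permits \(X\) as a member, and \(\Omega_3\) works with indexed transfinite sequences (and similarly for \(\Lambda\) and \(\Gamma\)). So I would prove each containment once and then attend to these three flavors of bookkeeping.

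For \(\mathcal K_i\subseteq\Omega_i\), the only observation needed is that every finite \(F\subseteq X\) is compact. Hence any \(\mathcal V\) (or sequence \(\langle V_\beta\rangle\)) that captures each compact set inside a single member a fortiori captures each finite set inside a single member, which is precisely the defining \(\omega\)-cover condition; the exclusion clause \(X\notin\mathcal V\) in \(\mathcal K_1\) matches the clause \(X\notin\mathcal W\) in \(\Omega_1\), so no adjustment is required across the three indices. For \(\Gamma_i\subseteq\Omega_i\) I would use the cofinite half of the \(\gamma\)-condition: given a finite \(F=\{x_1,\dots,x_n\}\), for each \(j\) the collection of members (indices, in the sequence case) failing to contain \(x_j\) is finite, so taking the union over \(j\) all but finitely many members contain every point of \(F\) at once, and since the family is infinite at least one such member exists. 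For \(\Gamma_2\) one also treats the alternative \(X\in\mathcal C\) directly, since then \(X\) itself captures every finite \(F\).

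The crux is \(\Omega_i\subseteq\Lambda_i\), because an \(\omega\)-cover does not obviously place each point in infinitely many members, and this is exactly where the non-triviality hypothesis does the work. I would argue by contradiction: if a point \(x\) lies in only the finitely many members \(W_1,\dots,W_n\), then using \(X\notin\mathcal W\) (equivalently, that no member equals \(X\)) I choose \(y_j\notin W_j\) for each \(j\) and apply the \(\omega\)-condition to the finite set \(\{x,y_1,\dots,y_n\}\); the member capturing it must contain \(x\), hence equals some \(W_j\), yet omits \(y_j\), a contradiction. Thus each point lies in infinitely many members, giving \(\Lambda_1\). For \(\Omega_2\) and \(\Omega_3\) the same argument applies whenever no member equals \(X\); if instead some member is \(X\), one falls directly into the \(X\in\mathcal L\) (respectively \(X=L_\beta\)) alternative of the \(\Lambda\)-definition.

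Finally, \(\Lambda_i\subseteq\mathcal O_i\) is immediate: either each point lies in infinitely many (hence at least one) members, or \(X\) is itself a member, so in every case the family forms an open cover, and for \(i=3\) the associated sequence qualifies for \(\mathcal O_3\). The whole argument is thus a systematic verification; the single genuinely nontrivial step is \(\Omega_i\subseteq\Lambda_i\), whose subtlety is that the inclusion would fail without the non-triviality clause (the family \(\{X\}\) is an \(\omega\)-cover but not a \(\lambda\)-cover), so I would take care to foreground exactly where that clause is invoked.
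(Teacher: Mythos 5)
Your proof is correct; the paper states this proposition without giving any proof, and your verification is exactly the routine argument it implicitly relies on. You have also correctly isolated the one nontrivial step, \(\Omega_i\subseteq\Lambda_i\), where the finite-set trick (choosing \(y_j\notin W_j\) and covering \(\{x,y_1,\dots,y_n\}\)) together with the non-triviality clause is genuinely needed, and your handling of the three indexing conventions is consistent with the paper's definitions.
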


Perhaps motivating the disqualification of \(X\) from an \(\omega\)-cover
as in \(\Omega_1\) is the guarantee that it disallows all finite \(\omega\)-covers.

\begin{proposition}
    If \(\mathcal W\in\Omega_2\) is finite,
    then \(X\in\mathcal W\) (and thus \(\mathcal W\not\in\Omega_1\)).
\end{proposition}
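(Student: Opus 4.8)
The plan is to argue by contradiction, exploiting the fact that finiteness gives us only finitely many sets to evade simultaneously, while the defining property of \(\Omega_2\) promises a single member of the cover containing any prescribed finite set. So I would begin by supposing toward a contradiction that \(X\notin\mathcal W\), and aim to manufacture a finite \(F\subseteq X\) that is contained in no member of \(\mathcal W\), contradicting membership in \(\Omega_2\).

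The key construction uses the finiteness hypothesis directly. Since \(\mathcal W\) is finite, I would enumerate it as \(\mathcal W=\{W_1,\dots,W_n\}\). The assumption \(X\notin\mathcal W\) means \(W_i\neq X\) for every \(i\), so for each \(i\le n\) I may choose a witness point \(x_i\in X\setminus W_i\). Setting \(F=\{x_1,\dots,x_n\}\) yields a finite subset of \(X\). Now the \(\Omega_2\) property supplies some \(W_j\in\mathcal W\) with \(F\subseteq W_j\); but then \(x_j\in F\subseteq W_j\), contradicting the choice \(x_j\notin W_j\). Hence \(X\in\mathcal W\).

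There is essentially no substantive obstacle here: the argument is a one-line pigeonhole once the witness set \(F\) is assembled, and the only point requiring care is that \(X\notin\mathcal W\) is exactly what guarantees each \(W_i\) omits at least one point, so that the \(x_i\) can be chosen. The parenthetical conclusion \(\mathcal W\notin\Omega_1\) is then immediate, since by definition every member of \(\Omega_1\) satisfies \(X\notin\mathcal W\); having just shown \(X\in\mathcal W\), the cover \(\mathcal W\) cannot lie in \(\Omega_1\).
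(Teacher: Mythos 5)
Your argument is correct and is essentially the paper's proof: both select a witness point \(x_i\in X\setminus W_i\) for each member of the finite cover and observe that the resulting finite set \(F\) lies in no single \(W_i\). The only difference is cosmetic --- you phrase it as a contradiction while the paper states it as a contrapositive.
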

\begin{proof}
    We prove the contrapositive by assuming \(X\not\in\mathcal W=\{W_0,\dots,W_n\}\).
    Choose \(x_i\in X\setminus W_i\); it follows that
    \(\{x_0,\dots,x_n\}\not\subseteq W_i\) for any \(i\leq n\); therefore
    \(\mathcal W\not\in\Omega_2\).
\end{proof}

It is the preference of the authors of this manuscript
to assume \(i\in\{2,3\}\); the literature is
filled with minor errata that arise when disallowing \(X\) in the open cover.
(E.g., a common technique to obtain an \(\omega\)-cover from an arbitrary open cover
is to close it under finite unions; however, if the open cover contains a finite subcover,
this would not obtain an \(\omega\)-cover in the sense of \(i=1\).) Furthermore,
while it's out of scope to explore in-depth here, the case where \(X\) belongs to a cover
has a nice analog in \(C_p\)-theory: the \(\gamma\)-covers in \(X\) correspond
to the sequences converging to \(\mathbf 0\) in \(C_p(X)\), and when
\(X\) belongs to the \(\gamma\)-cover, we may consider a trivial sequence in \(C_p(X)\).

Regardless,
all results proven in this paper can be shown true no matter what characterization is
considered, and we will not specify a subscript \(i\in\{1,2,3\}\).
For references cited, while the distinctions can generally be hand-waved away,
the reader should be aware that most authors consider at most one of these
three characterizations.

\section{Background and Preliminaries} \label{section:Background}

We will use the standard definition of \(\omega\) where \(n \in \omega\)
is \(\{ m \in \omega : m \in n \}\).
Hence, given \(A \subseteq \omega\) and \(n \in \omega\),
we may write \(A \subseteq n\).
We let \([X]^{<\omega}\) denote the set of all finite subsets of a set \(X\).
We will use \(\pi_X : X \times Y \to X\) and \(\pi_Y : X \times Y \to Y\)
to denote the usual coordinate projection mappings.

We will use \(\mathcal O_X\) to denote the collection of all open covers of \(X\),
viewing \(\mathcal O\) as a \emph{topological operator}.
A topological operator is a class function defined on the class of all topological spaces.
Another topological operator that will appear here is \(\mathscr T\), the topological operator that produces
all non-empty open subsets of a space \(X\).

When a topological space \(X\) is given and \(A \subseteq X\), we denote the neighborhood system
\(\{ U \in \mathscr T_X : A \subseteq U \}\) about \(A\) with \(\mathcal N_A\).
For \(x \in X\), we use the simplified notation \(\mathcal N_x\) instead of \(\mathcal N_{\{x\}}\),
if there is no risk of confusion.
Also, for a collection \(\mathcal A\) of subsets of \(X\),
\(\mathcal N(\mathcal A) = \{ \mathcal N_A : A \in \mathcal A \}\).

We will consider two other kinds of open covers.
\begin{definition}
    For a space \(X\), an open cover \(\mathscr U\) of \(X\) is said to be
    \begin{itemize}
        \item
        an \emph{\(\omega\)-cover} of \(X\) if every finite subset of \(X\) is contained in a member of \(\mathscr U\).
        \item 
        a \emph{\(k\)-cover} of \(X\) if every compact subset of \(X\) is contained in a member of \(\mathscr U\).
    \end{itemize}
    We will let \(\Omega\) (resp. \(\mathcal K\)) be the topological operator which produces \(\Omega_X\) (resp. \(\mathcal K_X\)), the set of all
    \(\omega\)-covers (resp. \(k\)-covers) of a space \(X\).
\end{definition}

The notion of \(\omega\)-covers is commonly attributed to \cite{GerlitsNagy}, but they were already in use in \cite{McCoyOmegaCovers}
where they are refereed to as \emph{open covers for finite sets}.
The notion of \(k\)-covers appears as early as \cite{McCoyKcovers} in which they are referred to as \emph{open covers for compact subsets.}

We remind the reader of a generalization of the above-mentioned cover types.
\begin{definition}
    Let \(\mathcal A\) be a collection of subsets of a space \(X\).
    Then we define the \(\mathcal A\)-covers, denoted by \(\mathcal O_X(\mathcal A)\),
    to be the collection of all open covers \(\mathscr U\) of \(X\)
    such that, for each \(A \in \mathcal A\), there is some \(U \in \mathscr U\) such that \(A \subseteq U\).
\end{definition}

We recall the usual selection principles.
For more details on selection principles and relevant references, see \cite{ScheepersI,KocinacSelectedResults,ScheepersSelectionPrinciples,ScheepersNoteMat}.
\begin{definition}
    Let \(\mathcal A\) and \(\mathcal B\) be sets.
    Then the single- and finite-selection principles are defined, respectively, to be the properties
    \[\mathsf S_1(\mathcal A, \mathcal B) \equiv 
    \left(\forall A \in \mathcal A^\omega\right)\left(\exists B \in \prod_{n \in \omega} A_n\right)\ \{B_n : n \in \omega\} \in \mathcal B\]
    and
    \[\mathsf S_{\mathrm{fin}}(\mathcal A, \mathcal B) \equiv 
    \left(\forall A \in \mathcal A^\omega\right)\left(\exists B \in \prod_{n \in \omega} [A_n]^{<\omega}\right)\ \bigcup\{B_n : n \in \omega\} \in \mathcal B.\]
    Following \cite{ScheepersNoteMat}, for a space \(X\) and topological operators \(\mathcal A\) and \(\mathcal B\),
    we write \(X \models \mathsf S_\square(\mathcal A, \mathcal B)\), where \(\square \in \{ 1 , \mathrm{fin} \}\),
    to mean that \(X\) satisfies the selection principle \(\mathsf S_\square(\mathcal A_X, \mathcal B_X)\).
\end{definition}
Using this notation, recall that a space \(X\) is \emph{Menger} (resp. \emph{Rothberger}) if \(X \models \mathsf S_{\mathrm{fin}}(\mathcal O, \mathcal O)\)
(resp. \(X \models \mathsf S_1(\mathcal O, \mathcal O)\)).

Selection principles have naturally corresponding selection games, which include types of topological games.
Topological games have a long history, much of which can be gathered from Telg{\'a}rsky's survey
\cite{TelgarskySurvey}.
In this paper, we consider the traditional selection games for two players, P1 and P2, of countably infinite length.
\begin{definition}
    Given sets \(\mathcal A\) and \(\mathcal B\), we define the \emph{finite-selection game}
    \(\mathsf{G}_{\mathrm{fin}}(\mathcal A, \mathcal B)\) for \(\mathcal A\) and \(\mathcal B\) as follows.
    In round \(n \in \omega\), P1 plays \(A_n \in \mathcal A\) and P2 responds with \(\mathscr F_n \in [A_n]^{<\omega}\).
    We declare P2 the winner if \(\bigcup\{ \mathscr F_n : n \in \omega \} \in \mathcal B\).
    Otherwise, P1 wins.
\end{definition}
\begin{definition}
    Given sets \(\mathcal A\) and \(\mathcal B\), we analogously define the \emph{single-selection game}
    \(\mathsf{G}_{1}(\mathcal A, \mathcal B)\) for \(\mathcal A\) and \(\mathcal B\) as follows.
    In round \(n \in \omega\), P1 plays \(A_n \in \mathcal A\) and P2 responds with \(x_n \in A_n\).
    We declare P2 the winner if \(\{x_n : n \in \omega \} \in \mathcal B\).
    Otherwise, P1 wins.
\end{definition}
\begin{definition}
    By \emph{selection games}, we mean the class consisting of \(\mathsf G_\square(\mathcal A, \mathcal B)\)
    where \(\square \in \{1, \mathrm{fin} \}\), and \(\mathcal A\) and \(\mathcal B\) are sets.
    So, when we say \(\mathcal G\) is a selection game, we mean that there exist \(\square \in \{1 , \mathrm{fin} \}\)
    and sets \(\mathcal A, \mathcal B\) so that \(\mathcal G = \mathsf G_\square(\mathcal A, \mathcal B)\).
\end{definition}
The study of games naturally inspires questions about the existence of various kinds of strategies.
Infinite games and corresponding full-information strategies were both introduced in \cite{GaleStewart}.
Some forms of limited-information strategies came shortly after, like positional (also known as stationary) strategies \cite{DavisGames,SchmidtGames}.
For more on stationary and Markov strategies, see \cite{GalvinTelgarsky}.
\begin{definition}
    We define strategies of various strengths below.
    \begin{itemize}
    \item
    A \emph{strategy for P1} in \(\mathsf{G}_1(\mathcal A, \mathcal B)\) is a function
    \(\sigma:(\bigcup \mathcal A)^{<\omega} \to \mathcal A\).
    A strategy \(\sigma\) for P1 is called \emph{winning} if whenever \(x_n \in \sigma\langle x_k : k < n \rangle\)
    for all \(n \in \omega\), \(\{x_n: n\in\omega\} \not\in \mathcal B\).
    If P1 has a winning strategy, we write \(\mathrm{I} \uparrow \mathsf{G}_1(\mathcal A, \mathcal B)\).
    \item
    A \emph{strategy for P2} in \(\mathsf{G}_1(\mathcal A, \mathcal B)\) is a function
    \(\sigma:\mathcal A^{<\omega} \to \bigcup \mathcal A\).
    A strategy \(\sigma\) for P2 is \emph{winning} if whenever \(A_n \in \mathcal A\) for all \(n \in \omega\),
    \(\{\sigma(A_0,\ldots,A_n) : n \in \omega\} \in \mathcal B\).
    If P2 has a winning strategy, we write \(\mathrm{II} \uparrow \mathsf{G}_1(\mathcal A, \mathcal B)\).
    \item
    A \emph{predetermined strategy} for P1 is a strategy which only considers the current turn number.
    Formally it is a function \(\sigma: \omega \to \mathcal A\).
    If P1 has a winning predetermined strategy, we write \(\mathrm{I} \underset{\mathrm{pre}}{\uparrow} \mathsf{G}_1(\mathcal A, \mathcal B)\).
    \item
    A \emph{Markov strategy} for P2 is a strategy which only considers the most recent move of P1 and the current turn number.
    Formally it is a function \(\sigma :\mathcal A \times \omega \to \bigcup \mathcal A\).
    If P2 has a winning Markov strategy, we write
    \(\mathrm{II} \underset{\mathrm{mark}}{\uparrow} \mathsf{G}_1(\mathcal A, \mathcal B)\).
    \item
    If there is a single element \(A_0 \in \mathcal A\) so that the constant function with value \(A_0\) is a winning strategy for P1, we say that P1 has a \emph{constant winning strategy}, denoted by \(\mathrm{I} \underset{\mathrm{cnst}}{\uparrow} \mathsf{G}_1(\mathcal A, \mathcal B)\).
    \end{itemize}
    These definitions can be extended to \(\mathsf{G}_{\mathrm{fin}}(\mathcal A, \mathcal B)\) in the obvious way.
\end{definition}
Note that, for any selection game \(\mathcal G\),
\[
    \mathrm{II} \underset{\mathrm{mark}}{\uparrow} \mathcal G
    \implies \mathrm{II} \uparrow \mathcal G
    \implies \mathrm{I} \not\uparrow \mathcal G
    \implies \mathrm{I} \underset{\mathrm{pre}}{\not\uparrow} \mathcal G
    \implies \mathrm{I} \underset{\mathrm{cnst}}{\not\uparrow} \mathcal G.
\]
\begin{definition}
    For two selection games \(\mathcal G\) and \(\mathcal H\), we write \(\mathcal G \leq_{\mathrm{II}} \mathcal H\)
    if each of the following hold:
    \begin{itemize}
        \item 
        \(\mathrm{II} \underset{\mathrm{mark}}{\uparrow} \mathcal G \implies \mathrm{II} \underset{\mathrm{mark}}{\uparrow} \mathcal H\)
        \item 
        \(\mathrm{II} \uparrow \mathcal G \implies \mathrm{II} \uparrow \mathcal H\)
        \item 
        \(\mathrm{I} \not\uparrow \mathcal G \implies \mathrm{I} \not\uparrow \mathcal H\)
        \item 
        \(\mathrm{I} \underset{\mathrm{pre}}{\not\uparrow} \mathcal G \implies \mathrm{II} \underset{\mathrm{pre}}{\not\uparrow} \mathcal H\)
    \end{itemize}
    If, in addition,
    \begin{itemize}
        \item 
        \(\mathrm{I} \underset{\mathrm{cnst}}{\not\uparrow} \mathcal G \implies \mathrm{II} \underset{\mathrm{cnst}}{\not\uparrow} \mathcal H,\)
    \end{itemize}
    we write that \(\mathcal G \leq^+_{\mathrm{II}} \mathcal H\).
\end{definition}
Note that, for any sets \(\mathcal A\) and \(\mathcal B\),
\[\mathsf G_1(\mathcal A, \mathcal B) \leq^+_{\mathrm{II}} \mathsf G_{\mathrm{fin}}(\mathcal A, \mathcal B).\]

We use the notation \(\leq_{\mathrm{II}}\) to emphasize the fact that this partial order transfers winning
plays for P2.
As an example, note that \(\mathrm{I} \not\uparrow \mathcal G\) means that, for any strategy that P1 employs,
there exists a play by P2 that wins against that strategy.
Then the implication in the definition of \(\leq_{\mathrm{II}}\) would indicate that P2 can accordingly
win against any strategy employed by P1 in \(\mathcal H\).

Below, we will write statements such as \(\mathsf G_\square(\mathcal A, \mathcal A) \leq_{\mathrm{II}} \mathsf G_\square(\mathcal B, \mathcal B)\)
where \(\mathcal A\) and \(\mathcal B\) are topological operators to mean that, for every topological space \(X\),
\(\mathsf G_\square(\mathcal A_X, \mathcal A_X) \leq_{\mathrm{II}} \mathsf G_\square(\mathcal B_X, \mathcal B_X)\).

\begin{remark} \label{remark:LindelofAndSelection}
    The following are mentioned in \cite[Prop. 15]{ClontzDualSelection} and \cite[Lem. 2.12]{CHVietoris}
    for \(\square \in \{1,\mathrm{fin}\}\).
    \begin{itemize}
        \item 
        \(\mathrm{I} \underset{\mathrm{pre}}{\not\uparrow} \mathsf{G}_\square(\mathcal{A},\mathcal{B})\)
        is equivalent to \(\mathsf{S}_\square(\mathcal A, \mathcal B)\).
        \item
        \(\mathrm{I} \underset{\mathrm{cnst}}{\not\uparrow} \mathsf{G}_\square(\mathcal{A},\mathcal{B})\)
        is equivalent to the property that, for every \(A \in \mathcal A\), there is \(B \in [A]^{\leq \omega}\)
        so that \(B \in \mathcal B\).
    \end{itemize}
    Note that the property \(\mathrm{I} \underset{\mathrm{cnst}}{\not\uparrow} \mathsf{G}_\square(\mathcal{A},\mathcal{B})\)
    is a Lindel{\"{o}}f-like principle and falls in the category of what Scheepers \cite{ScheepersNoteMat}
    refers to as \emph{Bar-Ilan selection principles}.
\end{remark}
In particular, note that, if \(\mathsf G_\square(\mathcal A, \mathcal B) \leq_{\mathrm{II}} \mathsf G_\square(\mathcal C, \mathcal D)\)
where \(\square \in \{1, \mathrm{fin}\}\), then, for any space \(X\),
\(X \models \mathsf S_\square(\mathcal A, \mathcal B) \implies X \models \mathsf S_\square(\mathcal C, \mathcal D)\).

Following \cite{KocinacSelectedResults}, we will employ the following terminology.
A space \(X\) is
\begin{itemize}
    \item 
    \emph{\(\omega\)-Lindel\"{o}f} (referred to as \(\epsilon\)-spaces in \cite{GerlitsNagy}) if every \(\omega\)-cover
    has a countable subset which is an \(\omega\)-cover; equivalently, if \(\mathrm{I} \underset{\mathrm{cnst}}{\not\uparrow}
    \mathsf G_{\mathrm{fin}}(\Omega_X,\Omega_X)\).
    \item 
    \emph{\(k\)-Lindel\"{o}f} if every \(k\)-cover has a countable subset which is a \(k\)-cover;
    equivalently, if \(\mathrm{I} \underset{\mathrm{cnst}}{\not\uparrow}
    \mathsf G_{\mathrm{fin}}(\mathcal K_X,\mathcal K_X)\).
\end{itemize}
\begin{proposition} \label{prop:SecondCountable}
    Every second-countable space is \(k\)-Lindel\"{o}f and \(\omega\)-Lindel\"{o}f.
\end{proposition}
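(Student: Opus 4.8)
The plan is to prove both halves at once by exploiting the fact that \(\Omega_X = \mathcal O_X([X]^{<\omega})\) and \(\mathcal K_X = \mathcal O_X(\mathcal A)\) where \(\mathcal A\) is the collection of compact subsets of \(X\), together with the observation that in each case the distinguished family consists of compact sets (every finite subset of \(X\) is compact). So I would first isolate the following lemma: if \(X\) is second-countable and \(\mathcal A\) is any collection of compact subsets of \(X\) that contains every singleton, then every \(\mathcal O_X(\mathcal A)\)-cover has a countable subset lying in \(\mathcal O_X(\mathcal A)\). Applying this to \(\mathcal A = [X]^{<\omega}\) yields the \(\omega\)-Lindel\"of property, and to the collection of compact sets yields the \(k\)-Lindel\"of property.

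For the construction, fix a countable base \(\mathcal B = \{B_n : n \in \omega\}\) and note that there are only countably many finite subfamilies \(\mathcal F \subseteq \mathcal B\). Given an \(\mathcal O_X(\mathcal A)\)-cover \(\mathscr U\), for each finite \(\mathcal F \subseteq \mathcal B\) such that \(\bigcup \mathcal F \subseteq U\) for some \(U \in \mathscr U\), I would select one witness \(U_{\mathcal F} \in \mathscr U\) with \(\bigcup \mathcal F \subseteq U_{\mathcal F}\); collecting these produces a countable \(\mathscr U' \subseteq \mathscr U\). To check \(\mathscr U' \in \mathcal O_X(\mathcal A)\), take any \(A \in \mathcal A\) and pick \(U \in \mathscr U\) with \(A \subseteq U\). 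Since \(\mathcal B\) is a base and \(U\) is open, the basic sets contained in \(U\) form an open cover of \(A\); compactness of \(A\) extracts a finite \(\mathcal F \subseteq \mathcal B\) with \(A \subseteq \bigcup \mathcal F \subseteq U\). Then \(U_{\mathcal F}\) was defined, and \(A \subseteq \bigcup \mathcal F \subseteq U_{\mathcal F} \in \mathscr U'\). Because every singleton belongs to \(\mathcal A\), the same verification shows \(\mathscr U'\) covers \(X\), so it is genuinely an open cover.

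The only real subtlety — and the step I would be most careful about — is the passage from a witnessing open set \(U \supseteq A\) to a finite subfamily of the base whose union still sits inside a \emph{single} member of \(\mathscr U\); this is precisely where the compactness of the members of \(\mathcal A\) is indispensable, and it is exactly what allows the finite-set case (\(\omega\)-covers) and the compact-set case (\(k\)-covers) to be dispatched by one and the same argument. Everything else is routine bookkeeping with the countable base.
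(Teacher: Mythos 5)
Your argument is correct and is essentially the paper's own proof: both reduce the cover to countably many witnesses indexed by finite subfamilies of a countable base (equivalently, by finite unions of basic sets), using compactness of the sets in \(\mathcal A\) to squeeze \(A \subseteq \bigcup \mathcal F \subseteq U\) for a single \(U \in \mathscr U\). The only cosmetic difference is that you package the two cases in one general lemma about \(\mathcal O_X(\mathcal A)\) for families of compact sets containing the singletons, whereas the paper writes out the \(k\)-cover case and notes the \(\omega\)-cover case is identical in form.
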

\begin{proof}
    Let \(\mathscr B_0\) be a countable basis for a space \(X\) and let
    \[\mathscr B = \left\{ \bigcup \mathscr F : \mathscr F \in \left[\mathscr B_0\right]^{<\omega} \right\}.\]
    Notice that \(\mathscr B\) is also countable.
    Since we will establish with Theorem \ref{thm:KImpliesOmegaImpliesOpen} that every \(k\)-Lindel\"{o}f
    space is \(\omega\)-Lindel\"{o}f, we prove here that \(X\) is \(k\)-Lindel\"{o}f,
    though the direct proof of \(\omega\)-Lindel\"{o}fness in this case is identical in form to what follows.
    
    So let \(\mathscr U\) be a \(k\)-cover of \(X\), and, for each compact \(K \subseteq X\), let \(U \in \mathscr U\)
    be such that \(K \subseteq U\).
    We can then find \(\mathscr F_K \in \left[ \mathscr B_0 \right]^{<\omega}\) such that \(K \subseteq \bigcup \mathscr F_K \subseteq U\).
    Note then that \(\bigcup \mathscr F_K \in \mathscr B\).
    Now, \[\mathscr V := \left\{ \bigcup \mathscr F_K : K \subseteq X \text{ is compact} \right\} \subseteq \mathscr B,\]
    and is thus countable.
    Make a choice \(U_V \in \mathscr U\) for each \(V \in \mathscr V\) with \(V \subseteq U_V\)
    and note that \(\{ U_V : V \in \mathscr V \}\) is the desired countable subset of \(\mathscr U\).
\end{proof}

We recall some P1 strategy reduction theorems,
the first of which are the celebrated theorems of Hurewicz \cite{HurewiczOriginal}
and Pawlikowski \cite{Pawlikowski}.
For more on the proofs of the theorems of Hurewicz and Pawlikowski, we refer the reader to \cite{ConceptualProofs};
for a pointless (that is, lattice-theoretic) approach, see \cite{Mezabarba}.

\begin{theorem}[{Hurewicz \cite{HurewiczOriginal}/Pawlikowski \cite{Pawlikowski}}]
    For \(\square \in \{1 , \mathrm{fin} \}\),
    \[\mathrm{I} \uparrow \mathsf G_\square(\mathcal O, \mathcal O)
    \iff \mathrm{I} \underset{\mathrm{pre}}{\uparrow} \mathsf G_\square(\mathcal O, \mathcal O).\]
\end{theorem}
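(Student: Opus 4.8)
The reverse implication is immediate: a predetermined strategy is in particular a strategy, so a winning predetermined strategy for P1 already witnesses \(\mathrm{I}\uparrow\mathsf G_\square(\mathcal O,\mathcal O)\). The whole content is therefore the forward implication, and I would attack it through its contrapositive together with Remark~\ref{remark:LindelofAndSelection}. Since \(\mathrm{I}\underset{\mathrm{pre}}{\not\uparrow}\mathsf G_\square(\mathcal O,\mathcal O)\) is equivalent to \(\mathsf S_\square(\mathcal O,\mathcal O)\), the assertion \(\mathrm{I}\uparrow\mathsf G_\square(\mathcal O,\mathcal O)\Rightarrow\mathrm{I}\underset{\mathrm{pre}}{\uparrow}\mathsf G_\square(\mathcal O,\mathcal O)\) is exactly ``\(X\models\mathsf S_\square(\mathcal O,\mathcal O)\Rightarrow\mathrm{I}\not\uparrow\mathsf G_\square(\mathcal O,\mathcal O)\).'' So I would assume \(X\) is Menger (for \(\square=\mathrm{fin}\)) or Rothberger (for \(\square=1\)), fix an arbitrary strategy \(\sigma\) for P1, and produce a single play of P2 defeating it. A useful preliminary reduction: both \(\mathsf S_{\mathrm{fin}}(\mathcal O,\mathcal O)\) and \(\mathsf S_1(\mathcal O,\mathcal O)\) imply \(X\) is Lindel\"of (apply the principle to a constant sequence of covers, yielding a countable subcover), so I may replace each cover \(\sigma\) produces by a countable subcover and work with countable covers throughout.

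Next I would unfold \(\sigma\) into its strategy tree indexed by \(\omega^{<\omega}\). To the root I attach \(\mathcal U_{\langle\rangle}\), a countable subcover of \(\sigma(\langle\rangle)\), and enumerate its relevant responses. Recursively, having attached a countable cover \(\mathcal U_s\) at node \(s\) and enumerated P2's legal responses to it (finite subsets, in the \(\mathrm{fin}\) case; single members, in the \(1\) case), I let the \(n\)-th child \(s\concat n\) record P2's \(n\)-th such response and attach \(\mathcal U_{s\concat n}\), a countable subcover of the cover \(\sigma\) returns after that partial play. Every branch of this tree is then a genuine run against \(\sigma\), and every possible response of P2 drawn from the countable subcovers occurs at some child. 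The plan is to feed the countable family \(\{\mathcal U_s : s\in\omega^{<\omega}\}\), under a fixed enumeration, into \(\mathsf S_{\mathrm{fin}}\) (resp.\ \(\mathsf S_1\)) to obtain selections from these covers whose union is an open cover of \(X\).

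The main obstacle is precisely that the selections returned by the selection principle live at nodes scattered across the tree, whereas a legal run of the game — the only thing that contradicts \(\sigma\) being winning — must follow a single branch. Bridging this gap is the heart of the Hurewicz/Pawlikowski argument: I would construct the defeating branch by a fusion in which, at each stage, I commit to one more of P2's moves and invoke the selection property on the tail of covers reachable from the current node, arranging the bookkeeping so that the responses accumulated along that one branch eventually cover \(X\). Getting this recursion to terminate in a genuine cover — rather than merely covering more and more of \(X\) — is the delicate point, and is where the topological strength of the Menger/Rothberger hypothesis, beyond mere Lindel\"ofness, is actually consumed.

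Finally, I would note that the two cases \(\square\in\{1,\mathrm{fin}\}\) run in exact parallel, the only difference being whether P2's tree-responses and the selection outputs are single open sets or finite families of them, so a single write-up handles both. Alternatively, as this is a classical result, one may simply cite Hurewicz~\cite{HurewiczOriginal} and Pawlikowski~\cite{Pawlikowski} (or the streamlined treatments in \cite{ConceptualProofs,Mezabarba}) and record the stated equivalence through Remark~\ref{remark:LindelofAndSelection} as above.
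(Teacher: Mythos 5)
The paper offers no proof of this statement: it is recorded as a classical theorem of Hurewicz \cite{HurewiczOriginal} (the \(\mathrm{fin}\) case) and Pawlikowski \cite{Pawlikowski} (the \(1\) case), with pointers to \cite{ConceptualProofs} and \cite{Mezabarba} for modern treatments. Your closing alternative --- cite the originals and translate the statement through Remark \ref{remark:LindelofAndSelection} --- is therefore exactly what the paper does, and your preliminary reductions (the easy direction, the passage to the contrapositive \(\mathsf S_\square(\mathcal O,\mathcal O)\Rightarrow\mathrm{I}\not\uparrow\mathsf G_\square(\mathcal O,\mathcal O)\), the Lindel\"{o}f reduction to countable covers, and the unfolding of \(\sigma\) into a tree indexed by \(\omega^{<\omega}\)) are all correct and are the standard opening moves of every published proof.

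As a self-contained proof, however, your write-up has a genuine gap, and you have located it yourself: everything from ``the main obstacle is precisely\ldots'' onward is a description of what a proof would have to accomplish, not an argument. The ``fusion'' you gesture at is not routine. Feeding the countable family \(\{\mathcal U_s : s\in\omega^{<\omega}\}\) into \(\mathsf S_\square(\mathcal O,\mathcal O)\) and then trying to steer the scattered selections onto a single branch is precisely the naive attack that does not close; in the \(\square=1\) case this was an open question of Galvin for roughly fifteen years before Pawlikowski resolved it, and his argument (see also the streamlined version in \cite{ConceptualProofs}) hinges on first upgrading \(\mathsf S_1(\mathcal O,\mathcal O)\) to a formally stronger selection property that is uniform enough to be applied along every branch simultaneously --- that upgrade is where all of the work lies, and nothing in your sketch supplies it. The \(\mathrm{fin}\) case is more accessible but still requires Hurewicz's actual argument rather than bookkeeping. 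So either carry out that core step explicitly or, as the paper does, simply cite the theorem; sketching the setup and asserting that the recursion ``can be arranged'' leaves the decisive point unproved.
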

\begin{theorem}[{Scheepers \cite{ScheepersIII}}]
    For \(\square \in \{1 , \mathrm{fin} \}\),
    \[\mathrm{I} \uparrow \mathsf G_\square(\Omega, \Omega)
    \iff \mathrm{I} \underset{\mathrm{pre}}{\uparrow} \mathsf G_\square(\Omega, \Omega).\]
\end{theorem}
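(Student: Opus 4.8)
The backward implication is immediate: a predetermined strategy is in particular a strategy, so \(\mathrm{I}\underset{\mathrm{pre}}{\uparrow}\mathsf G_\square(\Omega,\Omega)\) trivially yields \(\mathrm{I}\uparrow\mathsf G_\square(\Omega,\Omega)\). For the forward implication I would argue the contrapositive, first translating it through Remark \ref{remark:LindelofAndSelection}. Since \(\mathrm{I}\underset{\mathrm{pre}}{\not\uparrow}\mathsf G_\square(\Omega,\Omega)\) is precisely the selection principle \(\mathsf S_\square(\Omega,\Omega)\), the content of the theorem reduces to establishing
\[ X\models\mathsf S_\square(\Omega_X,\Omega_X)\ \Longrightarrow\ \mathrm{I}\not\uparrow\mathsf G_\square(\Omega_X,\Omega_X). \]
In other words, assuming the \(\omega\)-selection principle, I must defeat an arbitrary strategy \(\sigma\) for P1.

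The plan is to re-run the Hurewicz (for \(\square=\mathrm{fin}\)) and Pawlikowski (for \(\square=1\)) arguments almost verbatim, observing that the only structural feature of \(\mathcal O\) those proofs exploit is that \(\mathcal O\) is directed under finite \emph{intersection-refinement}, a property \(\Omega\) shares. Concretely, I would first record the one-line fact that if \(\mathscr W,\mathscr W'\in\Omega_X\) then \(\{W\cap W':W\in\mathscr W,\ W'\in\mathscr W'\}\in\Omega_X\): a finite \(F\subseteq X\) lies inside some \(W\supseteq F\) and some \(W'\supseteq F\), hence inside \(W\cap W'\). This is exactly the closure the open-cover proofs use when they combine the covers produced by \(\sigma\) along different partial plays into a single cover, and it is the step where the definition of an \(\omega\)-cover (covering finite sets) does all the work. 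Because this closure holds, every step of the original arguments survives replacing ``open cover'' by ``\(\omega\)-cover''; equivalently, I would invoke the abstract form of the Hurewicz/Pawlikowski theorems (as developed in \cite{ConceptualProofs}), whose hypothesis is precisely this refinement-directedness.

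The construction itself is a tree recursion. Indexing partial plays by \(\omega^{<\omega}\), I would build for each node \(s\) a legal partial play against \(\sigma\) whose next P1-move is the \(\omega\)-cover \(\mathscr W_s=\sigma(\cdots)\); at each level I use intersection-refinement to fold the countably many covers \(\{\mathscr W_s:|s|=n\}\) into a single sequence of \(\omega\)-covers to which \(\mathsf S_\square(\Omega_X,\Omega_X)\) applies, then thread the resulting selections back through \(\sigma\) along one branch so that they form a genuine play whose selections constitute an \(\omega\)-cover. The main obstacle is exactly this threading: a naive application of \(\mathsf S_\square\) to the covers at all nodes produces selections scattered across incompatible branches, and the delicate bookkeeping that forces them onto a common branch while keeping the accumulated selections an \(\omega\)-cover is the technical heart inherited from Pawlikowski. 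I would deliberately avoid routing the proof through the finite-power reduction to \(\bigoplus_n X^n\) together with the preceding theorem, since \(\omega\)-covers of \(X\) correspond only to the special ``box'' covers arising from \(W\mapsto\bigsqcup_n W^n\) rather than to arbitrary open covers of \(\bigoplus_n X^n\); that route reintroduces precisely the matching problem the direct replay sidesteps.
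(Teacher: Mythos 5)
This theorem is stated in the paper with a citation to Scheepers \cite{ScheepersIII} and no proof is given, so there is nothing in-paper to compare against; I will assess your argument on its own terms. Your framing is right: the backward direction is trivial, and via Remark \ref{remark:LindelofAndSelection} the content is exactly \(\mathsf S_\square(\Omega,\Omega)\implies \mathrm{I}\not\uparrow\mathsf G_\square(\Omega,\Omega)\). The intersection-refinement observation (\(\{W\cap W':W\in\mathscr W,\ W'\in\mathscr W'\}\in\Omega_X\) for \(\mathscr W,\mathscr W'\in\Omega_X\)) is also correct. The gap is that everything after that is an assertion rather than a proof: you state that "every step of the original arguments survives" and then describe the tree recursion only far enough to identify "the delicate bookkeeping that forces [the selections] onto a common branch" as "the technical heart inherited from Pawlikowski" --- and you do not carry that heart out. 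For a theorem whose entire content is that bookkeeping, this is the proof, not a detail. Moreover, the claim that refinement-directedness is the \emph{only} structural feature used needs verification at several specific points: the reduction to countable moves for ONE now requires \(\omega\)-Lindel\"{o}fness (which does follow from \(\mathsf S_\square(\Omega,\Omega)\), but must be said); and in the \(\mathrm{fin}\) case the winning condition changes from "every point lies in some selected set" to "every finite set lies in a \emph{single} selected set," so the steps in Hurewicz's argument that absorb points into unions of finitely many selections do not transfer word-for-word and must be re-examined against finite sets.

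Your stated reason for rejecting the finite-power route is also incorrect, and this matters because that route is the standard proof (and essentially Scheepers' own). To defeat a strategy \(\sigma\) for ONE in \(\mathsf G_\square(\Omega_X,\Omega_X)\), translate it into a strategy \(\tau\) for ONE in \(\mathsf G_\square(\mathcal O_Y,\mathcal O_Y)\) on \(Y=\bigoplus_{n\geq 1}X^n\), where \(\tau\) plays \(\{\bigsqcup_n W^n: W\in\mathscr W\}\) whenever \(\sigma\) plays \(\mathscr W\); this is an open cover of \(Y\) precisely because \(\mathscr W\) is an \(\omega\)-cover of \(X\), and TWO's responses decode to elements of \(\mathscr W\) whose union of selections is an \(\omega\)-cover of \(X\) exactly when the decoded selections cover \(Y\). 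Since \(\mathsf S_\square(\Omega_X,\Omega_X)\) makes every finite power of \(X\) Menger/Rothberger (Theorem \ref{thm:OmegaSelectionChar}), \(Y\) is Menger/Rothberger, so Hurewicz/Pawlikowski defeats \(\tau\), hence \(\sigma\). The fact that \(\tau\) plays only "box" covers is harmless: the Hurewicz/Pawlikowski theorems defeat \emph{every} strategy for ONE on \(Y\), in particular this one; no matching between arbitrary covers of \(Y\) and \(\omega\)-covers of \(X\) is ever needed. I would recommend either writing out the direct replay in full (checking the points above) or adopting this translation argument, which requires no new bookkeeping at all.
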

\begin{theorem}[{Caruvana {\&} Holshouser \cite{CHVietoris}}]
    For \(\square \in \{1 , \mathrm{fin} \}\),
    \[\mathrm{I} \uparrow \mathsf G_\square(\mathcal K, \mathcal K)
    \iff \mathrm{I} \underset{\mathrm{pre}}{\uparrow} \mathsf G_\square(\mathcal K, \mathcal K).\]
\end{theorem}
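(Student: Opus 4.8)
The $\Leftarrow$ direction is immediate, since a predetermined strategy is in particular a strategy; the content is the forward implication, which I would establish by extracting a winning predetermined strategy from an arbitrary winning strategy \(\sigma\) for P1. The whole approach rests on one space-specific observation, the \(k\)-cover analogue of the fact Scheepers exploits for \(\Omega\) in \cite{ScheepersIII}: \emph{\(\mathcal K_X\) is closed under finite intersections and under enlargement of members.} Concretely, if \(\mathcal V_0,\dots,\mathcal V_m\in\mathcal K_X\), then \(\{V_0\cap\dots\cap V_m : V_i\in\mathcal V_i\}\in\mathcal K_X\): given compact \(K\), choose \(V_i\in\mathcal V_i\) with \(K\subseteq V_i\) and note that \(\bigcap_i V_i\supseteq K\) is open, being a \emph{finite} intersection of open sets. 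Likewise, if \(\{W_j\}_j\) is a \(k\)-cover and \(W_j\subseteq W_j'\) with each \(W_j'\) open, then \(\{W_j'\}_j\) is again a \(k\)-cover. The restriction to finite intersections is essential, since countable intersections of open sets need not be open, and this is precisely what will force an interleaving in the construction below. It is also what distinguishes \(\mathcal K\) (and \(\Omega\)) from \(\mathcal O_X\), which lacks this closure and so requires the separate Hurewicz--Pawlikowski method. The same scheme handles both \(\square=1\) and \(\square=\mathrm{fin}\), the single-selection case being the simpler one.

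The plan is to index the relevant partial plays against \(\sigma\) by the countable tree \(\omega^{<\omega}\) and then fold these into one predetermined sequence. First I would fix, at each node, an interpretation of the successor coordinates as legal responses of P2 to \(\sigma\)'s move there, so that each branch \(g\in\omega^\omega\) is read as an honest run in which P2 plays against \(\sigma\); write \(\mathcal A_t\in\mathcal K_X\) for \(\sigma\)'s response along the initial segment coded by \(t\). Next I would fix a bijection \(b:\omega\to\omega^{<\omega}\) enumerating nodes by length and then lexicographically, and for each \(n\) set \(F_n=\{t\in\omega^{<\omega}:b^{-1}(t)\leq n\}\), a finite set increasing to all of \(\omega^{<\omega}\). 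The predetermined strategy is then
\[
\tau(n)=\left\{\bigcap_{t\in F_n}A_t : A_t\in\mathcal A_t \text{ for all } t\in F_n\right\},
\]
which lies in \(\mathcal K_X\) by the finite-intersection closure. The design ensures that every member of \(\tau(n)\) is simultaneously a subset of a member of \(\mathcal A_t\) for each \(t\in F_n\). To verify that \(\tau\) is winning, I would decode an arbitrary P2-play against \(\tau\): reading P2's selections through the pre-assigned interpretations produces a single branch \(g\) together with an honest \(\sigma\)-run along \(g\) whose level-\(k\) selection is a finite subset of \(\mathcal A_{g\upharpoonright k}\) enlarging the sets P2 actually chose. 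Since \(\sigma\) is winning, that \(\sigma\)-run's selections do not form a \(k\)-cover; by the enlargement-closure above, neither does the smaller family P2 produced against \(\tau\), so \(\tau\) is winning.

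The hard part will be the bookkeeping that makes the decoding honest: one must arrange the enumeration \(b\) and the active sets \(F_n\) so that, for the branch \(g\) extracted from P2's choices, every round's selection is dominated at the correct level of the \(\sigma\)-run, i.e. so that \(g\upharpoonright k\in F_n\) whenever round \(n\) is routed to level \(k\). This is the technical core. Logically prior to it is the observation that P2's response space at a node is a priori uncountable (finite subsets of a possibly uncountable \(k\)-cover), which must be organized into the countable successor-coordinates of \(\omega^{<\omega}\); it is exactly the finite-intersection closure (preserving openness) that permits amalgamating only finitely many nodes per round while still exhausting the tree. I would carry out both points as in Scheepers' treatment of \(\Omega\) \cite{ScheepersIII} and the general framework of \cite{ConceptualProofs}, substituting the \(k\)-cover closure facts of the first paragraph for their \(\omega\)-cover counterparts.
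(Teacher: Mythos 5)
First, a point of comparison: the paper does not prove this statement at all --- it is quoted as a known strategy-reduction theorem and attributed to \cite{CHVietoris} --- so your attempt can only be measured against the cited source and against correctness in its own right. Your easy direction is fine, and your two closure observations about \(\mathcal K_X\) (stability under finite common refinements and under enlargement of members) are correct and are indeed the right raw materials. The difficulty is that everything you label ``the hard part'' and ``the technical core'' is the entire content of the theorem, and the outline you give for that core does not survive scrutiny. Concretely: the tree \(\langle \mathcal A_t : t \in \omega^{<\omega}\rangle\) is not well defined as described, because \(\sigma\)'s move at a history depends on P2's actual earlier selections, which range over a typically uncountable set; ``fixing an interpretation of the successor coordinates as legal responses'' builds only a countable subtree of the game tree, and the response induced at a node \(t \in F_n\) by P2's actual choice from the common refinement \(\tau(n)\) is an arbitrary (finite subset of an) element of \(\mathcal A_t\), not one of your pre-enumerated ones, so the decoding exits your tree at the very first level. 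This is repairable, but not by the finite-intersection closure: one must first dispose of the non-\(k\)-Lindel\"{o}f case (where P1 already has a constant, hence predetermined, winning strategy) and then replace each move of \(\sigma\) by a countable \(k\)-subcover so that the game tree genuinely becomes countably branching.

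Even after that repair, the decoding has to dominate \emph{every} round's selection, not merely the rounds routed to one branch: to conclude that \(\{W_n : n \in \omega\}\) is not a \(k\)-cover you need one compact set avoiding all of the \(W_n\), while the winningness of \(\sigma\) only controls the one-selection-per-level record of a single honest run. Arranging a branch \(g\) so that each round \(n\) is absorbed at a level \(k\) with \(g \upharpoonright k \in F_n\) meets a genuine circularity: \(F_n\) must be finite (otherwise \(\tau(n)\) is not an open cover, as you note), but whether \(g \upharpoonright k \in F_n\) depends on the codes \(g(0), \dots, g(k-1)\), which are themselves determined by which rounds were absorbed at earlier levels. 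For \(\square = \mathrm{fin}\) this can be untangled because a level may absorb a finite block of rounds; for \(\square = 1\) each level absorbs exactly one round and the orphaned rounds cannot be recovered --- this is exactly why the single-selection reductions (Pawlikowski, and Scheepers for \(\Omega\)) are the hard ones, so your remark that the single-selection case is ``the simpler one'' is a warning sign rather than a simplification. As it stands the proposal is a plan that delegates the decisive combinatorics to \cite{ScheepersIII} and \cite{ConceptualProofs}; it is not yet a proof of the statement.
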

With these results, the properties considered in this paper ``collapse'' for P1 and we need only have terminology
for the corresponding selection principles.
So, following the notation above, we will say a space \(X\) is
\begin{itemize}
    \item 
    \emph{\(\omega\)-Menger} (resp. \emph{\(\omega\)-Rothberger}) if \(X \models \mathsf S_{\mathrm{fin}}(\Omega, \Omega)\)
    (resp. \(X \models \mathsf S_1(\Omega,\Omega)\)).
    \item 
    \emph{\(k\)-Menger} (resp. \emph{\(k\)-Rothberger}) if \(X \models \mathsf S_{\mathrm{fin}}(\mathcal K, \mathcal K)\)
    (resp. \(X \models \mathsf S_1(\mathcal K,\mathcal K)\)).
\end{itemize}

We will, however, have distinguishing terminology for the situation on P2's side of things.
\begin{definition}
    For the properties \(\mathcal P\) discussed above, we will say that \(X\) is \emph{strategically \(\mathcal P\)}
    if P2 has a winning strategy in the game corresponding to \(\mathcal P\).
    Analogously, we will say a space is \emph{Markov} \(\mathcal P\) if P2 has a winning Markov strategy
    in the corresponding game to \(\mathcal P\).
\end{definition}

\section{General Results} \label{sec:GeneralResults}

\subsection{Characterizations using finite powers} \label{subsection:FinitePowers}

An important characterization of the \(\omega\)-variants is found in finite powers.
We will collect these characterizations in each of their strategic levels.
\begin{theorem}[{\cite[p. 156]{GerlitsNagy}}] \label{thm:GerlitsNagy}
    A space \(X\) is \(\omega\)-Lindel\"{o}f if and only if each of its finite powers is Lindel\"{o}f.
\end{theorem}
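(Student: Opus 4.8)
The statement is a biconditional, so the plan is to treat the two directions separately; I expect one to be essentially formal and the other to carry all the content. For the direction ``all finite powers Lindel\"{o}f \(\Rightarrow\) \(\omega\)-Lindel\"{o}f'', given an \(\omega\)-cover \(\mathscr U\) of \(X\), I would observe that for each \(n \geq 1\) the family \(\{U^n : U \in \mathscr U\}\) is an open cover of \(X^n\) (the coordinates of any tuple form a finite subset of \(X\), hence lie in a common member of \(\mathscr U\)). Lindel\"{o}fness of \(X^n\) then yields a countable \(\mathscr U_n \subseteq \mathscr U\) with \(\{U^n : U \in \mathscr U_n\}\) covering \(X^n\), and \(\bigcup_n \mathscr U_n\) is a countable subfamily of \(\mathscr U\) that is again an \(\omega\)-cover, since any finite \(F\) with \(|F| = m\) sits inside some \(U \in \mathscr U_m\). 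I expect this direction to be routine.

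For ``\(\omega\)-Lindel\"{o}f \(\Rightarrow\) \(X^n\) Lindel\"{o}f'', I would fix \(n\) and an open cover \(\mathcal G\) of \(X^n\). The key device is to manufacture an \(\omega\)-cover of \(X\) out of \(\mathcal G\), apply \(\omega\)-Lindel\"{o}fness, and pull back. First I would close \(\mathcal G\) under finite unions to obtain \(\mathcal G'\) (still covering \(X^n\)), and set \(\mathscr U = \{U \in \mathscr T_X : U^n \subseteq G' \text{ for some } G' \in \mathcal G'\}\). The plan is to show \(\mathscr U \in \Omega_X\); then a countable \(\omega\)-subcover \(\mathscr V \subseteq \mathscr U\), together with a choice \(U \mapsto G'_U \in \mathcal G'\) satisfying \(U^n \subseteq G'_U\), produces a countable family \(\{G'_U : U \in \mathscr V\}\) covering \(X^n\) (any \(\bar x\) has \(\{x_1, \ldots, x_n\} \subseteq U\) for some \(U \in \mathscr V\), so \(\bar x \in U^n \subseteq G'_U\)); unraveling each \(G'_U\) into the finitely many members of \(\mathcal G\) comprising it then gives the desired countable subcover of \(\mathcal G\).

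The hard part will be verifying that \(\mathscr U\) is an \(\omega\)-cover: given finite \(F \subseteq X\), the finite set \(F^n \subseteq X^n\) meets finitely many members of \(\mathcal G\), whose union is a single \(G' \in \mathcal G'\) containing \(F^n\), but I must fatten \(F\) to an open \(U \supseteq F\) whose full cube \(U^n\) still lies inside \(G'\). I would accomplish this by choosing, for each point \(\bar p \in F^n\), a basic box \(\prod_j O_j^{\bar p} \subseteq G'\) around it, and then, for each \(a \in F\), intersecting the finitely many coordinate-sides \(O_j^{\bar p}\) that ``see'' \(a\) (those with \(p_j = a\)) to get an open \(W_a \ni a\); setting \(U = \bigcup_{a \in F} W_a\) should force \(U^n \subseteq G'\), since any tuple \((u_1, \ldots, u_n) \in U^n\) with \(u_j \in W_{a_j}\) lands in the box indexed by \((a_1, \ldots, a_n) \in F^n\). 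This finite-intersection fattening is the crux of the argument, and closing \(\mathcal G\) under finite unions beforehand is exactly what allows a single \(G'\) to absorb the whole cube.
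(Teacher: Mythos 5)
Your proof is correct and is essentially the classical Gerlits--Nagy argument; the paper itself offers no proof of this theorem, deferring to the citation \cite{GerlitsNagy}. Both directions check out, including the crux of the hard direction: the finite-intersection fattening does force \(U^n \subseteq G'\), since any \((u_1,\ldots,u_n) \in U^n\) with \(u_j \in W_{a_j}\) lies in the basic box chosen for the tuple \((a_1,\ldots,a_n) \in F^n\), each \(W_{a_j}\) having been intersected with the \(j\)-th side of that box.
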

\begin{theorem} \label{thm:OmegaSelectionChar}
    For any space \(X\),
    \begin{enumerate}[label=(\arabic*),ref=(\arabic*)]
        \item \label{OmegaSelectionMengerChar}
        \(X\) is \(\omega\)-Menger if and only if each of its finite powers is Menger (\cite[Thm. 3.9]{CombOpenCovers2}).
        \item \label{OmegaSelectionRothbergerChar}
        \(X\) is \(\omega\)-Rothberger if and only if each of its finite powers is Rothberger (\cite[p. 918]{Sakai1988}).
    \end{enumerate}
\end{theorem}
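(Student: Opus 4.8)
The plan is to prove both equivalences at once by building a dictionary between \(\omega\)-covers of \(X\) and open covers of the finite powers \(X^n\), and then transporting the finite- or single-selection game across that dictionary; this mirrors the proof of Theorem \ref{thm:GerlitsNagy}. Everything rests on two bridging observations. The easy one is that if \(\mathscr U \in \Omega_X\), then \(\{U^n : U \in \mathscr U\}\) is an \(\omega\)-cover of \(X^n\): given finitely many tuples, gather all of their coordinates into one finite \(F \subseteq X\), take \(U \in \mathscr U\) with \(F \subseteq U\), and note that each of those tuples lies in \(U^n\). The hard one, which I would isolate as a lemma, is the converse passage: if \(\mathscr W \in \Omega_{X^n}\), then
\[
    \mathscr U := \{ U \in \mathscr T_X : U^n \subseteq W \text{ for some } W \in \mathscr W \} \in \Omega_X.
\]

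For the ``if'' direction (all finite powers Menger, resp. Rothberger, implies the \(\omega\)-property), I would start from a sequence \(\langle \mathscr U_m \rangle_{m \in \omega}\) of \(\omega\)-covers of \(X\) and fix a partition of \(\omega\) into infinitely many infinite pieces \(\langle I_k \rangle_{k \geq 1}\). On the block \(I_k\), the families \(\{U^k : U \in \mathscr U_m\}\) (for \(m \in I_k\)) are open covers of \(X^k\) by the easy observation, so Mengerness (resp. Rothbergerness) of \(X^k\) yields finite (resp. single) selections that I pull back to finite (resp. single) subfamilies \(\mathscr F_m \subseteq \mathscr U_m\) with \(\{U^k : U \in \bigcup_{m \in I_k} \mathscr F_m\}\) covering \(X^k\). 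Then \(\bigcup_m \mathscr F_m\) is an \(\omega\)-cover: any finite \(F \subseteq X\) with \(|F| = k\), read as a tuple of \(X^k\), lands in some \(U^k\) with \(U\) selected on \(I_k\), whence \(F \subseteq U\).

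For the ``only if'' direction (the \(\omega\)-property implies each finite power is Menger, resp. Rothberger), I would fix \(n\) and a sequence \(\langle \mathscr W_m \rangle\) of open covers of \(X^n\) and convert them into \(\omega\)-covers of \(X\). In the Menger case, replace each \(\mathscr W_m\) by its closure under finite unions, which is an \(\omega\)-cover of \(X^n\), apply the hard lemma to obtain \(\omega\)-covers \(\mathscr U_m\) of \(X\), run \(\omega\)-Mengerness of \(X\), and push each selected \(U\) forward through the witnessing inclusion \(U^n \subseteq W\) to finitely many members of \(\mathscr W_m\); the easy observation shows the resulting selection covers \(X^n\). The single-selection (Rothberger) case needs more care, since the finite-union closure would convert one selected set into several members of \(\mathscr W_m\); there I would instead manufacture the required \(\omega\)-covers of \(X^n\) by a diagonalization along a partition of \(\omega\), arranged so that the final pullback remains a single member of each \(\mathscr W_m\).

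I expect the hard lemma to be the main obstacle. Its proof is a coordinate-matching argument: given finite \(F \subseteq X\), the finite set of all \(n\)-tuples over \(F\) lies in a single \(W \in \mathscr W\); choosing for each such tuple \(\bar y\) a basic box \(\prod_{i<n} B_{\bar y, i} \subseteq W\) through \(\bar y\), setting \(U_x = \bigcap \{ B_{\bar y, i} : \bar y_i = x \}\) for \(x \in F\), and taking \(U = \bigcup_{x \in F} U_x\), one gets \(F \subseteq U\), and a check that any tuple in \(U^n\) matches coordinatewise with some admissible \(\bar y\) gives \(U^n \subseteq W\). The delicate points are precisely this coordinatewise matching and, in the Rothberger direction, the bookkeeping that keeps selections single while still extracting honest \(\omega\)-covers from arbitrary open covers of \(X^n\).
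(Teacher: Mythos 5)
The paper does not prove this theorem; both parts are quoted from the literature (Just--Miller--Scheepers--Szeptycki for the Menger case, Sakai for the Rothberger case), so there is no internal proof to compare against. Judged on its own terms, your outline reproduces the standard arguments and is essentially sound: the ``easy'' observation and the ``hard'' lemma are both correct, and your coordinate-matching proof of the latter (the boxes \(B_{\bar y,i}\), the sets \(U_x=\bigcap\{B_{\bar y,i}:\bar y_i=x\}\), and the verification that \(U^n\subseteq W\)) is exactly right. The ``if'' direction via a partition of \(\omega\) into infinite blocks \(I_k\) is complete for both selection modes, and the Menger ``only if'' direction via finite-union closures is complete as well.

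The one genuine soft spot is the Rothberger ``only if'' direction, where ``a diagonalization along a partition of \(\omega\)'' is named but not executed, and this is precisely the nontrivial step (note that \(\mathsf S_1(\Omega,\Omega)\Rightarrow\mathsf S_1(\mathcal O,\mathcal O)\), the case \(n=1\), is already part of what must be proved, so you cannot lean on it). The partition must be into infinitely many \emph{infinite} sets \(\{B_i:i\in\omega\}\), and from each block you build a single \(\omega\)-cover of \(X\), namely \(\mathscr W_i=\{V\in\mathscr T_X: V^n\subseteq\bigcup_{m\in S}W_m\ \text{for some finite }S\subseteq B_i\ \text{and}\ W_m\in\mathscr W_m\}\). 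This \emph{is} an \(\omega\)-cover: for finite \(F\subseteq X\), the finite set \(F^n\) can be covered using one set from each of \(|F|^n\) \emph{distinct} covers \(\mathscr W_m\) with \(m\in B_i\) (possible because \(B_i\) is infinite and each \(\mathscr W_m\) covers \(X^n\)), and your hard lemma then supplies \(V\supseteq F\) with \(V^n\) inside that finite union. Applying \(\mathsf S_1(\Omega_X,\Omega_X)\) to \(\langle\mathscr W_i\rangle\) yields \(V_i\in\mathscr W_i\) forming an \(\omega\)-cover of \(X\); each \(V_i\) carries a witness set \(S_i\subseteq B_i\) with one chosen \(W_m\in\mathscr W_m\) per \(m\in S_i\), and since the \(B_i\) are pairwise disjoint, so are the \(S_i\) --- this disjointness is what keeps the final selection single-valued. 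Any \(\bar x\in X^n\) has coordinate set \(F\subseteq V_i\) for some \(i\), whence \(\bar x\in V_i^n\subseteq\bigcup_{m\in S_i}W_m\). With that step filled in, your proposal is a correct and complete proof of both parts.
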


To characterize the ``strategically \(\mathcal P\)'' level without any separation axiom assumptions,
we recall that being strategically Rothberger (resp. Menger) is finitely productive.
The single-selection version of the equivalence referred to here will appear here as Theorem \ref{thm:StrategicOmega}\ref{StrategicRothChar}
and was originally proved in \cite{ClontzHolshouser} by passing through a related game on \(C_p(X)\)
under the assumption that \(X\) is Tychonoff.
We will see that one direction of the general equivalence will be the result of Theorem \ref{thm:DiasScheepers} and
a bijection \(\omega^2 \to \omega\).

\begin{theorem} \label{thm:DiasScheepers}
    Let \(X\) and \(Y\) be spaces.
    \begin{enumerate}[label=(\arabic*),ref=(\arabic*)]
        \item \label{DiasScheepersMenger}
        If \(X\) and \(Y\) are strategically Menger, then \(X \times Y\) is strategically Menger (\cite[Prop. 5.1]{DiasScheepers}).
        \item \label{DiasScheepersRothberger}
        If \(X\) and \(Y\) are strategically Rothberger, then \(X \times Y\) is strategically Rothberger (\cite[Cor. 4.9]{DiasScheepers}).
    \end{enumerate}
\end{theorem}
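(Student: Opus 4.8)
The plan is to build, from winning strategies for P2 in the two factor games, a winning strategy for P2 in the game on the product, and to treat the two parts in parallel: for \ref{DiasScheepersMenger} we work with \(\mathsf G_{\mathrm{fin}}\) and finite selections, while for \ref{DiasScheepersRothberger} we work with \(\mathsf G_1\) and single selections, the two arguments being formally identical once ``finite subset'' is replaced by ``single element.'' So fix winning strategies \(\sigma\) and \(\tau\) for P2 in the Menger (resp.\ Rothberger) game on \(X\) and on \(Y\); the goal is to produce a winning strategy \(\rho\) for P2 in the corresponding game on \(X\times Y\).

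First I would reduce to the case in which P1 only ever plays covers by open boxes. Given P1's actual cover \(\mathcal W_n\) of \(X\times Y\), let \(\mathcal B_n=\{U\times V : U\in\mathscr T_X,\ V\in\mathscr T_Y,\ U\times V\subseteq W\ \text{for some}\ W\in\mathcal W_n\}\); this is an open cover of \(X\times Y\) refining \(\mathcal W_n\). If an auxiliary strategy selects boxes from the \(\mathcal B_n\) whose union covers \(X\times Y\), then replacing each selected box \(B\) by some \(W\in\mathcal W_n\) with \(B\subseteq W\) yields a legal play against \(\mathcal W_n\) whose selections still cover. Since \(\rho\) sees \(\mathcal W_0,\dots,\mathcal W_n\), it can compute the \(\mathcal B_i\) internally, so it suffices to defeat P1 under the assumption that every move is a box cover.

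The heart of the argument is to run \(\sigma\) and \(\tau\) in tandem so that the boxes P2 ultimately selects are genuine members of P1's covers. The naive attempt --- feed the first-coordinate projections \(\pi_X[\mathcal B_n]\) to \(\sigma\) and the second-coordinate projections \(\pi_Y[\mathcal B_n]\) to \(\tau\), and then recombine the chosen first and second coordinates into boxes --- fails, and this failure is exactly the main obstacle: an open cover of a product does not factor as a product of covers of the factors, so a recombined box \(U\times V\) need not belong to any \(\mathcal B_n\). Equivalently, while every point-fiber \(\{x\}\times Y\) is covered by \(\{V:\exists U\ni x,\ U\times V\in\mathcal B_n\}\), the fiber over an \emph{arbitrary} open \(U\subseteq X\) need not be covered, so one cannot cover a whole column \(U\times Y\) after merely committing to \(U\) in the \(X\)-game. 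This is precisely why the bare selection principle is not productive, and why a winning \emph{strategy}, which may react to each of P1's moves, is needed.

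To overcome this I would exploit the adaptivity of \(\sigma\) and \(\tau\) through a doubly-indexed bookkeeping. Fixing a bijection \(\omega\to\omega\times\omega\), I would run a single master play of the \(X\)-game driven by \(\sigma\), whose committed first coordinates exhaust \(X\), and, for each finite stage of that play, spawn a fresh play of the \(Y\)-game driven by \(\tau\); each spawned \(Y\)-play is fed, at each subsequent round, a cover assembled online from the boxes of P1's current move that are relevant to the first coordinates committed so far, using the finite-selection freedom (resp.\ the countably many single selections distributed along the schedule) to keep enlarging the box selection as new first coordinates appear. The bijection guarantees that every spawned \(Y\)-play receives infinitely many rounds and so is won by \(\tau\), and combining this with \(\sigma\) winning on \(X\) should, by a diagonal argument, force the genuine boxes selected along the way to cover \(X\times Y\). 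The hard part will be arranging the schedule and the fed covers so that this coordination actually produces a cover by boxes drawn from P1's covers --- given that only point-fibers are a priori covered, verifying full coverage is the delicate technical core. The finite-product case then follows by induction, and, via a bijection \(\omega^2\to\omega\), this is what later feeds into the \(\omega\)-variant characterizations.
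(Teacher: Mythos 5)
Your reduction to box covers is fine, and you have correctly located the central obstacle: a box cover of \(X\times Y\) only guarantees that the \emph{point}-fibers \(\{x\}\times Y\) are covered by \(\{\pi_Y[W] : W\in\mathcal B_n,\ x\in\pi_X[W]\}\), so after committing to an open \(U\subseteq X\) in an \(X\)-game one cannot cover the column \(U\times Y\). The gap is that your proposed architecture --- a single master \(X\)-play driven by \(\sigma\), whose committed open sets then drive spawned \(Y\)-plays fed with boxes ``relevant to the first coordinates committed so far'' --- runs straight into that same obstacle and leaves it unresolved; you say as much when you defer ``the delicate technical core.'' The covers you would feed to the spawned \(Y\)-plays are exactly the column-fibers over open sets, which need not cover \(Y\), so \(\tau\) cannot legally be applied to them, and no amount of scheduling fixes this. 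The order of the two games must be inverted, with a feedback loop: at round \(n\), for \emph{each point} \(x\in X\) apply \(\tau\) to the history of point-fiber covers \(\mathscr V_m(x)=\pi_Y[\{W\in\mathcal B_m : x\in\pi_X[W]\}]\) to select a genuine box \(W_n(x)\in\mathcal B_n\) with \(x\in\pi_X[W_n(x)]\); the family \(\{\pi_X[W_n(x)] : x\in X\}\) is then an open cover of \(X\) built from \(\tau\)'s \emph{outputs}, and it is \emph{this} cover that \(\sigma\) plays against, its selection pinning down a witness \(x_n\) whose box \(W_n(x_n)\) is what P2 actually plays. This is the architecture of the paper's own proofs of Theorems \ref{StratKRothProductive} and \ref{thm:StraKMengerProductive} (with compact sets in place of points), which adapt the Dias--Scheepers argument that Theorem \ref{thm:DiasScheepers} cites.

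A second, subtler omission: the history that \(\tau\) must be fed at round \(n\) consists of fiber covers over the witness points chosen at \emph{earlier realized rounds}, and which rounds are ``realized'' depends on the target point \(\langle a,b\rangle\) being verified, which P2 does not know in advance. This is why the bookkeeping cannot be an arbitrary bijection \(\omega\to\omega^2\): one needs an enumeration \(\langle s_j : j\in\omega\rangle\) of \(\omega^{<\omega}\) together with a bijection \(\beta:\omega^2\to\omega\) satisfying \(\mathrm{range}(s_j)\subseteq\beta(j,0)\) (Lemma \ref{lem:GeneralComboTool}), so that every possible finite history of realized rounds is anticipated by some thread and the \(X\)-strategy runs a separate play along each column \(\langle\beta(j,k) : k\in\omega\rangle\). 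Your single master \(X\)-play cannot do this. The verification then proceeds as in Theorem \ref{StratKRothProductive}: given \(\langle a,b\rangle\), one recursively extracts rounds \(\ell_n=\beta(m_n,M_n)\) where the \(m_n\)-th \(X\)-thread captures \(a\), checks that the resulting sequence of fiber covers over the \(x_{\ell_p}\) is a legitimate \(\tau\)-play capturing \(b\) at some stage \(w\), and concludes \(\langle a,b\rangle\in W_{\ell_w}(x_{\ell_w})\). Without the feedback loop and this anticipatory indexing, the recombination problem you identified remains open, so the proposal is a plan rather than a proof.
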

As an immediate consequence, if a space \(X\) is strategically Menger or strategically Rothberger, then each of its finite powers
is, too.

To obtain the reverse direction of Theorem \ref{thm:StrategicOmega}\ref{StrategicRothChar},
we use a generalized idea of Galvin \cite{Galvin1978} below, which is proved by a straightforward diagonalization,
and a basic unfolding argument.

\begin{lemma} \label{lem:OpenCoverChoices}
    Let \(X\) be a space, \(\mathcal A\) be a collection of subsets of \(X\),
    and suppose \(\varphi : \mathcal O_X(\mathcal A) \to \mathscr T_X\).
    If \(\varphi(\mathscr U) \in \mathscr U\) for every \(\mathscr U \in \mathcal O_X(\mathcal A)\),
    then there exists \(A \in \mathcal A\) such that \(\mathcal N_A \subseteq \varphi[\mathcal O_X(\mathcal A)]\).
\end{lemma}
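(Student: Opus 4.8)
The plan is to prove the contrapositive, assembling from the hypothesized failures a single \(\mathcal A\)-cover that \(\varphi\) cannot legitimately select from; this is the content of Galvin's diagonalization. So I would assume the conclusion fails, meaning that for every \(A \in \mathcal A\) we have \(\mathcal N_A \not\subseteq \varphi[\mathcal O_X(\mathcal A)]\). Since \(X \in \mathcal N_A\) (so each \(\mathcal N_A\) is nonempty), this failure lets me choose, for each \(A \in \mathcal A\), an open set \(V_A\) with \(A \subseteq V_A\) and \(V_A \notin \varphi[\mathcal O_X(\mathcal A)]\); that is, \(V_A\) is a neighborhood of \(A\) that \(\varphi\) never selects. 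Diagonalizing this choice across all of \(\mathcal A\) yields the family \(\mathscr U := \{ V_A : A \in \mathcal A \}\), every member of which lies outside the image of \(\varphi\).

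Next I would verify that \(\mathscr U \in \mathcal O_X(\mathcal A)\). By construction each \(V_A\) is a member of \(\mathscr U\) with \(A \subseteq V_A\), so \(\mathscr U\) contains a member over each \(A \in \mathcal A\), which is exactly the defining \(\mathcal A\)-cover condition. Granting that \(\mathscr U\) covers \(X\), I then have \(\varphi(\mathscr U) \in \mathscr U\), so \(\varphi(\mathscr U) = V_{A_0}\) for some \(A_0 \in \mathcal A\); but \(\varphi(\mathscr U)\) belongs to \(\varphi[\mathcal O_X(\mathcal A)]\) while \(V_{A_0}\) does not, a contradiction. Hence some \(A \in \mathcal A\) must satisfy \(\mathcal N_A \subseteq \varphi[\mathcal O_X(\mathcal A)]\).

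The one step that needs care, and the place where the assumptions on \(\mathcal A\) really enter, is checking that \(\mathscr U\) actually covers \(X\). Since \(A \subseteq V_A\) we get \(\bigcup \mathscr U \supseteq \bigcup \mathcal A\), so \(\mathscr U\) is a cover precisely when \(\bigcup \mathcal A = X\); this covering condition is genuinely needed, as one sees on \(X = \{0,1\}\) discrete with \(\mathcal A = \{\{0\}\}\), where \(\varphi\) can be arranged so that \(\{0\}\) is never selected. In the intended instances the condition is automatic: for \(\mathcal A = [X]^{<\omega}\) and for \(\mathcal A\) the collection of compact subsets, every singleton belongs to \(\mathcal A\), so \(\bigcup \mathcal A = X\). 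I would therefore run the argument under this (harmless-in-application) covering hypothesis, and note that the remaining bookkeeping, namely nonemptiness of each \(V_A\) and of \(\mathcal O_X(\mathcal A)\) so that \(\varphi\) has the expected domain, is routine.
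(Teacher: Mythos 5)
Your argument is the paper's own proof: pass to the contrapositive, pick for each \(A\in\mathcal A\) a neighborhood \(U_A\in\mathcal N_A\) outside \(\varphi[\mathcal O_X(\mathcal A)]\), and observe that \(\mathscr U=\{U_A:A\in\mathcal A\}\) is an \(\mathcal A\)-cover from which \(\varphi\) cannot select. Your extra observation is also correct and worth keeping: since \(\mathcal O_X(\mathcal A)\) consists of \emph{open covers} of \(X\), the constructed \(\mathscr U\) need not lie in \(\mathcal O_X(\mathcal A)\) unless \(\bigcup\mathcal A=X\), and your two-point discrete example shows the lemma genuinely fails without that hypothesis; the paper's proof silently assumes it. The gap is harmless for every use in the paper (finite, compact, and relatively compact subsets all include the singletons), but stating the covering hypothesis explicitly, as you do, is the more careful formulation.
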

\begin{proof}
    By way of contrapositive, suppose that, for every \(A \in \mathcal A\), there exists \(U \in \mathcal N_A\)
    such that \(U \not\in \varphi[\mathcal O_X(\mathcal A)]\).
    So let \(U_A \in \mathcal N_A\) witness this property for each \(A \in \mathcal A\).
    Note that \(\mathscr U := \{ U_A : A \in \mathcal A \} \in \mathcal O_X(\mathcal A)\).
    It follows, by construction, that \(\varphi(\mathscr U) \not\in \mathscr U\).
\end{proof}
\begin{theorem} \label{thm:StrategicOmega}
    Let \(X\) be a space.
    \begin{enumerate}[label=(\arabic*),ref=(\arabic*)]
        \item \label{StrategicMengerChar}
        \(X\) is strategically \(\omega\)-Menger if and only if it is strategically Menger.
        \item \label{StrategicRothChar}
        \(X\) is strategically \(\omega\)-Rothberger if and only it is strategically Rothberger.
    \end{enumerate}
\end{theorem}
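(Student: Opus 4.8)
The plan is to prove each of \ref{StrategicMengerChar} and \ref{StrategicRothChar} as a pair of implications, handling the selection modes \(\square\in\{1,\mathrm{fin}\}\) in parallel. The implication ``strategically (Menger/Rothberger) \(\implies\) strategically \(\omega\)-(Menger/Rothberger)'' I would obtain from the finite productivity supplied by Theorem \ref{thm:DiasScheepers} (and the consequence, noted just after it, that all finite powers inherit the property) together with a bijection \(\omega^2\to\omega\): essentially, run winning strategies on all finite powers at once. The reverse implication is the delicate one and is where Lemma \ref{lem:OpenCoverChoices} enters.

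For the first implication, suppose P2 has a winning strategy in \(\mathsf G_\square(\mathcal O_X,\mathcal O_X)\). Then each finite power \(X^{n+1}\) is strategically Menger (resp.\ Rothberger), so fix a winning P2-strategy \(\rho_n\) in \(\mathsf G_\square(\mathcal O_{X^{n+1}},\mathcal O_{X^{n+1}})\). Fix a bijection \(\omega\to\omega\times\omega\), inducing a partition \(\omega=\bigsqcup_{n}I_n\) into infinite pieces. I would have P2 play \(\mathsf G_\square(\Omega_X,\Omega_X)\) by using the rounds indexed by \(I_n\) to simulate a full run of \(\rho_n\): when P1 plays an \(\omega\)-cover \(\mathscr U\) in a round belonging to \(I_n\), P2 feeds the open cover \(\{U\times\cdots\times U:U\in\mathscr U\}\) of \(X^{n+1}\) to \(\rho_n\) (this is a cover precisely because \(\mathscr U\) is an \(\omega\)-cover, so every \((n+1)\)-tuple lies in a common \(U\)), and then translates \(\rho_n\)'s selection back to the corresponding members of \(\mathscr U\). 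Since each \(\rho_n\) wins, the selections made along \(I_n\) cover \(X^{n+1}\); and a family \(\{U\times\cdots\times U\}\) covers \(X^{n+1}\) exactly when every subset of \(X\) of size at most \(n+1\) is contained in some selected \(U\). Ranging over all \(n\) then shows the full collection of P2's selections is an \(\omega\)-cover of \(X\), so P2 wins the \(\omega\)-game.

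For the converse, the finite-selection case is easy: given a winning P2-strategy \(\sigma\) in \(\mathsf G_{\mathrm{fin}}(\Omega_X,\Omega_X)\), when P1 plays an open cover \(\mathscr O\), P2 feeds \(\sigma\) the \(\omega\)-cover \(\{\bigcup\mathscr F:\mathscr F\in[\mathscr O]^{<\omega}\}\) (genuinely an \(\omega\)-cover under the convention \(i\in\{2,3\}\), since \(X\) is permitted as a member), receives a finite set of finite unions, and plays the finitely many members of \(\mathscr O\) occurring in them; these cover whatever \(\sigma\)'s output covers, hence all of \(X\). The single-selection case cannot use this trick, because a member of \(\{\bigcup\mathscr F\}\) is a finite union, not a single member of \(\mathscr O\); this is exactly where Lemma \ref{lem:OpenCoverChoices} is needed. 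For a winning \(\sigma\) in \(\mathsf G_1(\Omega_X,\Omega_X)\) and any legal partial play \(s\) of \(\omega\)-covers, the assignment \(\mathscr U\mapsto\sigma(s\concat\mathscr U)\) is a selector on \(\Omega_X=\mathcal O_X([X]^{<\omega})\), so Lemma \ref{lem:OpenCoverChoices} furnishes a finite \(F_s\subseteq X\) whose entire neighborhood system \(\mathcal N_{F_s}\) lies in the range: every open \(W\supseteq F_s\) equals \(\sigma(s\concat\mathscr U_W)\) for some \(\omega\)-cover \(\mathscr U_W\). The idea is then to maintain a simulated \(\Omega\)-play whose current history is such an \(s\) and, confronted with P1's open cover in the real \(\mathcal O\)-game, to select a real open set that is simultaneously a member of that cover and a legal \(\sigma\)-output, recording the witnessing \(\mathscr U_W\) as the next simulated move; since the simulated single selections then coincide with \(\sigma\)'s and \(\sigma\) wins, they form an \(\omega\)-cover, hence an open cover, of \(X\).

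The main obstacle is precisely this last matching step in the Rothberger converse: an arbitrary open cover played by P1 need not contain a single member that contains the finite set \(F_s\), so one cannot naively realize a prescribed \(\sigma\)-output by a single real selection. Overcoming this is what the \emph{unfolding} is for — spreading the bookkeeping so that the finitely many points of each \(F_s\) are accommodated across the simulation while keeping the simulated \(\Omega\)-play legal and the induced selections an \(\omega\)-cover. I expect verifying that the unfolding both respects the single-selection constraint and preserves \(\sigma\)'s winning condition to be the crux; by contrast, the Menger converse and the finite-power direction are routine once the correspondences above are in place.
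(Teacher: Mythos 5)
Your proposal is correct and follows essentially the same route as the paper: the forward directions come from Theorem \ref{thm:DiasScheepers} plus an interleaving of the finite-power strategies via a bijection \(\omega^2\to\omega\) applied to the cube covers \(\{U^{n+1}:U\in\mathscr U\}\), the Menger converse closes open covers under finite unions, and the Rothberger converse applies Lemma \ref{lem:OpenCoverChoices} to each partial \(\sigma\)-history to get a finite set \(F_s\) whose points are then picked up one per real round so that the block union is a legal \(\sigma\)-output. The ``unfolding'' you flag as the crux is exactly the paper's bookkeeping with the counters \(M_n\) (allotting \(\#F_{n+1}\) consecutive real rounds to the \((n+1)\)-st simulated move), and the verification you anticipate goes through without surprises.
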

\begin{proof}
    The content of \ref{StrategicMengerChar} is \cite[Thm. 35]{ClontzRelatingGamesOfMenger}.

    Now we address \ref{StrategicRothChar}.
    Suppose \(X\) is strategically Rothberger.
    By Theorem \ref{thm:DiasScheepers}\ref{DiasScheepersRothberger},
    \(X^{n+1}\) is strategically Rothberger for every \(n \in \omega\).
    So let \(\sigma_n\) be a winning strategy for P2 in the Rothberger game on \(X^{n+1}\)
    for each \(n \in \omega\), and let \(\beta : \omega^2 \to \omega\) be a bijection
    with the property that \(\langle \beta(n,k) : k \in \omega\rangle\) is strictly increasing
    for \(n\in\omega\).
    Now define the strategy \(\sigma\) in the \(\omega\)-Rothberger game on \(X\) in the following way.
    Given \(n \in \omega\) and a sequence \(\langle \mathscr U_j : j < n \rangle\) of \(\omega\)-covers of \(X\),
    let \((m,k) \in \omega^2\) be such that \(\beta(m,k) = n\).
    Note that, for each \(\ell \leq k\),
    \[\mathscr U_{\beta(m,\ell)}^{(m+1)} := \left\{ U^{m+1} : U \in \mathscr U_{\beta(m,\ell)} \right\} \in \mathcal O_{X^{m+1}}.\]
    So we can define
    \[\sigma(\langle \mathscr U_j : j \leq n \rangle) =
    \pi_{m+1} \left[\sigma_m\left(\left\langle \mathscr U_{\beta(m,\ell)}^{(m+1)} : \ell \leq k \right\rangle \right)\right],\]
    where \(\pi_{m+1}\) is the projection mapping.
    From here, it's straightforward to check that \(\sigma\) is winning.

    Now we assume that \(X\) is strategically \(\omega\)-Rothberger.
    So let \(\sigma_0\) be a winning strategy for P2 in the \(\omega\)-Rothberger
    game.
    We define a strategy \(\sigma\) for P2 in the Rothberger game as follows.
    
    By Lemma \ref{lem:OpenCoverChoices}, we let
    \[F_0 \in [X]^{<\omega}\]
    be such that
    \[\mathcal N_{F_0} \subseteq \left\{ \sigma_0(\mathscr W) : \mathscr W \in \Omega_X \right\}.\]
    Let \(M_0 = -1 + \# F_0\) and enumerate it as
    \(F_0 = \{x_0, \ldots, x_{M_0}\}\).
    Now suppose \(\langle \mathscr U_j : j \leq M_0 \rangle\) is a given
    sequence of open covers of \(X\).
    For each \(k \leq M_0\), choose \(\sigma(\langle \mathscr U_j : j \leq k \rangle) \in \mathscr U_k\)
    to be such that
    \[x_k \in \sigma(\langle \mathscr U_j : j \leq k \rangle).\]
    Then, we can choose
    \(\mathscr W_0 \in \Omega_X\)
    to be such that
    \[\sigma_0\left( \mathscr W_0 \right)
    = \bigcup_{k=0}^{M_0} \sigma(\langle \mathscr U_j : j \leq k \rangle).\]
    
    Now let \(n \in \omega\) be given and suppose we have
    \(\langle F_j : j \leq n \rangle\), \(\langle M_j : j \leq n \rangle\),
    \(\langle \mathscr W_j : j \leq n \rangle\), and \(\langle \mathscr U_j : j \leq M_n \rangle\)
    defined.
    As above, we can let \(F_{n+1} \in [X]^{<\omega}\) be such that
    \[\mathcal N_{F_{n+1}}
    \subseteq \left\{\sigma_0\left( \mathscr W_0, \ldots, \mathscr W_n , \mathscr W \right)
    : \mathscr W \in \Omega_X \right\}.\]
    Let \(M_{n+1} = M_n + \# F_{n+1}\) and enumerate \(F_{n+1}\)
    as \(\{ x_{M_n+1} , \ldots , x_{M_{n+1}} \}\).
    Then, given a sequence \[\langle \mathscr U_j : M_n < j \leq M_{n+1} \rangle\]
    of open covers of \(X\), we define, for \(M_n < k \leq M_{n+1}\),
    \[\sigma(\langle \mathscr U_j : j \leq k \rangle) \in \mathscr U_k\]
    to be such that
    \[x_k \in \sigma(\langle \mathscr U_j : j \leq k \rangle).\]
    Then, we set \(\mathscr W_{n+1} \in \Omega_X\) to be such that
    \[\sigma_0(\mathscr W_0, \ldots , \mathscr W_{n+1})
    = \bigcup_{k=M_n+1}^{M_{n+1}} \sigma(\langle \mathscr U_j : j \leq k \rangle).\]
    This defines \(\sigma\).
    
    To see that \(\sigma\) is winning, consider a play \(\langle \mathscr U_n : n \in \omega\rangle\)
    of the game according to \(\sigma\) and let \(x \in X\) be arbitrary.
    Since \(\sigma_0\) is winning in the \(\omega\)-Rothberger game, there is some
    \(n \in \omega\) such that
    \[\{x\} \subseteq \sigma_0(\mathscr W_0, \ldots, \mathscr W_n)
    = \bigcup_{k=M_n+1}^{M_{n+1}} \sigma(\langle \mathscr U_j : j \leq k \rangle).\]
    Hence, there is some \(M_n < k \leq M_{n+1}\) such that
    \[x \in \sigma(\mathscr U_0, \ldots, \mathscr U_k).\]
    That is, \(\sigma\) is winning.
\end{proof}

To characterize the Markov properties of interest in this paper, we will need some other notions.
\begin{definition}
    A space \(X\) is said to be \emph{topologically countable} if there exists
    \(\{ x_n : n \in \omega \} \subseteq X\) such that
    \(X = \bigcup_{n\in\omega} \bigcap \mathcal N_{x_n}\).
\end{definition}
\begin{lemma} \label{lem:BasicT1}
    If \(X\) is a \(T_1\) space, then, for \(A \subseteq X\), \(A = \bigcap \mathcal N_A\).
    Consequently, any \(T_1\) space that is topologically countable is countable.
\end{lemma}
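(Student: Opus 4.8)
The plan is to prove the two assertions in order, with the first (the identity $A = \bigcap \mathcal N_A$) carrying essentially all the content and the second following as a one-line consequence. Recall that $\mathcal N_A = \{ U \in \mathscr T_X : A \subseteq U \}$, so $\bigcap \mathcal N_A$ is the intersection of all open sets containing $A$.

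For the inclusion $A \subseteq \bigcap \mathcal N_A$, I would simply observe that every $U \in \mathcal N_A$ satisfies $A \subseteq U$ by definition, so each point of $A$ lies in every member of $\mathcal N_A$, hence in their intersection. This direction uses no separation hypothesis. For the reverse inclusion $\bigcap \mathcal N_A \subseteq A$, I would argue contrapositively: given $x \notin A$, I want to produce a single open set containing $A$ but omitting $x$. Here is where $T_1$ enters: in a $T_1$ space singletons are closed, so $X \setminus \{x\}$ is open, and since $x \notin A$ we have $A \subseteq X \setminus \{x\}$. Thus $X \setminus \{x\} \in \mathcal N_A$ while $x \notin X \setminus \{x\}$, whence $x \notin \bigcap \mathcal N_A$. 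Combining the two inclusions gives the claimed equality.

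For the consequence, suppose $X$ is $T_1$ and topologically countable, witnessed by $\{ x_n : n \in \omega \} \subseteq X$ with $X = \bigcup_{n \in \omega} \bigcap \mathcal N_{x_n}$. Applying the first part with $A = \{x_n\}$ (and recalling $\mathcal N_{x_n} = \mathcal N_{\{x_n\}}$) yields $\bigcap \mathcal N_{x_n} = \{x_n\}$ for each $n$, so that $X = \bigcup_{n \in \omega} \{x_n\} = \{ x_n : n \in \omega \}$ is countable.

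There is no real obstacle here; the only point requiring care is the correct use of the $T_1$ axiom, namely that it is exactly the statement that every singleton is closed (equivalently, that complements of singletons are open), which is precisely what supplies the separating open set $X \setminus \{x\}$ in the reverse inclusion.
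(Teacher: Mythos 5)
Your proof is correct and follows essentially the same route as the paper's: the forward inclusion is trivial, and the reverse inclusion uses exactly the same witness \(X \setminus \{x\} \in \mathcal N_A\) supplied by the \(T_1\) axiom. Your explicit derivation of the countability consequence (applying the identity to each singleton \(\{x_n\}\)) is the intended, obvious one, which the paper leaves implicit.
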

\begin{proof}
    Clearly \(A \subseteq \bigcap \mathcal N_A\).
    On the other hand, for \(x \not\in A\), \(X \setminus \{x\} \in \mathcal N_A\)
    since \(X\) is \(T_1\).
    Hence, \(x \not\in \bigcap \mathcal N_A\).
\end{proof}
\begin{example}[\href{https://topology.pi-base.org/spaces/S000042}{S42}\footnote{Throughout, we will refer to spaces by their ID in the \(\pi\)-base \cite{PiBase}.}]
    The right-ordered reals \(X\) is an example of an uncountable \(T_0\) space which is topologically countable.
    Indeed, since the basis for \(X\) consists of intervals \((a,\infty)\) for \(a \in \mathbb R\), the set of integers witnesses
    topological countability.
\end{example}
\begin{definition}
    For a subset \(A\) of a space \(X\), \(A\) is said to be \emph{relatively compact} (in \(X\)) if every open cover of \(X\)
    admits a finite subset which covers \(A\).
    A space \(X\) is said to be \emph{\(\sigma\)-relatively compact} if there exists a countable collection
    \(\{ A_n : n \in \omega \}\) of relatively compact subsets of \(X\) such that \(X = \bigcup_{n\in\omega} A_n\).
\end{definition}
The usual Tube Lemma idea applies to show that the property of relative compactness is also productive.
\begin{lemma} \label{lemma:ProductOfRelativelyCompact}
    If \(A\) and \(B\) are relatively compact subsets of \(X\) and \(Y\), respectively, then \(A \times B\) is relatively
    compact in \(X \times Y\).
\end{lemma}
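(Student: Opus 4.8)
The plan is to mimic the standard Tube Lemma proof that a product of compact spaces is compact, but with ``compact'' weakened to ``relatively compact'' and ``finite subcover'' replaced throughout by ``finite subfamily covering the relevant set.'' So let \(\mathscr U\) be an arbitrary open cover of \(X \times Y\); I must extract a finite \(\mathscr F \subseteq \mathscr U\) covering \(A \times B\). First I would reduce to the case of a cover by basic rectangles. Let \(\mathscr B\) consist of all open boxes \(U \times V\) (with \(U\) open in \(X\), \(V\) open in \(Y\)) such that \(U \times V \subseteq G\) for some \(G \in \mathscr U\). Since every point of \(X \times Y\) has a basic box neighborhood contained in a member of \(\mathscr U\), the family \(\mathscr B\) is again an open cover of \(X \times Y\); and a finite subfamily of \(\mathscr B\) covering \(A \times B\) yields, upon choosing one containing member of \(\mathscr U\) for each box, the desired finite \(\mathscr F \subseteq \mathscr U\). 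Thus it suffices to work with \(\mathscr B\).

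The core step is the tube construction. Fix \(a \in X\) and consider the family \(\{ V : (\exists U \ni a)\ U \times V \in \mathscr B \}\) of open subsets of \(Y\). Because the slice \(\{a\} \times Y\) is covered by \(\mathscr B\), this family is an open cover of \(Y\), so relative compactness of \(B\) in \(Y\) produces finitely many boxes \(U_1 \times V_1, \ldots, U_n \times V_n \in \mathscr B\) with each \(U_i \ni a\) and \(B \subseteq \bigcup_{i \leq n} V_i\). Setting \(W_a = \bigcap_{i \leq n} U_i\) gives an open neighborhood of \(a\) satisfying \(W_a \times B \subseteq \bigcup_{i \leq n} U_i \times V_i\), which is exactly a ``tube'' around \(\{a\} \times B\) covered by finitely many members of \(\mathscr B\).

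Finally I would run relative compactness of \(A\) in \(X\) against the open cover \(\{ W_a : a \in X \}\) of \(X\): it yields \(a_1, \ldots, a_m\) with \(A \subseteq \bigcup_{j \leq m} W_{a_j}\). Then \(A \times B \subseteq \bigcup_{j \leq m} (W_{a_j} \times B)\), and each \(W_{a_j} \times B\) is covered by the finitely many boxes collected at stage \(a_j\); accumulating over \(j \leq m\) gives a finite subfamily of \(\mathscr B\) covering \(A \times B\), which transfers back to \(\mathscr U\) as above.

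The argument is essentially routine, so the only real obstacle is bookkeeping around the definition in use: here \emph{relative compactness} of \(B\) means that every open cover of the ambient space \(Y\) admits a finite subfamily covering \(B\), not that \(\overline{B}\) is compact. I must therefore take care to feed genuine open covers of \(Y\) and of \(X\) (not covers of \(B\) or \(A\) as subspaces) into the definition at the two places it is invoked. All the actual content lives in the box reduction and the tube construction; everything else is verification of the stated inclusions.
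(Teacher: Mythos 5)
Your proof is correct and follows essentially the same Tube Lemma argument as the paper: reduce to a cover by open rectangles, use relative compactness of \(B\) to build a finite tube over each slice \(\{a\}\times B\), then use relative compactness of \(A\) to select finitely many tube bases. The only difference is presentational — you justify the reduction to basic covers explicitly and feed an honest open cover of \(Y\) into the definition, where the paper instead invokes relative compactness of \(\{x\}\times B\) in the slice \(\{x\}\times Y\); both come to the same thing.
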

\begin{proof}
    Without loss of generality, we consider only basic covers.
    So let \(\mathscr W\) be an open cover of \(X\times Y\) consisting of rectangles.
    Note that, for any \(x\in X\), \(\{x\}\times B\) is relatively compact in \(\{x\} \times Y\)
    viewed as a subspace of \(X \times Y\).
    So, for each \(x\in X\), we can let \(\mathscr F_x \in [\mathscr W]^{<\omega}\) be such that
    \(\{ x \} \times B \subseteq \bigcup \mathscr F_x\).
    Then let \(U_x = \bigcap \{ \pi_X[W] : W \in \mathscr F_x \}\).
    Now, \(\{ U_x : x \in X \}\) is an open cover of \(X\) so we can find \(F \in [X]^{<\omega}\)
    such that \(A \subseteq \bigcup \{ U_x : x \in F \}\).
    Define \(\mathscr W_0 = \bigcup_{x \in F} \mathscr F_x\).

    To finish the proof, we need only show that \(A \times B \subseteq \bigcup \mathscr W_0\).
    So let \(\langle x, y \rangle \in A \times B\) be arbitrary.
    There is some \(a \in F\) so that \(x \in U_a\).
    Since \(y \in B\), there is some \(W \in \mathscr F_a\) so that \(\langle a, y \rangle \in W\).
    Note then that \(x \in U_a \subseteq \pi_X[W]\) which establishes that \(\langle x , y \rangle \in W\).
\end{proof}
\begin{lemma} \label{lem:RelCompactRegular}
    The closure of any relatively compact subset in a regular space is compact.
    (See \cite{ArhangelskiiClassic} and also \cite[Prop. 4.4]{ClontzApplicationsOfLimitedInformation}.)
\end{lemma}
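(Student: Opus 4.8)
The plan is to take an arbitrary open cover of the closure and extract a finite subcover, leveraging the relative compactness of the set as a cover of \emph{all} of \(X\) together with the shrinking afforded by regularity. First I would let \(\mathscr U\) be a family of open subsets of \(X\) with \(\overline A \subseteq \bigcup \mathscr U\). For each \(x \in \overline A\), fix \(U_x \in \mathscr U\) with \(x \in U_x\); since \(X\) is regular, I can choose an open \(V_x\) with \(x \in V_x \subseteq \overline{V_x} \subseteq U_x\). Regularity, applied to the point \(x\) and the open set \(U_x\), yields such a closed neighborhood even without \(T_1\): separating \(x\) from the closed set \(X \setminus U_x\) by disjoint open \(V_x \ni x\) and \(W \supseteq X \setminus U_x\) gives \(\overline{V_x} \subseteq X \setminus W \subseteq U_x\).

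Next I would assemble an honest open cover of the whole space, namely
\[
\mathscr W = \{ V_x : x \in \overline A \} \cup \{ X \setminus \overline A \}.
\]
This is legitimate because \(\overline A\) is closed, so \(X \setminus \overline A\) is open, and every point of \(X\) lies either in some \(V_x\) (if it belongs to \(\overline A\)) or in \(X \setminus \overline A\) (otherwise). Applying the definition of relative compactness to \(A\), there is a finite \(\mathscr W_0 \in [\mathscr W]^{<\omega}\) covering \(A\). Since \(A \subseteq \overline A\) is disjoint from \(X \setminus \overline A\), that set contributes nothing to covering \(A\), so finitely many of the \(V_x\), say \(V_{x_0}, \ldots, V_{x_n}\), already satisfy \(A \subseteq \bigcup_{i \leq n} V_{x_i}\).

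Finally I would pass to closures. Because \(\bigcup_{i\leq n}\overline{V_{x_i}}\) is a finite union of closed sets, it is itself closed and contains \(A\), hence contains \(\overline A\); and each \(\overline{V_{x_i}} \subseteq U_{x_i}\), so \(\overline A \subseteq \bigcup_{i\leq n} U_{x_i}\). This is a finite subfamily of \(\mathscr U\) covering \(\overline A\), which establishes compactness. The only delicate points are verifying that \(\mathscr W\) genuinely covers \(X\) (so that relative compactness may be applied) and that the regularity-based shrinking produces closed neighborhoods sitting inside members of \(\mathscr U\); the remainder is the routine observation that a finite union of closed sets is closed, which is precisely what converts ``\(A\) is covered'' into ``\(\overline A\) is covered.''
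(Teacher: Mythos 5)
Your proof is correct: the regularity-based shrinking $x \in V_x \subseteq \overline{V_x} \subseteq U_x$ is valid without $T_1$, the family $\{V_x : x \in \overline A\} \cup \{X \setminus \overline A\}$ genuinely covers $X$ so relative compactness applies, and passing from $A \subseteq \bigcup_{i \leq n} V_{x_i}$ to $\overline A \subseteq \bigcup_{i \leq n} \overline{V_{x_i}} \subseteq \bigcup_{i \leq n} U_{x_i}$ via the closedness of a finite union of closed sets is exactly the right closing step. The paper does not prove this lemma itself but defers to the cited references, and your argument is essentially the standard one found there, so there is nothing further to add.
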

An immediate consequence is
\begin{corollary} \label{cor:RegularSigmaRelCompac}
    Every regular \(\sigma\)-relatively compact space is \(\sigma\)-compact.
\end{corollary}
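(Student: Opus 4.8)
The plan is to simply unwind the two definitions and apply Lemma \ref{lem:RelCompactRegular} termwise. Suppose $X$ is regular and $\sigma$-relatively compact, so by definition there is a countable family $\{A_n : n \in \omega\}$ of relatively compact subsets of $X$ with $X = \bigcup_{n\in\omega} A_n$. The goal is to exhibit $X$ as a countable union of compact sets, and the natural candidates are the closures $\overline{A_n}$.

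First I would invoke Lemma \ref{lem:RelCompactRegular}: since $X$ is regular and each $A_n$ is relatively compact, each closure $\overline{A_n}$ is compact. This is the only nontrivial input, and it is exactly the lemma just proved, so there is nothing to re-derive. Next I would check that these compact sets still cover $X$. Since $A_n \subseteq \overline{A_n} \subseteq X$ for every $n$, we have
\[
X = \bigcup_{n\in\omega} A_n \subseteq \bigcup_{n\in\omega} \overline{A_n} \subseteq X,
\]
so $X = \bigcup_{n\in\omega} \overline{A_n}$. This displays $X$ as a countable union of compact subsets, which is precisely $\sigma$-compactness.

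There is no real obstacle here; the corollary is immediate once Lemma \ref{lem:RelCompactRegular} is in hand, which is why the excerpt introduces it as ``an immediate consequence.'' The only point worth a moment's care is that passing from the $A_n$ to their closures does not enlarge the union beyond $X$ (guaranteed because each closure is taken inside $X$) and does not shrink it (guaranteed because $A_n \subseteq \overline{A_n}$), both of which are automatic.
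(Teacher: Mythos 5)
Your proof is correct and is exactly the argument the paper intends: the corollary is stated as an immediate consequence of Lemma \ref{lem:RelCompactRegular}, with no written proof, and the intended reasoning is precisely to replace each relatively compact \(A_n\) by its (compact) closure and note the union is unchanged. Nothing to add.
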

\begin{example}[\href{https://topology.pi-base.org/spaces/S000059}{S59}]
    The indiscrete irrational extension of the reals is \(\sigma\)-relatively compact but not \(\sigma\)-compact (see
    \cite[Ex. 5.8]{ClontzApplicationsOfLimitedInformation}).
\end{example}

With eyes toward generality, we define the following types of cofinality.
\begin{definition}
    Let \(X\) be a set and suppose \(\mathcal A\) and \(\mathcal B\) are collections of subsets of \(X\).
    We say that \(\mathrm{cof}_X(\mathcal A, \mathcal B) \leq \omega\) if there exists
    \(\{ A_n : n \in \omega\} \subseteq \mathcal A\) such that, for every \(B \in \mathcal B\), there exists \(n \in \omega\)
    such that \(B \subseteq A_n\).
\end{definition}
In words, the condition \(\mathrm{cof}_X(\mathcal A, \mathcal B) \leq \omega\) is asserting that
\(\mathcal A\) is of cofinality type \(\omega\) relative to \(\mathcal B\).

We also use a slightly weaker version, inspired by the notion of being topologically countable.
\begin{definition}
    Let \(X\) be a space and suppose \(\mathcal A\) and \(\mathcal B\) are collections of subsets of \(X\).
    We say that \(\widehat{\mathrm{cof}}_X(\mathcal A, \mathcal B) \leq \omega\) if there exists
    \(\{ A_n : n \in \omega\} \subseteq \mathcal A\) such that, for every \(B \in \mathcal B\), there exists \(n \in \omega\)
    such that \(B \subseteq \bigcap \mathcal N_{A_n}\).
\end{definition}

Note that, if we identify \(X\) with the set of singletons and let \(\mathcal A\) be the collection
of relatively compact subsets of \(X\), then \(X\) is
\begin{itemize}
    \item 
    \(\sigma\)-relatively compact if and only if \(\mathrm{cof}_X(\mathcal A , X) \leq \omega\).
    \item 
    topologically countable if and only if \(\widehat{\mathrm{cof}}_X(X,X) \leq \omega\).
\end{itemize}
\begin{lemma} \label{lem:NearlyCofinal}
    For any space \(X\) and a collection \(\mathcal A\) of subsets of \(X\),
    \(\widehat{\mathrm{cof}}_X(\mathcal A, \mathcal A) \leq \omega\)
    if and only if \(\mathrm{II} \underset{\mathrm{mark}}{\uparrow} \mathsf G_1(\mathcal O_X(\mathcal A), \mathcal O_X(\mathcal A))\).
\end{lemma}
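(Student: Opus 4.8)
The plan is to prove the two implications separately, with Lemma \ref{lem:OpenCoverChoices} doing the heavy lifting for the reverse direction.

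For the forward direction, I would assume \(\widehat{\mathrm{cof}}_X(\mathcal A, \mathcal A) \leq \omega\), fix a witnessing family \(\{A_n : n \in \omega\} \subseteq \mathcal A\), and build a Markov strategy \(\sigma\) for P2 by declaring \(\sigma(\mathscr U, n)\) to be some \(U \in \mathscr U\) with \(A_n \subseteq U\). Such a \(U\) exists precisely because \(\mathscr U \in \mathcal O_X(\mathcal A)\) and \(A_n \in \mathcal A\), and since the choice depends only on the current cover \(\mathscr U\) and the round number \(n\), \(\sigma\) is Markov. The point is that in any play \(\langle \mathscr U_n : n \in \omega \rangle\), the response \(U_n := \sigma(\mathscr U_n, n)\) is an open neighborhood of \(A_n\), so \(U_n \in \mathcal N_{A_n}\) and therefore \(\bigcap \mathcal N_{A_n} \subseteq U_n\). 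Thus, given any \(B \in \mathcal A\), choosing \(n\) with \(B \subseteq \bigcap \mathcal N_{A_n}\) yields \(B \subseteq U_n\); hence \(\{U_n : n \in \omega\}\) is an \(\mathcal A\)-cover and \(\sigma\) is winning.

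For the reverse direction, I would start from a winning Markov strategy \(\sigma\) and, for each \(n \in \omega\), apply Lemma \ref{lem:OpenCoverChoices} to the map \(\varphi_n : \mathcal O_X(\mathcal A) \to \mathscr T_X\) given by \(\varphi_n(\mathscr U) = \sigma(\mathscr U, n)\); since \(\sigma\) is a legal strategy we have \(\varphi_n(\mathscr U) \in \mathscr U\), so the lemma supplies \(A_n \in \mathcal A\) with \(\mathcal N_{A_n} \subseteq \varphi_n[\mathcal O_X(\mathcal A)]\). I then claim that \(\{A_n : n \in \omega\}\) witnesses \(\widehat{\mathrm{cof}}_X(\mathcal A, \mathcal A) \leq \omega\). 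Arguing by contradiction, suppose some \(B \in \mathcal A\) satisfies \(B \not\subseteq \bigcap \mathcal N_{A_n}\) for every \(n\). For each \(n\) this failure produces an open set \(V_n\) with \(A_n \subseteq V_n\) (so \(V_n \in \mathcal N_{A_n}\)) but \(B \not\subseteq V_n\); and since \(V_n \in \mathcal N_{A_n} \subseteq \varphi_n[\mathcal O_X(\mathcal A)]\), I can pick \(\mathscr U_n \in \mathcal O_X(\mathcal A)\) with \(\sigma(\mathscr U_n, n) = V_n\). Running \(\sigma\) against the play \(\langle \mathscr U_n : n \in \omega\rangle\) then produces exactly the responses \(\langle V_n : n \in \omega\rangle\), which must form an element of \(\mathcal O_X(\mathcal A)\) because \(\sigma\) wins; being an \(\mathcal A\)-cover, some \(V_n\) contains \(B\), contradicting the choice of the \(V_n\).

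I expect the reverse direction to be the main obstacle, and the crucial device there is Lemma \ref{lem:OpenCoverChoices}: it converts the round-\(n\) section of the Markov strategy into a single set \(A_n \in \mathcal A\) that captures every open neighborhood P2 might be forced to play in round \(n\). The remaining subtlety is bookkeeping with \(\bigcap \mathcal N_{A_n}\): one must recognize that \(B \not\subseteq \bigcap \mathcal N_{A_n}\) is exactly the statement that some admissible open neighborhood of \(A_n\) omits a point of \(B\), which is what makes the diagonal play available. In the forward direction the only thing to keep in mind is that the selections \(\{U_n\}\) form a genuine member of \(\mathcal O_X(\mathcal A)\); as in every application here each point of \(X\) lies in a member of \(\mathcal A\), so the \(\mathcal A\)-covering property already delivers an open cover of \(X\).
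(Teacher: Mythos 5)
Your proof is correct and follows essentially the same route as the paper: the forward direction builds the identical Markov strategy from the witnessing family, and the reverse direction uses Lemma \ref{lem:OpenCoverChoices} round-by-round to extract the sets \(A_n\) and then diagonalizes, the only difference being that you argue by contradiction where the paper argues by contrapositive. Your closing remark about \(\{U_n : n\in\omega\}\) actually covering \(X\) correctly identifies the one implicit convention (that \(\bigcup\mathcal A = X\) in all applications) that the paper also relies on silently.
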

\begin{proof}
    Let \(\{ A_n : n \in \omega \} \subseteq \mathcal A\) witness that
    \(\widehat{\mathrm{cof}}_X(\mathcal A, \mathcal A) \leq \omega\).
    For each \(n \in \omega\) and \(\mathscr U \in \mathcal O_X(\mathcal A)\),
    choose \(\sigma(\mathscr U,n) \in \mathscr U\)
    such that \(A_n \subseteq \sigma(\mathscr U, n)\).
    To see that \(\sigma\) is winning, let \(\{ \mathscr U_n : n \in \omega \} \subseteq \mathcal O_X(\mathcal A)\)
    and \(A \in \mathcal A\).
    Choose \(n \in \omega\) to be such that
    \(A \subseteq \bigcap \mathcal N_{A_n}\).
    Then \[A \subseteq \bigcap \mathcal N_{A_n} \subseteq \sigma(\mathscr U_n, n).\]
    Hence, \(\{ \sigma(\mathscr U_n,n) : n \in \omega \} \in \mathcal O_X(\mathcal A)\).

    Now suppose that \(\widehat{\mathrm{cof}}_X(\mathcal A, \mathcal A) \not\leq \omega\)
    and let \(\sigma\) be a Markov strategy for P2 in \(\mathsf G_1(\mathcal O_X(\mathcal A), \mathcal O_X(\mathcal A))\).
    By Lemma \ref{lem:OpenCoverChoices}, we can choose, for each \(n \in \omega\), \(A_n \in \mathcal A\)
    such that \[\mathcal N_{A_n} \subseteq \{\sigma(\mathscr U, n) : \mathscr U \in \mathcal O_X(\mathcal A) \}.\]
    Now, by the assumption, there is some \(A \in \mathcal A\) such that
    \(A \setminus \bigcap \mathcal N_{A_n} \neq \emptyset\) for each \(n \in \omega\).
    Hence, for each \(n \in \omega\), there is some \(\mathscr U_n \in \mathcal O_X(\mathcal A)\)
    such that \(A \setminus \sigma(\mathscr U_n,n) \neq \emptyset\).
    Note then that \[\{ \sigma(\mathscr U_n,n) : n \in \omega \} \not\in \mathcal O_X(\mathcal A).\]
    That is, \(\sigma\) is not winning.
\end{proof}

\begin{theorem} \label{thm:MarkovOmega}
    Let \(X\) be a space.
    \begin{enumerate}[label=(\arabic*),ref=(\arabic*)]
        \item  \label{thm:MarkovOmegaMenger}
        The following are equivalent.
        \begin{enumerate}
            \item 
            \(X\) is Markov \(\omega\)-Menger.
            \item
            \(X\) is Markov Menger.
            \item
            \(X\) is \(\sigma\)-relatively compact.
        \end{enumerate}
        \item \label{thm:MarkovOmegaRothberger}
        The following are equivalent.
        \begin{enumerate}
            \item 
            \(X\) is Markov \(\omega\)-Rothberger.
            \item
            \(X\) is Markov Rothberger.
            \item
            \(X\) is topologically countable.
        \end{enumerate}
    \end{enumerate}
\end{theorem}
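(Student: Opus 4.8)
The plan is to treat the two parts separately, since the Rothberger statement \ref{thm:MarkovOmegaRothberger} concerns the single-selection game and can be routed entirely through Lemma \ref{lem:NearlyCofinal}, whereas the Menger statement \ref{thm:MarkovOmegaMenger} concerns \(\mathsf G_{\mathrm{fin}}\) and so must be argued by hand. In both cases the key is to recognize the relevant cover classes as instances of \(\mathcal O_X(\mathcal A)\): taking \(\mathcal A\) to be the singletons of \(X\) gives \(\mathcal O_X(\mathcal A)=\mathcal O_X\), taking \(\mathcal A=[X]^{<\omega}\) gives \(\mathcal O_X(\mathcal A)=\Omega_X\), and taking \(\mathcal A\) to be the relatively compact sets recovers \(\sigma\)-relative compactness.

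For \ref{thm:MarkovOmegaRothberger}, I would first apply Lemma \ref{lem:NearlyCofinal} with \(\mathcal A\) equal to the singletons: since \(\mathcal O_X(\mathcal A)=\mathcal O_X\), this identifies Markov Rothberger with \(\widehat{\mathrm{cof}}_X(\mathcal A,\mathcal A)\le\omega\), which is exactly topological countability by the observation recorded after the definition of \(\widehat{\mathrm{cof}}\); this gives (b)\(\iff\)(c) at once. Applying Lemma \ref{lem:NearlyCofinal} instead with \(\mathcal A=[X]^{<\omega}\), so that \(\mathcal O_X(\mathcal A)=\Omega_X\), identifies Markov \(\omega\)-Rothberger with \(\widehat{\mathrm{cof}}_X([X]^{<\omega},[X]^{<\omega})\le\omega\). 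To finish (a)\(\iff\)(c) I would prove the set-theoretic identity \(\bigcap\mathcal N_F=\bigcup_{x\in F}\bigcap\mathcal N_x\) for every finite \(F\subseteq X\) (the nontrivial inclusion \(\subseteq\) follows by forming \(U=\bigcup_{x\in F}U_x\) from neighborhoods \(U_x\) omitting a candidate point, the reverse is immediate). With this identity, a witness \(\{x_n\}\) for topological countability yields witnessing finite sets \(\{F_m\}\) by taking the finite subsets of \(\{x_n\}\), and conversely the union \(\bigcup_m F_m\) of a witness for the finite-set condition is a countable point set witnessing topological countability.

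For \ref{thm:MarkovOmegaMenger} I would show each of (a) and (b) equivalent to (c). The implications from (c) are constructive: given \(X=\bigcup_n A_n\) with each \(A_n\) relatively compact (and, after replacing \(A_n\) by \(A_0\cup\dots\cup A_n\), increasing), a Markov strategy for the ordinary Menger game returns in round \(n\) a finite subfamily of the given cover covering \(A_n\), which is trivially winning, giving (c)\(\implies\)(b). For (c)\(\implies\)(a) the response must be an \(\omega\)-cover, so I would invoke Lemma \ref{lemma:ProductOfRelativelyCompact} to see each power \(A_n^k\) is relatively compact in \(X^k\); fixing a bijection \(\omega\to\omega\times\omega\), \(m\mapsto(n,k)\), the round-\(m\) response to an \(\omega\)-cover \(\mathscr U\) is \(\{U\in\mathscr U:U^k\in\mathscr G\}\), where \(\mathscr G\subseteq\{U^k:U\in\mathscr U\}\) is a finite cover of \(A_n^k\), and since any finite \(F\subseteq X\) lies in some \(A_n\) with \(|F|=k\), the resulting union is an \(\omega\)-cover. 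For the reverse implications, given a winning Markov \(\sigma\) I would set \(A_n=\{x\in X: x\in\bigcup\sigma(\mathscr U,n)\text{ for every cover }\mathscr U\text{ in the relevant class}\}\); the winning property forces \(\{A_n\}\) to cover \(X\), and each \(A_n\) is relatively compact because \(\sigma(\mathscr U,n)\) is a finite subfamily covering it. In the Markov Menger case this is direct; in the Markov \(\omega\)-Menger case I would first replace an arbitrary open cover \(\mathscr V\) by its closure under finite unions \(\mathscr V^\ast\in\Omega_X\) before feeding it to \(\sigma\), and then unpack each member of \(\sigma(\mathscr V^\ast,n)\) into finitely many members of \(\mathscr V\) to obtain the required finite subfamily of \(\mathscr V\).

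The main obstacle I anticipate is the implication (c)\(\implies\)(a): extracting an honest \(\omega\)-cover, rather than a mere open cover, from relative compactness. This is where the finite-power reformulation of \(\omega\)-covers, the productivity of relative compactness (Lemma \ref{lemma:ProductOfRelativelyCompact}), and the bookkeeping bijection \(\omega\to\omega\times\omega\) are all needed simultaneously; everything else is either a direct application of Lemma \ref{lem:NearlyCofinal} or a routine relative-compactness argument. A minor point to handle carefully is that the closure-under-finite-unions trick used in (a)\(\implies\)(c) produces a legitimate \(\omega\)-cover only under the convention that \(X\) may belong to a cover, which is consistent with the conventions adopted in this paper.
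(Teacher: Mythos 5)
Your proposal is correct and follows essentially the same route as the paper's proof: part \ref{thm:MarkovOmegaRothberger} is routed through Lemma \ref{lem:NearlyCofinal} (with \(\mathcal A\) the singletons and \(\mathcal A=[X]^{<\omega}\)) together with the identity \(\bigcap\mathcal N_F=\bigcup_{x\in F}\bigcap\mathcal N_x\), and part \ref{thm:MarkovOmegaMenger} is handled by the finite-power construction via Lemma \ref{lemma:ProductOfRelativelyCompact} for (c)\(\Rightarrow\)(a) and the closure-under-finite-unions unfolding for (a)\(\Rightarrow\)(b). The only divergence is cosmetic: you reprove the equivalence of Markov Menger with \(\sigma\)-relative compactness via the sets \(A_n=\bigcap_{\mathscr U}\bigcup\sigma(\mathscr U,n)\) (exactly the device of Lemma \ref{lem:BasicMarkovRelCompact}), where the paper simply cites \cite{ClontzApplicationsOfLimitedInformation}, and your bookkeeping for (c)\(\Rightarrow\)(a) uses a pairing bijection where the paper handles all pairs \(j,\ell\leq n\) in round \(n\).
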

\begin{proof}
    We start by addressing \ref{thm:MarkovOmegaMenger}.
    The fact that \(X\) is \(\sigma\)-relatively compact if and only if \(X\) is Markov Menger
    is \cite[Cor. 4.7]{ClontzApplicationsOfLimitedInformation}.
    Also note that the \(X\) being Markov \(\omega\)-Menger immediately implies that \(X\)
    is Markov Menger by considering the closure under finite unions of open covers of \(X\).
    So, to finish this portion of the proof, we show that being \(\sigma\)-relatively compact
    guarantees that the space is Markov \(\omega\)-Menger.
    So suppose \(X\) is \(\sigma\)-relatively compact.
    By Lemma \ref{lemma:ProductOfRelativelyCompact}, \(X^{n+1}\) is \(\sigma\)-relatively compact
    for each \(n \in \omega\).
    So let \(\{ A_{n,k} : k \in \omega \}\) witness the \(\sigma\)-relative compactness of \(X^{n+1}\).
    Given \(\mathscr U \in \Omega_X\) and \(j \leq n\), let \(\sigma_j(\mathscr U,n) \in [\mathscr U]^{<\omega}\)
    be such that \[\bigcup_{\ell=0}^n A_{j,\ell} \subseteq \bigcup \{ U^{j+1} : U \in \sigma_j(\mathscr U,n) \}.\]
    Then, let \[\sigma(\mathscr U,n) = \bigcup_{j=0}^n \sigma_j(\mathscr U, n).\]
    Observe that \(\sigma(\mathscr U, n) \in \left[ \mathscr U \right]^{<\omega}\).
    This completes the definition of \(\sigma\).
    To see that \(\sigma\) is winning, let \(\langle \mathscr U_n : n \in \omega\rangle\)
    be a sequence of \(\omega\)-covers of \(X\) and consider \(\{x_0,\ldots,x_n\} \subseteq X\).
    Note that \(\langle x_0, \ldots, x_n \rangle \in X^{n+1}\).
    Then, there is some \(k \in \omega\) such that
    \[\langle x_0, \ldots , x_n \rangle \in A_{n,k}.\]
    Let \(N = \max \{n,k\}\) and notice that
    \[\langle x_0, \ldots , x_n \rangle \in A_{n,k} \subseteq \bigcup_{j=0}^N A_{n,j}
    \subseteq \bigcup \{ U^{n+1} : U \in \sigma_n(\mathscr U_N,N)\}.\]
    Hence, there must be some \(U \in \sigma_n(\mathscr U_N,N)\) such that \(\langle x_0, \ldots, x_n \rangle \in U^{n+1}\).
    Observe that \[\{x_0,\ldots,x_n\} \subseteq U \in \sigma(\mathscr U_N,N).\]

    Now for \ref{thm:MarkovOmegaRothberger}.
    Note that the equivalence of being Markov Rothberger and topologically countable follows
    immediately from Lemma \ref{lem:NearlyCofinal}.
    The equivalence of being Markov \(\omega\)-Rothberger and topologically countable also
    follows from Lemma \ref{lem:NearlyCofinal} since the lemma asserts that being Markov \(\omega\)-Rothberger
    is equivalent to the condition that \(\widehat{\mathrm{cof}}_X([X]^{<\omega},[X]^{<\omega}) \leq \omega\).
    We need only verify this last condition is equivalent to being topologically countable.

    Suppose \(X\) is topologically countable and let \(\{x_n:n\in\omega\}\) be the witnessing set.
    Note that \(F_n := \{ x_k : k \leq n \}\) constructs a countable set of finite subsets of \(X\).
    Consider any other \(F \in [X]^{<\omega}\).
    For each \(y \in F\), let \(n_y \in \omega\) be such that \(y \in \bigcap \mathcal N_{x_{n_y}}\).
    Let \(M = \max \{ n_y : y \in F \}\) and observe that
    \[F \subseteq \bigcap \mathcal N_{F_M}.\]
    Hence, \(\widehat{\mathrm{cof}}_X([X]^{<\omega},[X]^{<\omega}) \leq \omega\).

    Finally, assume that \(\widehat{\mathrm{cof}}_X([X]^{<\omega},[X]^{<\omega}) \leq \omega\)
    and let \(\{ F_n : n \in \omega \}\) be the witnessing set of finite subsets of \(X\).
    Now, since \(\bigcup_{n\in\omega} F_n\) is countable, we can set \(\{ x_n : n \in \omega \} = \bigcup_{n\in\omega} F_n\).
    To see that this set satisfies the definition of topological countability, let \(x \in X\) and note
    that \(\{x\}\) is a finite subset of \(X\).
    Then there is some \(n \in \omega\) such that \(\{x\} \subseteq \bigcap \mathcal N_{F_n}\).
    To see that there must be some \(y \in F_n\) such that \(x \in \bigcap \mathcal N_y\),
    suppose \(z \in X\) is such that, for each \(y \in F_n\), \(z \not\in \bigcap \mathcal N_y\).
    For each \(y\in F_n\), let \(U_y \in \mathcal N_y\) be such that \(z \not\in U_y\).
    Note then that \(U := \bigcup_{y\in F_n} U_y \in \mathcal N_{F_n}\) and that
    \(z \not\in U\).
    Hence, \(z \not\in \bigcap \mathcal N_{F_n}\).
    It follows that there must be some \(y\in F_n\) such that \(x \in \bigcap \mathcal N_y\).
    Thus, \(X\) is topologically countable.
\end{proof}
\begin{corollary} \label{cor:BasicMarkovOmega}
    Let \(X\) be a space.
    \begin{enumerate}
        \item
        If \(X\) is \(T_1\), then the following are equivalent:
        \begin{enumerate}
            \item 
            \(X\) is Markov \(\omega\)-Rothberger.
            \item
            \(X\) is Markov Rothberger.
            \item
            \(X\) is countable.
        \end{enumerate}
        \item 
        If \(X\) is regular, then the following are equivalent:
        \begin{enumerate}
            \item 
            \(X\) is Markov \(\omega\)-Menger.
            \item
            \(X\) is Markov Menger.
            \item
            \(X\) is \(\sigma\)-compact.
        \end{enumerate}
    \end{enumerate}
\end{corollary}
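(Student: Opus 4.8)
The plan is to reduce everything to Theorem \ref{thm:MarkovOmega}, which already establishes the equivalences (a)\,$\Leftrightarrow$\,(b) in both parts with no separation axiom: Markov $\omega$-Menger $\Leftrightarrow$ Markov Menger $\Leftrightarrow$ $\sigma$-relatively compact, and Markov $\omega$-Rothberger $\Leftrightarrow$ Markov Rothberger $\Leftrightarrow$ topologically countable. Thus the only work is to show that, under the stated separation axiom, the third characterization collapses to the sharper one: topological countability to countability in the $T_1$ case, and $\sigma$-relative compactness to $\sigma$-compactness in the regular case.

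For the first part, I would invoke Theorem \ref{thm:MarkovOmega}\ref{thm:MarkovOmegaRothberger} to replace (a) and (b) by topological countability, and then verify that topological countability is equivalent to countability for $T_1$ spaces. The forward direction is exactly the consequence recorded in Lemma \ref{lem:BasicT1}, namely that a topologically countable $T_1$ space is countable. The reverse direction needs no separation axiom: if $X = \{x_n : n \in \omega\}$, then each $x_n$ lies in every member of $\mathcal N_{x_n}$, so $x_n \in \bigcap \mathcal N_{x_n}$, and hence the enumerating set witnesses that $X = \bigcup_{n\in\omega} \bigcap \mathcal N_{x_n}$ is topologically countable.

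For the second part, I would likewise apply Theorem \ref{thm:MarkovOmega}\ref{thm:MarkovOmegaMenger} to replace (a) and (b) by $\sigma$-relative compactness, and then show that $\sigma$-relative compactness is equivalent to $\sigma$-compactness for regular spaces. The forward direction is Corollary \ref{cor:RegularSigmaRelCompac}. The reverse direction is again axiom-free: every compact subset of $X$ is relatively compact, since any open cover of $X$ restricts to an open cover of a given compact set, which then admits a finite subcover; consequently, a decomposition of $X$ into countably many compact sets is, a fortiori, a decomposition into countably many relatively compact sets.

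Since each implication is either a direct citation of an earlier result or a one-line observation, I do not anticipate any genuine obstacle. The only point requiring care is to record that the two ``easy'' directions, countable $\Rightarrow$ topologically countable and $\sigma$-compact $\Rightarrow$ $\sigma$-relatively compact, hold with no separation hypothesis, so that $T_1$ and regularity are each invoked only in the single direction where they are actually needed.
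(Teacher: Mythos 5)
Your proposal is correct and follows exactly the paper's route: the paper's proof is a one-line citation of Theorem \ref{thm:MarkovOmega}, Lemma \ref{lem:BasicT1}, and Corollary \ref{cor:RegularSigmaRelCompac}, which is precisely the reduction you carry out. The only difference is that you explicitly record the two trivial converses (countable implies topologically countable, $\sigma$-compact implies $\sigma$-relatively compact) that the paper leaves implicit, which is harmless and arguably an improvement in completeness.
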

\begin{proof}
    This follows immediately from Theorem \ref{thm:MarkovOmega},
    Lemma \ref{lem:BasicT1}, and Corollary \ref{cor:RegularSigmaRelCompac}.
\end{proof}

\begin{corollary}
    Let \(\mathcal P\) be any of the properties Markov Menger, Markov \(\omega\)-Menger, Markov Rothberger,
    or Markov \(\omega\)-Rothberger.
    If \(X\) and \(Y\) are both \(\mathcal P\), then \(X \times Y\) is, too.
\end{corollary}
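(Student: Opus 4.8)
The plan is to reduce each of the four cases to a productivity statement about the combinatorial characterization supplied by Theorem \ref{thm:MarkovOmega}. That theorem shows that Markov Menger and Markov \(\omega\)-Menger both coincide with \(\sigma\)-relative compactness, and that Markov Rothberger and Markov \(\omega\)-Rothberger both coincide with topological countability. Hence it suffices to prove that each of these two underlying properties is preserved by finite products, after which the stated corollary follows by invoking the appropriate equivalence for each of the four named properties.

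First I would handle the Menger cases. Suppose \(X = \bigcup_{n} A_n\) and \(Y = \bigcup_{m} B_m\), where each \(A_n\) is relatively compact in \(X\) and each \(B_m\) is relatively compact in \(Y\). By Lemma \ref{lemma:ProductOfRelativelyCompact}, each \(A_n \times B_m\) is relatively compact in \(X \times Y\), and since \(X \times Y = \bigcup_{n,m} A_n \times B_m\) is a countable union of such sets, \(X \times Y\) is \(\sigma\)-relatively compact. Theorem \ref{thm:MarkovOmega}\ref{thm:MarkovOmegaMenger} then yields both Markov Menger and Markov \(\omega\)-Menger for the product.

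For the Rothberger cases, the key observation is that for any point \((x,y) \in X \times Y\) one has \(\bigcap \mathcal N_x \times \bigcap \mathcal N_y \subseteq \bigcap \mathcal N_{(x,y)}\): given \(a \in \bigcap \mathcal N_x\), \(b \in \bigcap \mathcal N_y\), and an open \(W\) with \((x,y) \in W\), choose a basic rectangle \(U \times V \subseteq W\) containing \((x,y)\); then \(a \in U\) and \(b \in V\), so \((a,b) \in W\). Thus, writing \(X = \bigcup_n \bigcap \mathcal N_{x_n}\) and \(Y = \bigcup_m \bigcap \mathcal N_{y_m}\) for witnessing sets \(\{x_n : n \in \omega\}\) and \(\{y_m : m \in \omega\}\), every \((x,y)\) lies in some \(\bigcap \mathcal N_{x_n} \times \bigcap \mathcal N_{y_m} \subseteq \bigcap \mathcal N_{(x_n,y_m)}\). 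Consequently \(\{(x_n, y_m) : n,m \in \omega\}\) witnesses that \(X \times Y\) is topologically countable, and Theorem \ref{thm:MarkovOmega}\ref{thm:MarkovOmegaRothberger} finishes the Rothberger cases.

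The only step requiring a moment's care is the inclusion \(\bigcap \mathcal N_x \times \bigcap \mathcal N_y \subseteq \bigcap \mathcal N_{(x,y)}\), which is where the absence of separation axioms is felt: equality can fail, but this one-sided inclusion is exactly what the argument needs. Everything else is a routine countable-reindexing argument, so I expect no substantive obstacle beyond keeping the two characterizations straight.
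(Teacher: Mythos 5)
Your proposal is correct and follows essentially the same route as the paper: both reduce the four cases to the two characterizations of Theorem \ref{thm:MarkovOmega}, handle the Menger cases via Lemma \ref{lemma:ProductOfRelativelyCompact} and a countable union of rectangles, and handle the Rothberger cases by checking that \((x,y)\in\bigcap\mathcal N_{x_j}\times\bigcap\mathcal N_{y_k}\) forces \((x,y)\in\bigcap\mathcal N_{(x_j,y_k)}\) using basic rectangles. Your packaging of that last step as the one-sided inclusion \(\bigcap\mathcal N_x\times\bigcap\mathcal N_y\subseteq\bigcap\mathcal N_{(x,y)}\) is a clean restatement of the same computation the paper performs inline.
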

\begin{proof}
    By Theorem \ref{thm:MarkovOmega}, we have only two cases to consider.

    First, suppose both \(X\) and \(Y\) are topologically countable.
    Let \(\{x_n : n \in \omega\}\) and \(\{ y_n : n \in \omega \}\) be the witnessing sets for topological countability
    for \(X\) and \(Y\), respectively.
    Note that \(\{ (x_n,y_m) : n,m \in \omega \}\) is a countable subset of \(X\times Y\).
    For any \((x,y) \in X \times Y\), we can let \(j \in \omega\) and \(k \in \omega\) be such that
    \(x \in \bigcap \mathcal N_{x_j}\) and \(y \in \bigcap \mathcal N_{y_k}\).
    Now, consider any open neighborhood \(W\) of \((x_j,y_k)\).
    There are open sets \(U\) and \(V\) of \(X\) and \(Y\), respectively,
    with \(x_j \in U\), \(y_k \in V\), and \(U \times V \subseteq W\).
    Note then that \((x,y) \in U \times V \subseteq W\).
    Since \(W\) was arbitrary, \((x,y) \in \bigcap \mathcal N_{(x_j,y_k)}\).
    
    Secondly, assume both \(X\) and \(Y\) are \(\sigma\)-relatively compact.
    Let \(\{ A_n : n \in \omega \}\) and \(\{ B_n : n \in \omega \}\) be the witnessing families
    of relatively compact subsets of \(X\) and \(Y\), respectively.
    Note that \(\{ A_n \times B_m : n, m \in \omega \}\) is a countable set of relatively compact subsets
    of \(X\times Y\) by Lemma \ref{lemma:ProductOfRelativelyCompact}.
    Now, for any \((x,y) \in X \times Y\), we can let \(j \in \omega\) and \(k \in \omega\) be such that
    \(x \in A_j\) and \(y \in B_k\); hence, \((x,y) \in A_j \times B_k\).
\end{proof}

\subsection{Considering compact subsets} \label{subsection:CompactCovers}
Throughout the rest of the paper, we'll use \(\mathsf K(X)\) and \(\mathsf K_{\mathrm{rel}}(X)\) to denote the collections
of non-empty compact and non-empty relatively compact subsets of \(X\), respectively.

We start this section by addressing the preservation of various \(k\)-variant properties under finite powers.
To this end, we provide the following, as an easy application of The Wallace Theorem, which will allow us to restrict our attention
to basic covers when dealing with \(k\)-covers of product spaces.
\begin{lemma} \label{lem:BasicKCover}
    If \(\mathscr W\) is a \(k\)-cover of \(X \times Y\), then there is a \(k\)-cover of \(X\times Y\)
    consisting of rectangles \(U \times V\) that refines \(\mathscr W\).
\end{lemma}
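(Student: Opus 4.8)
The plan is to invoke the Wallace Theorem, which states that for compact sets $K \subseteq X$ and $L \subseteq Y$, any open set $W$ of $X \times Y$ with $K \times L \subseteq W$ contains a basic rectangle $U \times V$ with $K \times L \subseteq U \times V \subseteq W$. The goal is to produce, from the given $k$-cover $\mathscr W$, a refinement consisting of rectangles that is still a $k$-cover. First I would observe that a compact subset of $X \times Y$ need not itself be a product $K \times L$, but its two coordinate projections are compact by continuity of $\pi_X, \pi_Y$; so for any compact $C \subseteq X \times Y$, setting $K = \pi_X[C]$ and $L = \pi_Y[C]$ gives compact sets with $C \subseteq K \times L$. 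Since $\mathscr W$ is a $k$-cover and $K \times L$ is compact, there is some $W \in \mathscr W$ with $K \times L \subseteq W$, hence in particular $C \subseteq K \times L \subseteq W$.

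Now I would apply Wallace to the compact product $K \times L$ and the member $W$: there is a rectangle $U_C \times V_C$ with $K \times L \subseteq U_C \times V_C \subseteq W$. I would then form the collection
\[
\mathscr R = \left\{ U_C \times V_C : C \subseteq X \times Y \text{ is compact} \right\},
\]
making one such choice of rectangle for each compact $C$. By construction every element of $\mathscr R$ is a subset of some member of $\mathscr W$, so $\mathscr R$ refines $\mathscr W$. To verify $\mathscr R$ is a $k$-cover, take any compact $C \subseteq X \times Y$; the rectangle $U_C \times V_C$ associated to $C$ satisfies $C \subseteq K \times L \subseteq U_C \times V_C$, so $C$ is contained in a member of $\mathscr R$. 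This shows $\mathscr R \in \mathcal K_{X \times Y}$ and completes the argument.

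I expect the only genuine content to be the passage through projections: the subtlety is that an arbitrary compact $C$ is not a rectangle, so one cannot apply Wallace to $C$ directly, and the fix is to enlarge $C$ to the compact product $\pi_X[C] \times \pi_Y[C]$ before invoking Wallace. The remaining bookkeeping — verifying the refinement condition and the covering condition — is routine. One minor point to state carefully is that $\mathscr R$ consists of \emph{open} rectangles covering $X \times Y$ (each point lies in some compact set, e.g.\ a singleton, so the $k$-cover property already forces $\mathscr R$ to be a cover), which confirms $\mathscr R$ is a legitimate open cover and hence a bona fide $k$-cover.
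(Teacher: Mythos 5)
Your proposal is correct and follows essentially the same route as the paper's proof: enlarge each compact $C \subseteq X \times Y$ to the compact rectangle $\pi_X[C] \times \pi_Y[C]$, pick a member of $\mathscr W$ containing it, and shrink via the Wallace Theorem to an open rectangle. The only difference is cosmetic — you spell out the covering and refinement checks that the paper leaves implicit.
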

\begin{proof}
    Suppose \(\mathscr W\) is a \(k\)-cover of \(X \times Y\).
    For each \(K \in \mathsf K(X \times Y)\), let \(W_K \in \mathscr W\)
    be such that \(\pi_X[K] \times \pi_Y[K] \subseteq W_K\).
    By The Wallace Theorem \cite[Thm. 3.2.10]{Engelking}, there are open sets \(U_K \subseteq X\)
    and \(V_K \subseteq Y\) such that
    \[\pi_X[K] \times \pi_Y[K] \subseteq U_K \times V_K \subseteq W_K.\]
    Hence, \(\{ U_K \times V_K : K \in \mathsf K(X \times Y) \}\) is a \(k\)-cover
    of \(X \times Y\) which consists of rectangles and refines \(\mathscr W\).
\end{proof}
In the case of finite powers, we can refine \(k\)-covers of the finite product space with cubes.
\begin{lemma}[{\cite[Lemma 3]{AppskcoversI}}] \label{lem:kCubeRefinement}
    If \(\mathscr W\) is a \(k\)-cover of \(X^n\) for a positive integer \(n\),
    then there is a \(k\)-cover \(\mathscr U\) of \(X\) with the property that
    \(\{ U^n : U \in \mathscr U \}\) is a \(k\)-cover of \(X^n\) which refines \(\mathscr W\).
\end{lemma}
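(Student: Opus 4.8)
The plan is to build \(\mathscr U\) with one member per nonempty compact subset of \(X\), using the Wallace Theorem to convert a member of \(\mathscr W\) that swallows a cube \(K^n\) into an honest cube \(U^n\) lying inside it.

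First I would fix \(K \in \mathsf K(X)\) and observe that \(K^n = K \times \cdots \times K\) is compact in \(X^n\); since \(\mathscr W\) is a \(k\)-cover there is \(W_K \in \mathscr W\) with \(K^n \subseteq W_K\). Then I would apply the finite-product form of the Wallace Theorem (obtained by iterating \cite[Thm. 3.2.10]{Engelking}, just as in the proof of Lemma \ref{lem:BasicKCover}) to get open sets \(U_1, \ldots, U_n \subseteq X\) with \(K^n \subseteq U_1 \times \cdots \times U_n \subseteq W_K\), and set \(U_K = \bigcap_{i=1}^n U_i\). The key observation is that, since \(K\) is nonempty, for any \(x \in K\) and any index \(i\) one can pad the remaining coordinates with points of \(K\) to form a point of \(K^n\) whose \(i\)th coordinate is \(x\); as this point lies in \(U_1 \times \cdots \times U_n\), it forces \(x \in U_i\). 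Thus \(K \subseteq U_i\) for every \(i\), whence \(K \subseteq U_K\) and \(U_K^n \subseteq U_1 \times \cdots \times U_n \subseteq W_K\).

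Next I would set \(\mathscr U = \{ U_K : K \in \mathsf K(X) \}\). Since every singleton is compact, \(\mathscr U\) covers \(X\); and since each compact \(C \subseteq X\) satisfies \(C \subseteq U_C\), the family \(\mathscr U\) is a \(k\)-cover of \(X\). The refinement requirement is then immediate, because each cube \(U_K^n \subseteq W_K \in \mathscr W\) by construction.

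The last thing to verify, and the step where I expect the only genuine care is needed, is that \(\{ U_K^n : K \in \mathsf K(X) \}\) is a \(k\)-cover of \(X^n\). Given \(C \in \mathsf K(X^n)\), each coordinate projection \(\pi_i[C]\) is compact in \(X\), so \(K := \bigcup_{i=1}^n \pi_i[C]\) is a nonempty compact subset of \(X\); every point of \(C\) has all of its coordinates in \(K\), so \(C \subseteq K^n \subseteq U_K^n\), exhibiting the desired member. The main (mild) obstacle will be marshaling this projection argument together with the observation that contracting the box \(U_1 \times \cdots \times U_n\) down to the single cube \(U_K^n\) still captures \(K^n\); both hinge essentially on \(K\) being nonempty.
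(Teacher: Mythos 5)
Your argument is correct and is exactly the standard Wallace-theorem argument: the paper itself gives no proof of this lemma (it is quoted from \cite[Lemma 3]{AppskcoversI}), but your construction mirrors the technique the paper uses for Lemma \ref{lem:BasicKCover} and the cited source. The two points you flag as needing care --- that padding coordinates from the nonempty \(K\) forces \(K \subseteq U_i\) for each \(i\), and that the diagonal compact \(K = \bigcup_{i=1}^n \pi_i[C]\) satisfies \(C \subseteq K^n\) --- are handled correctly, so there is nothing to add.
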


\begin{theorem} \label{thm:kLindProd}
    Every finite power of a \(k\)-Lindel\"{o}f space is \(k\)-Lindel\"{o}f.
\end{theorem}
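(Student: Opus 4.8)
The plan is to pull a $k$-cover of $X^n$ back to a $k$-cover of $X$ using the cube refinement of Lemma \ref{lem:kCubeRefinement}, extract a countable $k$-subcover of $X$ by $k$-Lindel\"{o}fness, and then push it forward to $X^n$. No induction is needed, since the cube lemma handles an arbitrary positive integer $n$ in one stroke.

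First I would fix a positive integer $n$ and a $k$-cover $\mathscr W$ of $X^n$. By Lemma \ref{lem:kCubeRefinement}, there is a $k$-cover $\mathscr U$ of $X$ such that $\{U^n : U \in \mathscr U\}$ is a $k$-cover of $X^n$ refining $\mathscr W$. Since $X$ is $k$-Lindel\"{o}f, I would then extract a countable $\mathscr U_0 \subseteq \mathscr U$ which is itself a $k$-cover of $X$.

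The heart of the argument is the observation that cubing a $k$-cover of $X$ yields a $k$-cover of $X^n$: given a compact $K \subseteq X^n$, each projection $\pi_i[K]$ is compact in $X$, so $L := \bigcup_{i<n} \pi_i[K]$ is a finite union of compact sets and hence compact. Choosing $U \in \mathscr U_0$ with $L \subseteq U$ gives $K \subseteq \prod_{i<n} \pi_i[K] \subseteq U^n$, so $\{U^n : U \in \mathscr U_0\}$ is a countable $k$-cover of $X^n$. Finally, using the refinement I would select for each $U \in \mathscr U_0$ some $W_U \in \mathscr W$ with $U^n \subseteq W_U$; then $\{W_U : U \in \mathscr U_0\}$ is a countable subfamily of $\mathscr W$, and it is a $k$-cover because any compact $K$ lies inside some $U^n \subseteq W_U$.

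The only step carrying any content is the claim that cubing a countable $k$-cover of $X$ produces a $k$-cover of $X^n$, which rests entirely on the compactness of $\bigcup_{i<n} \pi_i[K]$; the reduction to $X$ and the lift back to $\mathscr W$ are routine bookkeeping once Lemma \ref{lem:kCubeRefinement} is in hand.
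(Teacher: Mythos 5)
Your proposal is correct and follows essentially the same route as the paper's proof: apply Lemma \ref{lem:kCubeRefinement} to refine by cubes, extract a countable $k$-subcover of $X$, and transfer it back to $X^n$ via the compactness of $\bigcup_{i<n}\pi_i[K]$. No substantive differences.
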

\begin{proof}
    Let \(n\) be a positive integer and suppose \(\mathscr W\) is a \(k\)-cover of \(X^n\).
    By Lemma \ref{lem:kCubeRefinement}, we can let \(\mathscr U \in \mathcal K_X\) be such that
    \(\{U^n : U \in \mathscr U\}\) is a \(k\)-cover of \(X^n\) which refines \(\mathscr W\).
    Since \(X\) is assumed to be \(k\)-Lindel\"{o}f, we can choose \(\{U_k : k \in \omega\} \subseteq \mathscr U\)
    such that \(\{U_k:k\in\omega\} \in \mathcal K_X\).
    For each \(k\in\omega\), we can choose \(W_k \in \mathscr W\) such that \(U_k^n \subseteq W_k\).
    To finish the proof, we verify that \(\{ W_k : k \in \omega \}\) is a \(k\)-cover of \(X^n\).
    So let \(K \subseteq X^n\) be compact and note that \(L = \bigcup_{j=1}^n \pi_j[K]\) is a compact subset of \(X\).
    Then there must be \(k \in \omega\) such that \(L \subseteq U_k\).
    It follows that \(K \subseteq L^n \subseteq U_k^n \subseteq W_k\).
\end{proof}

The proof of Theorem \ref{thm:kLindProd} presented here is identical in spirit to those of the following results
from \cite{AppskcoversI}.
\begin{theorem} \label{thm:kProductive}
    Let \(X\) be a space.
    \begin{enumerate}[label=(\arabic*),ref=(\arabic*)]
        \item \label{kMengerProductive}
        If \(X\) is \(k\)-Menger, then every finite power of \(X\) is, too (\cite[Thm. 6]{AppskcoversI}).
        \item \label{kRothbergerProductive}
        If \(X\) is \(k\)-Rothberger, then every finite power of \(X\) is, too (\cite[Thm. 5]{AppskcoversI}).
    \end{enumerate}
\end{theorem}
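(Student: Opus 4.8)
The plan is to mirror the proof of Theorem \ref{thm:kLindProd}, replacing its single-cover argument with a round-by-round application of the relevant selection principle and leveraging the cube-refinement Lemma \ref{lem:kCubeRefinement}. Fix a positive integer \(n\); it suffices to treat \(X^n\). The whole point of Lemma \ref{lem:kCubeRefinement} is that every \(k\)-cover of \(X^n\) can be traded for a \(k\)-cover \emph{of \(X\)} whose cubes still cover \(X^n\) and refine the original, which is exactly what lets us push the selection principle down to \(X\), where the hypothesis lives.

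First I would handle the \(k\)-Menger case \ref{kMengerProductive}. Given a sequence \(\langle \mathscr W_m : m \in \omega \rangle\) of \(k\)-covers of \(X^n\), I apply Lemma \ref{lem:kCubeRefinement} in each round \(m\) to obtain \(\mathscr U_m \in \mathcal K_X\) such that \(\{ U^n : U \in \mathscr U_m \}\) is a \(k\)-cover of \(X^n\) refining \(\mathscr W_m\). Since \(X\) is \(k\)-Menger, applying \(\mathsf S_{\mathrm{fin}}(\mathcal K_X, \mathcal K_X)\) to \(\langle \mathscr U_m : m \in \omega \rangle\) yields finite sets \(\mathscr F_m \in [\mathscr U_m]^{<\omega}\) with \(\bigcup_m \mathscr F_m \in \mathcal K_X\).

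The crux is then to transport this \(k\)-cover of \(X\) back up to a \(k\)-cover of \(X^n\) built from members of the \(\mathscr W_m\). For each \(m\) and each \(U \in \mathscr F_m\), I use the refinement to choose \(W_{m,U} \in \mathscr W_m\) with \(U^n \subseteq W_{m,U}\), and set \(\mathscr G_m = \{ W_{m,U} : U \in \mathscr F_m \} \in [\mathscr W_m]^{<\omega}\). To see \(\bigcup_m \mathscr G_m\) is a \(k\)-cover of \(X^n\), take compact \(K \subseteq X^n\); exactly as in Theorem \ref{thm:kLindProd}, \(L := \bigcup_{j=1}^n \pi_j[K]\) is a compact subset of \(X\), so there is \(U \in \bigcup_m \mathscr F_m\) with \(L \subseteq U\), whence \(K \subseteq L^n \subseteq U^n \subseteq W_{m,U}\) for the appropriate \(m\). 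This witnesses \(\mathsf S_{\mathrm{fin}}(\mathcal K, \mathcal K)\) on \(X^n\).

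The \(k\)-Rothberger case \ref{kRothbergerProductive} is identical in structure with single selections: after the same cube-refinement step, apply \(\mathsf S_1(\mathcal K_X, \mathcal K_X)\) to get \(U_m \in \mathscr U_m\) with \(\{ U_m : m \in \omega \} \in \mathcal K_X\), choose \(W_m \in \mathscr W_m\) with \(U_m^n \subseteq W_m\), and verify \(\{ W_m : m \in \omega \}\) is a \(k\)-cover of \(X^n\) by the same projection argument. I expect no genuine obstacle beyond the bookkeeping of the refinement: the one thing to watch is that each selected cube is correctly paired with a member of the original cover \(\mathscr W_m\), so that the resulting collection is a legitimate finite (resp. single) choice from each \(\mathscr W_m\) and hence a valid play for the selection principle on \(X^n\).
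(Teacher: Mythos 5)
Your proof is correct and is essentially the argument the paper intends: the paper cites \cite{AppskcoversI} for this theorem and explicitly remarks that the proof of Theorem \ref{thm:kLindProd} is ``identical in spirit'' to those cited proofs, which is precisely the cube-refinement-plus-projection argument you carry out round by round. The bookkeeping you flag (pairing each selected cube with a witness in the original cover \(\mathscr W_m\)) is handled correctly, so there is nothing to add.
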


Before we prove the analogous results to Theorem \ref{thm:DiasScheepers} for \(k\)-covers, we establish a combinatorial lemma that will be useful.
The generality of Lemma \ref{lem:GeneralComboTool} is such that it may be adapted to
Lemma \ref{comboTool} for use in the finite-selection context.

Throughout, we'll use \(\concat\) for the ordered concatenation of words.
If \(s\) is a word and \(x\) is an element of the alphabet, we'll use
\(s \concat x\) in place of \(s \concat \langle x \rangle\), when there is little doubt of ambiguity.

\begin{lemma} \label{lem:GeneralComboTool}
    Suppose \(s : \omega \to \omega^{<\omega}\) is an injection where \(s_0 = \langle \rangle\).
    Suppose further that we have a function \(r : \{ s_n : n \in \omega \} \to [\omega]^{<\omega}\) such that
    \(r(s_n) \subseteq \mathrm{range}(s_n)\) and, for each \(m \geq 1\),
    \(\{ n \in \omega : r(s_n) \subseteq m \}\) is infinite.
    Then there is a bijection \(\beta : \omega^2 \to \omega\) such that, for each \(n \in \omega\),
    \(\langle \beta(n,k) : k \in \omega \rangle\) is strictly increasing
    and \(r(s_n) \subseteq \beta(n,0)\).
\end{lemma}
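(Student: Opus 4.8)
The plan is to construct \(\beta\) by recursion, filling the ``slots'' \((n,k) \in \omega^2\) one at a time and always assigning the least admissible unused value; the entire content of the lemma then reduces to checking that this greedy assignment is onto, and that is precisely where the hypothesis on \(\{n : r(s_n) \subseteq m\}\) is spent.

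First I would reformulate the target. Asking for a bijection \(\beta : \omega^2 \to \omega\) with each row \(\langle \beta(n,k) : k \in \omega\rangle\) strictly increasing and \(r(s_n) \subseteq \beta(n,0)\) is the same as partitioning \(\omega\) into infinite sets \(R_n\) (the ranges of the rows) with \(r(s_n) \subseteq \min R_n\); the increasing enumeration of \(R_n\) then recovers row \(n\). I will build this partition implicitly through the recursion rather than naming the \(R_n\). Fix once and for all an enumeration \(\langle (n_i, k_i) : i \in \omega\rangle\) of \(\omega^2\) in which, for every fixed \(n\), the pairs \((n,0),(n,1),(n,2),\dots\) occur in that order (the usual diagonal enumeration works). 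I then define \(\beta(n_i,k_i)\) by recursion on \(i\): having assigned values to all earlier slots, let \(\beta(n_i,k_i)\) be the least \(v \in \omega\) that (a) has not yet been used, (b) satisfies \(r(s_{n_i}) \subseteq v\), and (c) if \(k_i > 0\), satisfies \(v > \beta(n_i, k_i - 1)\)---the latter value being already defined since \((n_i, k_i - 1)\) precedes \((n_i, k_i)\) in the enumeration. Only finitely many values are used before stage \(i\), so such a \(v\) always exists and \(\beta\) is total. Clauses (a), (c), and (b) respectively make \(\beta\) injective, strictly increasing in \(k\) along each row, and force \(r(s_n) \subseteq \beta(n,0)\), so the only thing left to check is surjectivity.

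For surjectivity, suppose toward a contradiction that \(v^\ast\) is the least element of \(\omega\) never used, and let \(i_0\) be a stage by which every value below \(v^\ast\) has been used. Then for every \(i \geq i_0\) the least unused value is exactly \(v^\ast\), so at any first-row slot \((n_i, 0)\) with \(i \geq i_0\)---where clause (c) is vacuous---the recursion would assign \(v^\ast\) unless \(r(s_{n_i}) \not\subseteq v^\ast\); since \(v^\ast\) is never used, I conclude \(r(s_n) \not\subseteq v^\ast\) for every row \(n\) whose slot \((n,0)\) occurs at stage \(\geq i_0\). Only finitely many slots precede stage \(i_0\), so this says \(\{n : r(s_n) \subseteq v^\ast\}\) is finite. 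If \(v^\ast \geq 1\) this contradicts the hypothesis directly. If \(v^\ast = 0\) then \(i_0 = 0\), and the offending conclusion would include the slot \((0,0)\), forcing \(r(s_0) \neq \emptyset\); but \(s_0 = \langle\rangle\) gives \(r(s_0) \subseteq \mathrm{range}(s_0) = \emptyset\), a contradiction.

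The one genuinely delicate point is this surjectivity argument: everything else is bookkeeping forced by clauses (a)--(c), whereas isolating the first-row slots \((n,0)\) and pitting ``only finitely many slots come before \(i_0\)'' against the infinitude of \(\{n : r(s_n) \subseteq v^\ast\}\) is exactly the maneuver that makes the hypothesis do its work. It is also worth noting that the constraint \(r(s_n) \subseteq \mathrm{range}(s_n)\) enters only through the normalization \(s_0 = \langle\rangle\), which supplies the single empty value of \(r\) needed to place the value \(0\); the infinitude hypothesis handles every positive \(v^\ast\).
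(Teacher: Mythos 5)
Your proof is correct, but it takes a genuinely different route from the paper's. The paper first fixes an explicit template bijection \(\beta^\ast:\omega^2\to\omega\) built from prime powers (row \(n\geq 1\) is \(\langle p_n^k\rangle_k\), row \(0\) the increasing enumeration of the non-prime-powers), which already has strictly increasing rows and strictly increasing initial points; the combinatorial content is then concentrated in a greedy permutation \(\gamma\) of the row indices, chosen so that \(r(s_n)\subseteq\beta^\ast(\gamma_n,0)\), and the work is to prove \(\gamma\) surjective by induction on the target value. You instead build \(\beta\) cell by cell, assigning the least unused admissible value along a diagonal enumeration of \(\omega^2\), and prove surjectivity by a least-never-used-value argument: if \(v^\ast\) were omitted, then from some stage on every slot \((n,0)\) would have to grab \(v^\ast\) unless \(r(s_n)\not\subseteq v^\ast\), making \(\{n:r(s_n)\subseteq v^\ast\}\) finite, against the hypothesis for \(v^\ast\geq 1\) and against \(r(s_0)=\emptyset\) for \(v^\ast=0\). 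Both arguments are greedy and both spend the infinitude hypothesis on exactly one surjectivity claim, but yours dispenses with the prime-power scaffolding and the two-stage factorization, at the cost of verifying row-monotonicity, the initial-point condition, and injectivity simultaneously inside one recursion; the paper's factorization through \(\beta^\ast\) keeps those three properties automatic and isolates all the work in the single map \(\gamma\). Your closing observation that \(r(s_n)\subseteq\mathrm{range}(s_n)\) is used only via \(r(s_0)=\emptyset\) matches the paper, where that same fact gives \(\gamma_0=0\). One cosmetic point: ``\(i_0=0\)'' in the case \(v^\ast=0\) should read ``we may take \(i_0=0\),'' but nothing hinges on it.
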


\begin{proof}
    For \(n \in \mathbb Z^+\), let \(p_n\) be the \(n^{\mathrm{th}}\) prime and set \(\beta^\ast(n,k) = p_n^k\).
    Then let \(\langle \beta^\ast(0,k) : k \in \omega \rangle\) be the increasing enumeration
    of \[\omega \setminus \{ p^n : p \text{ is prime and } n \in \mathbb Z^+ \}.\]
    Note that, for each \(n \in \omega\), \(\langle \beta^\ast(n,k) : k \in \omega \rangle\) is strictly increasing.
    Moreover, the sequence of initial points, \(\langle \beta^\ast(n,0) : n \in \omega \rangle\), is also strictly increasing.

    Define \(\gamma : \omega \to \omega\) recursively as follows.
    Suppose, for \(n \in \omega\), that \(\langle \gamma_k : k < n \rangle\) is defined.
    Let
    \[\gamma_n = \min \{ \lambda \in \omega \setminus \{ \gamma_k : k < n \} : r(s_n) \subseteq \beta^\ast(\lambda, 0) \}.\]
    
    The claim is that \(n \mapsto \gamma_n\), \(\omega\to\omega\), is a bijection.
    \(\gamma\) is clearly injective.
    To show surjectivity, first note that \(\gamma_0 = 0\).
    We proceed by induction.
    Let \(m \geq 1\) and suppose that \(\{ k \in \omega : k < m \} \subseteq \mathrm{range}(\gamma)\).
    Then let
    \[M = \min\{ \lambda \in \omega : \{ k \in \omega : k < m \} \subseteq \{ \gamma_j : j < \lambda \} \}.\]
    
    If \(m \in \{ \gamma_j : j < M \}\), we have nothing to show.
    Otherwise, we can consider
    \[A = \{ \lambda \in \omega : \lambda \geq M \wedge r(s_\lambda) \subseteq \beta^\ast(m,0) \}.\]
    Since \(m \geq 1\), \(\beta^\ast(m,0) > 0\).
    Hence, \(A \neq \emptyset\) since \(\{ \lambda \in \omega : r(s_\lambda) \subseteq \beta^\ast(m,0) \}\)
    is infinite.
    Then we can let \(\ell = \min A\).
    
    If \(m \in \{ \gamma_j : j < \ell \}\), we are done.
    Otherwise, we claim that \(\gamma_\ell = m\).
    By definition,
    \[\gamma_\ell = \min \{ \lambda \in \omega \setminus \{ \gamma_j : j < \ell \} : r(s_\ell) \subseteq \beta^\ast(\lambda, 0) \}.\]
    Since \(\ell \in A\), we know that \(r(s_\ell) \subseteq \beta^\ast(m,0)\).
    Moreover, since \(m \not\in \{ \gamma_j : j < \ell \}\), we see that
    \[m \in \{ \lambda \in \omega \setminus \{ \gamma_j : j < \ell \} : r(s_\ell) \subseteq \beta^\ast(\lambda, 0) \}.\]
    So \(\gamma_\ell \leq m\).
    
    Now, for \(k < m\), notice that there exists \(j < M\) so that \(\gamma_j = k\) by the inductive hypothesis.
    Since \(\gamma\) is injective, \(\gamma_\ell \neq k\).
    Since this is true for any \(k < m\), we have that \(\gamma_\ell = m\).

    Finally, define \(\beta : \omega^2 \to \omega\) by the rule \(\beta(n,k) = \beta^\ast(\gamma_n,k)\).
    This completes the proof.
\end{proof}

The proof of the finite productivity of the strategically \(k\)-Rothberger property
can be seen as an exercise toward the proof of the finite-selection version.
The idea of the proof is identical in spirit to the analogous result in \cite{DiasScheepers};
the reason it is not a direct application of the results of that paper is that \(\mathbb K(X \times Y)\)
and \(\mathbb K(X) \times \mathbb K(Y)\) are, in general, topologically distinct objects.
Recall that \(\mathbb K(X)\) denotes the set \(\mathsf K(X)\) endowed with the Vietoris topology.
See \cite{MichaelSubsets} for more on this topological space.
\begin{theorem} \label{StratKRothProductive}
    If \(X\) and \(Y\) are strategically \(k\)-Rothberger, then \(X \times Y\) is strategically \(k\)-Rothberger.
    In other words, the property of being strategically \(k\)-Rothberger is finitely productive.
\end{theorem}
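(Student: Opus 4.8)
The plan is to mimic the proof of Theorem~\ref{thm:DiasScheepers}\ref{DiasScheepersRothberger}, while accounting for the fact that a \(k\)-cover of \(X\times Y\) cannot be disassembled into a \(k\)-cover of \(X\) together with a \(k\)-cover of \(Y\) the way an open cover of a product of the \emph{hyperspaces} would be; this is the content of the observation that \(\mathbb K(X\times Y)\) and \(\mathbb K(X)\times\mathbb K(Y)\) are topologically distinct. First I would invoke Lemma~\ref{lem:BasicKCover} to assume that every cover P1 plays in \(\mathsf G_1(\mathcal K_{X\times Y},\mathcal K_{X\times Y})\) consists of rectangles \(U\times V\) refining P1's actual cover; since any selected rectangle is contained in some genuine member of P1's cover, a \(k\)-cover built from rectangles yields one built from members, so working with rectangles loses nothing. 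I then record two elementary covering facts that drive everything else. First, if \(\{U_i:i\in\omega\}\) is a \(k\)-cover of \(X\) and \(\{V_j:j\in\omega\}\) is a \(k\)-cover of \(Y\), then \(\{U_i\times V_j:i,j\in\omega\}\) is a \(k\)-cover of \(X\times Y\), since every compact \(K\subseteq X\times Y\) satisfies \(K\subseteq\pi_X[K]\times\pi_Y[K]\) with both factors compact. Second, for a rectangular \(k\)-cover \(\mathscr W\) of \(X\times Y\) and a fixed compact \(K\subseteq X\), the \emph{trace} \(\{V:\exists\,U\ (K\subseteq U\text{ and }U\times V\in\mathscr W)\}\) is a \(k\)-cover of \(Y\), because \(K\times L\) is compact for every compact \(L\subseteq Y\).

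Fix winning strategies \(\sigma\) for P2 in the \(k\)-Rothberger game on \(X\) and \(\tau\) for P2 in the \(k\)-Rothberger game on \(Y\). The strategy on \(X\times Y\) runs one \emph{master} copy of the \(X\)-game driven by \(\sigma\), fed the \(X\)-projections of the rectangles P1 plays (which form \(k\)-covers of \(X\) by the slice reasoning behind the second fact), and, attached to each position of the master game, an entire \emph{inner} copy of the \(Y\)-game driven by \(\tau\) and fed the \(Y\)-trace above taken along the compact demands the master game is currently responsible for. Because each inner game requires its own infinite run of rounds, these plays must be interleaved along the tree \(\omega^{<\omega}\), and this is exactly where Lemma~\ref{lem:GeneralComboTool} enters. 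Enumerating the relevant nodes as \(s:\omega\to\omega^{<\omega}\) with \(s_0=\langle\rangle\), and letting \(r(s_n)\) record the finitely many earlier rounds whose outcomes determine which cover is fed at node \(s_n\), the lemma produces a bijection \(\beta:\omega^2\to\omega\) with \(\langle\beta(n,k):k\in\omega\rangle\) strictly increasing and \(r(s_n)\subseteq\beta(n,0)\). Using round \(\beta(n,k)\) for the \(k\)-th move of the play attached to node \(s_n\), the condition \(r(s_n)\subseteq\beta(n,0)\) guarantees causality: whenever P2 must answer at node \(s_n\), every cover it needs to feed \(\sigma\) or \(\tau\) has already been determined by strictly earlier rounds.

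To see the resulting strategy is winning, fix compact \(K_X\subseteq X\) and \(K_Y\subseteq Y\). Since \(\sigma\) is winning on \(X\), the master game's cumulative selections eventually cover \(K_X\), which locates a branch along which the inner \(Y\)-game is fed traces tied to a compact demand containing \(K_X\). Since \(\tau\) is winning on \(Y\), that inner game's selections eventually cover \(K_Y\); by construction the rectangle P2 emits at the corresponding round has \(X\)-part containing \(K_X\) and \(Y\)-part containing \(K_Y\), hence contains \(K_X\times K_Y\). As \(K_X\) and \(K_Y\) were arbitrary, the emitted rectangles form a \(k\)-cover of \(X\times Y\), so P2 wins.

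The main obstacle—and the reason a direct appeal to Theorem~\ref{thm:DiasScheepers} fails—is the coupling forced by single selection: in each round P2 emits a single rectangle, so its two projections cannot be chosen independently, unlike selections in a game played on \(\mathbb K(X)\times\mathbb K(Y)\). The second covering fact is what breaks this coupling, by converting a prescribed compact demand in the first coordinate into a genuine \(k\)-cover of the second that \(\tau\) can attack, and the tree scheduling of Lemma~\ref{lem:GeneralComboTool} is what lets the single master \(X\)-run coexist with the infinitely many inner \(Y\)-runs it spawns. I expect the most delicate bookkeeping to be reconciling the \emph{open}-valued selections produced by \(\sigma\) with the \emph{compact} parameters required by the trace, carried out jointly with the verification that the dependency sets \(r(s_n)\) are finite and that \(\{n:r(s_n)\subseteq m\}\) is infinite for each \(m\ge 1\), so that Lemma~\ref{lem:GeneralComboTool} genuinely applies. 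Everything else—checking that the fed covers are legitimate \(k\)-covers via the two facts—is routine, and the same skeleton, with finite selections replacing single ones, is what will later yield the finite-selection (Menger) analogue.
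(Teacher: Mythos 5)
You have the right ingredients --- reduction to rectangular covers via Lemma \ref{lem:BasicKCover}, the trace of a rectangular \(k\)-cover over a compact \(K \subseteq X\) as a \(k\)-cover of \(Y\), and the tree scheduling supplied by Lemma \ref{lem:GeneralComboTool} --- but the step you defer as ``delicate bookkeeping'' is the crux of the proof, and the architecture you describe cannot carry it out. A single ``master'' \(X\)-game fed the raw projections \(\{\pi_X[W] : W \in \mathscr W_n\}\) returns an \emph{open} set each round; an open set determines no compact parameter for the trace, and for a fixed \(U\) the collection of \(V\) with \(U \times V \in \mathscr W_n\) need not be a \(k\)-cover of \(Y\) (only the trace over a \emph{compact} subset of \(X\) is guaranteed to be one). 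So the inner \(Y\)-games you attach to positions of the master game have no well-defined input, and there is no way to emit a single coherent rectangle per round. This is exactly the coupling problem you name, left unresolved.

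The paper's proof resolves it by inverting your nesting and, crucially, the order of selection within each round. At round \(n = \beta(j,k)\) it first applies \(\sigma_Y\), for \emph{every} compact \(K \in \mathsf K(X)\) simultaneously, to the trace history coded by the node \(s_j\), obtaining a rectangle \(W_n(K) \in \mathscr W_n\) with \(K \subseteq \pi_X[W_n(K)]\). Only then is the family \(\mathscr U_n = \{ \pi_X[W_n(K)] : K \in \mathsf K(X) \}\) --- a \(k\)-cover of \(X\) \emph{indexed by compact sets} --- fed to a copy of \(\sigma_X\) attached to the node \(s_j\), so there are infinitely many \(X\)-game copies, one per node, rather than one master play. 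The point is that \(\sigma_X\)'s open selection now canonically pins down a compact \(K_n\) and hence a single rectangle \(W_n(K_n)\) to emit, whose \(X\)-part is a \(\sigma_X\)-answer and whose \(Y\)-part is a \(\sigma_Y\)-answer. In the verification one consults a \emph{fresh} \(X\)-game copy at each step to decide how to extend the \(Y\)-branch, which a single master play also cannot supply; and the emitted rectangle covers \(\pi_X[A]\) not because the compact demand \(K_n\) contains \(\pi_X[A]\) (it need not) but because its \(X\)-part is a winning \(\sigma_X\)-selection. To make your sketch into a proof you would need to adopt this \(Y\)-first, compact-indexed construction; as written, the argument does not close.
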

\begin{proof}
    Let \(\langle s_n : n \in \omega \rangle\) be an enumeration of \(\omega^{<\omega}\) where \(s_0 = \langle \rangle\).
    By Lemma \ref{lem:GeneralComboTool}, we can let
    \(\beta : \omega^2 \to \omega\) be a bijection such that \(\mathrm{range}(s_n) \subseteq \beta(n,0)\)
    for each \(n \in \omega\).
    Let \(\sigma_X\) and \(\sigma_Y\) be winning strategies for P2 in the \(k\)-Rothberger game in \(X\) and \(Y\), respectively.
    Without loss of generality, we consider only basic \(k\)-covers of \(X \times Y\) by Lemma \ref{lem:BasicKCover}.
    In particular, let \[\mathscr T_X \otimes \mathscr T_Y = \{ U \times V : \langle U , V \rangle \in \mathscr T_X \times \mathscr T_Y \}\]
    and \[\mathbb{BK} = \left\{ \mathscr W \subseteq \mathscr T_X \otimes \mathscr T_Y : \mathscr W \in \mathcal K_{X \times Y} \right\}.\]

    We define a winning strategy \(\sigma\) for P2 in the \(k\)-Rothberger game on \(X \times Y\) recursively as follows.
    Let \(n \in\omega \) be given and suppose we have \(\langle \mathscr W_p : p < n \rangle\),
    \(\langle \mathscr W_p^\prime : p < n \rangle\),
    \(\langle \mathscr V_p : p < n \rangle\), \(\langle W_p : p < n \rangle\),
    \(\langle \mathscr U_p : p < n \rangle\), and
    \(\langle K_p : p < n \rangle\) defined.
    Also set \(\langle j, k \rangle \in \omega^2\) to be so that \(\beta(j , k) = n\).
    Note that \[\mathrm{range}(s_j) \subseteq \beta(j,0) \subseteq \beta(j,k) = n.\]

    Given \(\mathscr W_n \in \mathbb{BK}\),  define \(\mathscr W_n^\prime : \mathsf K(X) \to \wp(\mathscr W_n)\)
    by \(\mathscr W_n^\prime(K) = \{ W \in \mathscr W_n : K \subseteq \pi_X[W] \}\).
    Then, define \(\mathscr V_n : \mathsf K(X) \to \mathcal K_Y\) by
    \(\mathscr V_n(K) = \pi_Y[\mathscr W_n^\prime(K) ] = \{ \pi_Y[W] : W \in \mathscr W_n^\prime(K) \}\).
    Let \(W_n(K) \in \mathscr W_n^\prime(K)\) be such that
    \[\pi_Y[W_n(K)] = \sigma_Y\left( \left\langle \mathscr V_{s_j(p)}(K_{s_j(p)}) : p \in \mathrm{dom}(s_j) \right\rangle \concat \mathscr V_n(K) \right).\]
    Note that \(\mathscr U_n := \left\{ \pi_X[W_n(K)] : K \in \mathsf K(X) \right\} \in \mathcal K_X\)
    so we can let \(K_n \in \mathsf K(X)\) be such that
    \[\pi_X[W_n(K_n)] = \sigma_X\left( \left\langle \mathscr U_{\beta(j,p)} : p\leq k \right\rangle \right).\]
    We define
    \[\sigma\left( \left\langle \mathscr W_p : p \leq n \right\rangle \right) = W_n(K_n).\]

    The final thing to show is that \(\sigma\) is winning.
    So let \(A \in \mathsf K(X \times Y)\) be arbitrary and then let
    \(K = \pi_X[A]\) and \(L = \pi_Y[A]\).
    For \(n \in \omega\), suppose we have \(\langle \ell_p : p < n \rangle\), \(\langle M_p : p < n \rangle\),
    and \(\langle m_p : p < n \rangle\) defined.
    Let \(m_n \in \omega\) be so that \(s_{m_n} = \langle \ell_p : p < n \rangle\).
    Observe that
    \[\left\langle \sigma_X \left( \left \langle \mathscr U_{\beta(m_n,p)} : p \leq N \right\rangle \right) : N \in \omega \right\rangle\]
    corresponds to a play of the \(k\)-Rothberger game on \(X\) according to \(\sigma_X\).
    Since \(\sigma_X\) is winning, there is some \(M_n \in \omega\) so that
    \[K \subseteq \sigma_X \left( \left \langle \mathscr U_{\beta(m_n,p)} : p \leq M_n \right\rangle \right).\]
    Let \(\ell_n = \beta(m_n,M_n)\).

    This defines sequences \(\langle \ell_n : n \in \omega \rangle\), \(\langle m_n : n \in \omega \rangle\),
    and \(\langle M_n : n \in \omega \rangle\).
    Observe that
    \[\left\langle \sigma_Y\left( \left\langle \mathscr V_{\ell_p}(K_{\ell_p}) : p \leq N \right\rangle \right) : N \in \omega \right\rangle\]
    corresponds to a run of the \(k\)-Rothberger game on \(Y\) corresponding to \(\sigma_Y\).
    Since \(\sigma_Y\) is winning, there is some \(w\in \omega\) so that
    \[L \subseteq \sigma_Y\left( \left\langle \mathscr V_{\ell_p}(K_{\ell_p}) : p \leq w \right\rangle \right).\]
    Behold that, since \(s_{m_w} = \langle \ell_p : p < w \rangle\),
    \begin{align*}
        L &\subseteq
        \sigma_Y\left( \left\langle \mathscr V_{\ell_p}(K_{\ell_p}) : p \leq w \right\rangle \right)\\
        &= \sigma_Y\left( \left\langle \mathscr V_{\ell_p}(K_{\ell_p}) : p < w \right\rangle \concat \mathscr V_{\ell_w}(K_{\ell_w}) \right)\\
        &= \sigma_Y\left( \left\langle \mathscr V_{s_{m_w}(p)}(K_{s_{m_w}(p)}) : p \in \mathrm{dom}(s_{m_w}) \right\rangle \concat \mathscr V_{\beta(m_w,M_w)}(K_{\beta(m_w,M_w)}) \right)\\
        &= \pi_Y\left[ W_{\beta(m_w,M_w)}(K_{\beta(m_w,M_w)}) \right].
    \end{align*}
    By construction,
    \[K \subseteq \sigma_X\left( \left\langle \mathscr U_{\beta(m_w,p)} : p \leq M_w \right\rangle \right)
    = \pi_X\left[ W_{\beta(m_w,M_w)}(K_{\beta(m_w,M_w)}) \right].\]
    Therefore,
    \[A \subseteq K \times L \subseteq W_{\beta(m_w,M_w)}(K_{\beta(m_w,M_w)})
    = \sigma\left( \left\langle \mathscr W_p : p \leq \beta(m_w,M_w) \right\rangle \right),\]
    finishing the proof.
\end{proof}

\begin{lemma} \label{comboTool}
    Suppose we have an enumeration \(\langle s_n : n \in \omega \rangle\) of \(\bigcup_{n\in\omega} \omega^n \times \omega^n\)
    where \(s_0 = \langle \langle \rangle , \langle \rangle \rangle\).
    For each \(n \in \omega\), let \(s_n^- , s_n^+ \in \omega^{\mathrm{len}(s_n)/2}\) be such that \(s_n = s_n^- \concat s_n^+\).
    Then there is a bijection \(\beta : \omega^2 \to \omega\) such that, for each \(n \in \omega\),
    \(\langle \beta(n,k) : k \in \omega \rangle\) is strictly increasing
    and \(\mathrm{range}(s_n^-) \subseteq \beta(n,0)\).
\end{lemma}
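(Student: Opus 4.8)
The plan is to reduce Lemma \ref{comboTool} directly to the already-established Lemma \ref{lem:GeneralComboTool}. The earlier lemma takes an injection $s : \omega \to \omega^{<\omega}$ with $s_0 = \langle\rangle$ together with a ``restriction'' function $r$ satisfying $r(s_n) \subseteq \mathrm{range}(s_n)$ and the crucial finiteness-threshold condition that $\{n : r(s_n) \subseteq m\}$ is infinite for every $m \geq 1$, and produces the desired bijection $\beta$ with $\langle\beta(n,k):k\in\omega\rangle$ strictly increasing and $r(s_n) \subseteq \beta(n,0)$. Here we are instead handed an enumeration $\langle s_n \rangle$ of $\bigcup_{n}\omega^n\times\omega^n$, and each $s_n$ splits as a concatenation $s_n^-\concat s_n^+$ of two equal-length words; the conclusion we want is $\mathrm{range}(s_n^-) \subseteq \beta(n,0)$. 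So the goal is to package the data $(s_n, s_n^-)$ into the hypotheses of Lemma \ref{lem:GeneralComboTool}.

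First I would fix the translation. Identify each element $s_n \in \bigcup_n \omega^n\times\omega^n$ with an ordinary word $\widehat{s}_n \in \omega^{<\omega}$ by simply regarding the concatenation $s_n^-\concat s_n^+$ as a single finite sequence of natural numbers (it already \emph{is} one under the stated concatenation convention). Since $\langle s_n\rangle$ is an enumeration with $s_0 = \langle\langle\rangle,\langle\rangle\rangle$ corresponding to the empty word, the map $n \mapsto \widehat{s}_n$ is an injection with $\widehat{s}_0 = \langle\rangle$, matching the first hypothesis. Then I would define the restriction function by $r(\widehat{s}_n) = \mathrm{range}(s_n^-)$. Because $s_n^-$ is a subword of $s_n$, we have $\mathrm{range}(s_n^-) \subseteq \mathrm{range}(s_n) = \mathrm{range}(\widehat{s}_n)$, giving $r(\widehat{s}_n) \subseteq \mathrm{range}(\widehat{s}_n)$ as required.

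The step I expect to be the only real content is verifying the threshold condition: for each $m \geq 1$, the set $\{n : \mathrm{range}(s_n^-) \subseteq m\}$ must be infinite. This holds because $\bigcup_n \omega^n\times\omega^n$ contains infinitely many pairs whose first coordinate $s_n^-$ uses only entries below $m$ — for instance, for every length $L$ there are already infinitely many pairs $(s^-, s^+)$ with $s^- = \langle 0,\dots,0\rangle$ of length $L$ (varying the second coordinate $s^+$ freely over $\omega^L$), and all of these satisfy $\mathrm{range}(s^-) = \{0\} \subseteq m$ since $m \geq 1$. As $\langle s_n\rangle$ enumerates \emph{all} of $\bigcup_n\omega^n\times\omega^n$, each such pair appears at some index, so the index set is infinite. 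With all three hypotheses of Lemma \ref{lem:GeneralComboTool} verified, that lemma delivers a bijection $\beta : \omega^2 \to \omega$ with $\langle\beta(n,k):k\in\omega\rangle$ strictly increasing and $r(\widehat{s}_n) = \mathrm{range}(s_n^-) \subseteq \beta(n,0)$, which is exactly the conclusion sought. No diagonalization needs to be redone here; the work is entirely in setting up the correct $r$ and confirming its infinitude property, and I would present it in a few lines.
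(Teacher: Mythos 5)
Your proposal is correct and follows essentially the same route as the paper: identify each pair $s_n$ with the concatenated word (injective since both halves have equal length), set $r(s_n)=\mathrm{range}(s_n^-)$, and verify the infiniteness condition via pairs whose first coordinate is a string of $0$s, then invoke Lemma \ref{lem:GeneralComboTool}. The only difference is cosmetic: the paper cites ``arbitrarily long sequences of $0$s'' where you vary the second coordinate over $\omega^L$; both give the required infinite index set.
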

\begin{proof}
    Note that \(n \mapsto s_n\), \(\omega \to \omega^{<\omega}\), is an injection.
    Then let \(r : \{ s_n : n \in \omega \} \to [\omega]^{<\omega}\) be defined by
    \(r(s_n) = \mathrm{range}(s_n^-)\).
    For \(m \geq 1\), note that \(\{ n \in \omega : r(s_n) \subseteq m \}\) is infinite
    since there are arbitrarily long sequences of \(0\)s.
    Hence, Lemma \ref{lem:GeneralComboTool} applies.
\end{proof}

\begin{theorem} \label{thm:StraKMengerProductive}
    If \(X\) and \(Y\) are strategically \(k\)-Menger, then \(X \times Y\) is strategically \(k\)-Menger.
    In other words, the property of being strategically \(k\)-Menger is finitely productive.
\end{theorem}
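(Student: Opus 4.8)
The plan is to run the argument of Theorem \ref{StratKRothProductive} in the finite-selection register, replacing the single-selection strategies there with winning finite-selection strategies \(\sigma_X\) and \(\sigma_Y\) for P2 in the \(k\)-Menger games on \(X\) and \(Y\), and replacing Lemma \ref{lem:GeneralComboTool} with Lemma \ref{comboTool}. As before, Lemma \ref{lem:BasicKCover} lets us restrict P1 to basic \(k\)-covers, so I would again work inside \(\mathbb{BK}\), the collection of \(k\)-covers of \(X \times Y\) built from rectangles. From an enumeration \(\langle s_n : n \in \omega\rangle\) of \(\bigcup_{n\in\omega}\omega^n\times\omega^n\) with \(s_0 = \langle\langle\rangle,\langle\rangle\rangle\), Lemma \ref{comboTool} produces a bijection \(\beta : \omega^2 \to \omega\) with each \(\langle\beta(n,k):k\in\omega\rangle\) strictly increasing and \(\mathrm{range}(s_n^-)\subseteq\beta(n,0)\); here the first half \(s_n^-\) records the sequence of product-rounds making up a branch of the auxiliary \(Y\)-play, while the second half \(s_n^+\) records, within each such round, which element of the relevant finite selection is being followed.

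Next I would define \(\sigma\) recursively. At product-round \(n\), write \(\beta(j,k)=n\). For each compact \(K\subseteq X\), set \(\mathscr W_n'(K) = \{W\in\mathscr W_n : K\subseteq\pi_X[W]\}\) and \(\mathscr V_n(K) = \pi_Y[\mathscr W_n'(K)]\); exactly as in Theorem \ref{StratKRothProductive}, one checks that \(\mathscr V_n(K)\in\mathcal K_Y\) by applying that \(\mathscr W_n\) is a \(k\)-cover to the compact sets \(K\times L\). Reading the \(Y\)-history off \(s_j^-\) and \(s_j^+\) and concatenating \(\mathscr V_n(K)\), I apply \(\sigma_Y\) to obtain a finite subset of \(\mathscr V_n(K)\); pulling each of its members back through \(\pi_Y\) inside \(\mathscr W_n'(K)\) yields finitely many rectangles \(W_n(K,i)\) with \(K\subseteq\pi_X[W_n(K,i)]\). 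Collecting their \(X\)-projections over all \(K\) gives a \(k\)-cover \(\mathscr U_n\) of \(X\). Applying \(\sigma_X\) to \(\langle\mathscr U_{\beta(j,p)}:p\leq k\rangle\) returns a finite subset of \(\mathscr U_n\), which selects finitely many pairs \((K,i)\); P2's move \(\sigma(\langle\mathscr W_p:p\leq n\rangle)\) is the corresponding finite set of rectangles \(W_n(K,i)\). The containment \(\mathrm{range}(s_j^-)\subseteq\beta(j,0)\leq n\) guarantees that every product-round consulted in assembling the \(Y\)-history has already been processed, so the recursion is well-founded.

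To check that \(\sigma\) is winning, I would fix \(A\in\mathsf K(X\times Y)\) and set \(K=\pi_X[A]\), \(L=\pi_Y[A]\), so that \(A\subseteq K\times L\). As in the Rothberger proof I would build a branch recursively, but now tracking pairs: at each stage I use that \(\sigma_X\) is winning on the branch's \(X\)-play to locate a round whose finite \(X\)-selection covers \(K\), together with the index \(i\) of a specific element doing so, recording this (round, index) pair via the paired-word machinery of Lemma \ref{comboTool}. The resulting branch determines a genuine partial play of the \(k\)-Menger game on \(Y\), built from the \(\mathscr V_n(K)\) at the chosen compacta; since \(\sigma_Y\) is winning, some round of this \(Y\)-play has its finite selection covering \(L\). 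Matching the two witnesses through a single rectangle \(W=U\times V\) played by \(\sigma\) at the appropriate product-round gives \(K\subseteq U\) and \(L\subseteq V\), whence \(A\subseteq K\times L\subseteq U\times V=W\), as required.

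The main obstacle is bookkeeping rather than topology. Because P2 now answers with finite sets in both factor games, a single product-round spawns finitely many \(X\)-selections and, beneath each, finitely many \(Y\)-selections, so a branch must remember not only which earlier product-rounds feed the current \(Y\)-history (as in Theorem \ref{StratKRothProductive}) but also which element of each finite selection is being followed. Encoding both strands is exactly the role of the paired words \((s_n^-,s_n^+)\) from Lemma \ref{comboTool}, with only the round-coordinate \(s_n^-\) required to reference already-defined rounds. The delicate point is to verify that the \(Y\)-histories assembled during the construction coincide with the partial \(Y\)-plays reconstructed during the verification, so that the hypothesis that \(\sigma_Y\) is winning can legitimately be invoked along the reconstructed branch; the strict monotonicity of \(\langle\beta(n,k):k\in\omega\rangle\) and the constraint on \(\mathrm{range}(s_n^-)\) are what force these two descriptions of the same branch to agree.
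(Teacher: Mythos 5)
Your architecture is the paper's: restrict to basic covers via Lemma \ref{lem:BasicKCover}, use the paired words of Lemma \ref{comboTool} to interleave the two factor games, build per-compactum auxiliary \(Y\)-covers \(\mathscr V_n(K)=\pi_Y[\mathscr W_n'(K)]\), aggregate into an auxiliary \(k\)-cover of \(X\), and close with a two-stage branch argument. But there is one genuinely load-bearing idea in the finite-selection version that your write-up omits, and without it the final matching step fails. You form \(\mathscr U_n\) by ``collecting their \(X\)-projections,'' i.e.\ from the individual sets \(\pi_X[W_n(K,i)]\), indexed by pairs \((K,i)\). Then in the verification, \(\sigma_X\) being winning hands you a specific pair \((K',i)\) with \(\pi_X[A]\subseteq\pi_X[W_\ell(K',i)]\), while \(\sigma_Y\) being winning hands you some member \(\pi_Y[W_\ell(K',i')]\) of the finite selection from \(\mathscr V_\ell(K')\) containing \(\pi_Y[A]\). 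Nothing forces \(i=i'\): the rectangle witnessing the \(X\)-containment and the rectangle witnessing the \(Y\)-containment may be different members of the same finite \(Y\)-selection, and neither alone need contain \(\pi_X[A]\times\pi_Y[A]\). Knowing only \(K'\subseteq\pi_X[W_\ell(K',i')]\) does not help, since \(K'\) bears no relation to \(\pi_X[A]\). This is exactly the mismatch the paper's construction is designed to kill.

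The fix is the paper's definition
\[
\mathscr U_n=\left\{\ \bigcap\pi_X[\mathscr G_n(K)] : K\in\mathsf K(X)\ \right\},
\]
where \(\mathscr G_n(K)\) is the finite set of rectangles realizing \(\sigma_Y\)'s selection at \(K\): each element of \(\mathscr U_n\) is the \emph{intersection} of the \(X\)-projections of the whole finite \(Y\)-selection attached to \(K\), indexed by \(K\) alone. This is still a \(k\)-cover of \(X\) because every \(W\in\mathscr W_n'(K)\) satisfies \(K\subseteq\pi_X[W]\), and it guarantees that when \(\sigma_X\)'s selection covers \(\pi_X[A]\) via \(\bigcap\pi_X[\mathscr G_\ell(K')]\), then \(\pi_X[A]\subseteq\pi_X[W]\) for \emph{every} \(W\in\mathscr G_\ell(K')\) --- in particular for whichever one the \(Y\)-side later singles out. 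Correspondingly, the second half \(s_n^+\) of the paired word should index which compactum in \(\sigma_X\)'s finite selection \(\mathbf F_{\ell_p}\) is being followed (the paper's \(u_p\) with \(K_p=\vec{\mathbf F}_{\ell_p}(u_p)\)), not which rectangle inside a single \(\mathscr G_n(K)\). With that change your branch construction and the rest of the bookkeeping go through as you describe.
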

\begin{proof}
    Let \(\sigma_X\) and \(\sigma_Y\) be winning strategies for P2 in the \(k\)-Menger game in \(X\) and \(Y\), respectively.
    Without loss of generality, we consider only basic \(k\)-covers of \(X \times Y\) by Lemma \ref{lem:BasicKCover}.
    So let \(\mathbb{BK}\) be defined as in the proof of Theorem \ref{StratKRothProductive}.

    We will recursively define a strategy \(\sigma\) for P2 in the \(k\)-Menger game on \(X \times Y\).
    First, fix a choice function \(\vec{\cdot} : [\mathsf K(X)]^{<\omega} \to \mathsf K(X)^\omega\) to be such that
    \(\mathbf K = \mathrm{range}\left( \vec{\mathbf K} \right)\).
    Also, let \(\langle s_n : n \in \omega \rangle\) and \(\beta : \omega^2 \to \omega\) be as in Lemma \ref{comboTool}.

    Now, let \(n \in\omega \) be given and suppose we have \(\langle \mathscr W_p : p < n \rangle\),
    \(\langle \mathscr W_p^\prime : p < n \rangle\),
    \(\langle \mathscr V_p : p < n \rangle\), \(\langle \mathscr G_p : p < n \rangle\),
    \(\langle \mathscr U_p : p < n \rangle\), and
    \(\langle \mathbf F_p : p < n \rangle\) defined.
    Also set \(\langle j, k \rangle \in \omega^2\) such that \(\beta(j , k) = n\).
    Note that \[\mathrm{range}(s^-_j) \subseteq \beta(j,0) \subseteq \beta(j,k) = n.\]

    Given \(\mathscr W_n \in \mathbb{BK}\), define \(\mathscr W_n^\prime : \mathsf K(X) \to \wp(\mathscr W_n)\)
    by \(\mathscr W_n^\prime(K) = \{ W \in \mathscr W_n : K \subseteq \pi_X[W] \}\).
    Then, define \(\mathscr V_n : \mathsf K(X) \to \mathcal K_Y\) by
    \(\mathscr V_n(K) = \pi_Y[\mathscr W_n^\prime(K) ]\).

    We let \(\mathscr G_n(K) \in \left[ \mathscr W_n^\prime(K) \right]^{<\omega}\) be so that
    \[
        \pi_Y[\mathscr G_n(K)] =
        \sigma_Y
        \left(
                \left\langle
                    \mathscr V_{s_j^-(p)} \left( \vec{\mathbf F}_{s_j^-(p)}\left(s_j^+(p)\right) \right) : p \in \mathrm{dom}(s_j^-)
                \right\rangle
            \concat \mathscr V_n(K)
        \right).
    \]
    Then define \[\mathscr U_n = \left\{ \bigcap \pi_X[\mathscr G_n(K)] : K \in \mathsf K(X) \right\} \in \mathcal K_X\]
    and let \(\mathbf F_n \in [\mathsf K(X)]^{<\omega}\) to be such that
    \[\sigma_X\left( \left \langle \mathscr U_{\beta(j,p)} : p \leq k \right\rangle \right) = \left\{ \bigcap \pi_X[\mathscr G_n(K)] : K \in \mathbf F_n \right\}.\]

    Finally, we define
    \[\sigma\left( \left\langle \mathscr W_p : p \leq n \right\rangle \right) = \bigcup_{K \in \mathbf F_n} \mathscr G_n(K).\]

    To finish the proof, we need to show that \(\sigma\) is a winning strategy.
    So let \(A \in \mathsf K(X \times Y)\) be arbitrary and let
    \(K = \pi_X[A]\) and \(L = \pi_Y[A]\).
    For \(n \in \omega\), suppose we have
    \(\langle m_p : p < n \rangle\), \(\langle M_p : p < n \rangle\), \(\vec{\ell} = \langle \ell_p : p < n \rangle\),
    \(\vec{u} = \langle u_p : p < n \rangle\), and \(\langle K_p : p < n \rangle\) defined.
    Then we can let \(m_n \in \omega\) be so that \(s_{m_n} = \langle \vec{\ell}, \vec{u} \rangle\).
    Observe that
    \[\left\langle \sigma_X \left( \left \langle \mathscr U_{\beta(m_n,p)} : p \leq N \right\rangle \right) : N \in \omega \right\rangle\]
    corresponds to a play of the \(k\)-Menger game on \(X\) according to \(\sigma_X\).
    Since \(\sigma_X\) is winning, there is some \(M_n \in \omega\)
    and \(U \in \sigma_X\left( \left \langle \mathscr U_{\beta(m_n,p)} : p \leq M_n \right\rangle \right)\) such that \(K \subseteq U\).
    It follows that there is some \(u_n \in \omega\) such that
    \[K \subseteq \bigcap \pi_X\left[ \mathscr G_{\beta(m_n,M_n)}\left( \vec{\mathbf F}_{\beta(m_n,M_n)}\left( u_n \right) \right) \right].\]
    Let \(\ell_n = \beta(m_n,M_n)\) and \(K_n = \vec{\mathbf F}_{\beta(m_n,M_n)}\left( u_n \right)\).
    Note that \(K_n \in \mathbf F_{\beta(m_n,M_n)}\).

    This defines sequences \(\langle m_n : n \in \omega \rangle\), \(\langle M_n : n \in \omega\rangle\),
    \(\langle \ell_n : n \in \omega \rangle\), \(\langle u_n : n \in \omega \rangle\), and \(\langle K_n : n \in \omega \rangle\).
    Note that
    \[
        \left\langle
            \sigma_Y\left(
                \left\langle
                    \mathscr V_{\ell_p}\left( K_p \right) : p \leq N
                \right\rangle
            \right) : N \in \omega
        \right\rangle
    \]
    corresponds to a play of the \(k\)-Menger game on \(Y\) according to \(\sigma_Y\).
    Since \(\sigma_Y\) is winning, there is some \(w \in \omega\) and \(V \in \sigma_Y\left( \left\langle \mathscr V_{\ell_p}\left( x_p \right) : p \leq w \right\rangle \right)\)
    such that \(L \subseteq V\).
    
    Note that \(s_{m_w} = \left\langle \langle \ell_p : p < w \rangle , \langle u_p : p < w \rangle \right\rangle\).
    Then
    \begin{align*}
        \pi_Y\left[ \mathscr G_{\ell_w}(K_w) \right]
        &=\pi_Y\left[\mathscr G_{\beta(m_w,M_w)}\left(\vec{\mathbf F}_{\beta(m_w,M_w)}(u_w)\right)\right]\\
        &= \sigma_Y
        \left(
                \left\langle
                    \mathscr V_{s_{m_w}^-(p)} \left( \vec{\mathbf F}_{s_{m_w}^-(p)}\left(s_{m_w}^+(p)\right) \right)
                    : p \in \mathrm{dom}(s_{m_w}^-)
                \right\rangle
            \concat \mathscr V_{\beta(m_w,M_w)}(K_w)
        \right)\\
        &= \sigma_Y
        \left(
                \left\langle
                    \mathscr V_{\ell_p} \left( \vec{\mathbf F}_{\ell_p}\left(u_p\right) \right) : p < w
                \right\rangle
            \concat \mathscr V_{\beta(m_w,M_w)}\left(\vec{\mathbf F}_{\beta(m_w,M_w)}\left( u_w \right)\right)
        \right)\\
        &= \sigma_Y \left(
                \left\langle
                    \mathscr V_{\ell_p} \left( \vec{\mathbf F}_{\ell_p}\left(u_p\right) \right) : p \leq w
                \right\rangle
            \right).
    \end{align*}
    Hence, there is some \(V \in \pi_Y \left[\mathscr G_{\ell_w}\left(K_w\right)\right]\) such that \(L \subseteq V\).
    Thus, there is some \[W \in \mathscr G_{\ell_w}\left(K_w\right)\] such that \(L \subseteq \pi_Y[W]\).
    Observe that, since \(K_w \in \mathbf F_{\ell_w}\),
    \[W \in \bigcup_{u \in \mathbf F_{\ell_w}} \mathscr G_{\ell_w}(u)
    = \sigma\left( \left\langle \mathscr W_p : p \leq \ell_w \right\rangle \right).\]
    Finally, note that
    \[K \subseteq \bigcap \pi_X \left[ \mathscr G_{\ell_w}\left(K_w\right)\right] \subseteq \pi_X[W].\]
    Therefore, \(A \subseteq K \times L \subseteq W\).
\end{proof}

When it comes to \(k\)-covers, hemicompactness plays an important role.
\begin{proposition}[{\cite[Prop. 5]{AppskcoversII}}] \label{prop:T1FirstCountableHemicompact}
    For any \(T_1\) first-countable space, the following are equivalent:
    \begin{enumerate}[label=(\alph*)]
        \item
        \(X\) is hemicompact.
        \item
        \(X \models \mathsf S_{\mathrm{fin}}(\mathcal K, \mathcal K)\).
        \item
        \(X \models \mathsf S_1(\mathcal K, \mathcal K)\).
    \end{enumerate}
\end{proposition}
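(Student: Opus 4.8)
The three conditions form a cycle, and two of the implications require nothing beyond the definitions. For (c)$\Rightarrow$(b): a single-selection witness is in particular a finite-selection witness (take the singletons $\mathscr F_n=\{U_n\}$), so $\mathsf S_1(\mathcal K,\mathcal K)$ implies $\mathsf S_{\mathrm{fin}}(\mathcal K,\mathcal K)$; this also follows from $\mathsf G_1(\mathcal K,\mathcal K)\leq^+_{\mathrm{II}}\mathsf G_{\mathrm{fin}}(\mathcal K,\mathcal K)$ together with Remark \ref{remark:LindelofAndSelection}. For (a)$\Rightarrow$(c): if $\{K_n:n\in\omega\}$ witnesses hemicompactness, then given any sequence \(\langle\mathscr U_n:n\in\omega\rangle\) of \(k\)-covers one chooses \(U_n\in\mathscr U_n\) with \(K_n\subseteq U_n\), and \(\{U_n:n\in\omega\}\) is a \(k\)-cover because every compact \(L\) lies in some \(K_n\subseteq U_n\). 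This direction uses no separation axioms and in fact shows hemicompact \(\Rightarrow\) \(k\)-Rothberger outright.

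The substance is (b)$\Rightarrow$(a), and this is where I would spend all the effort; I would argue by contraposition. Assume \(X\) is \(T_1\), first-countable, and \emph{not} hemicompact, and show \(X\not\models\mathsf S_{\mathrm{fin}}(\mathcal K,\mathcal K)\). The key simplification is to pass to the game: by the Caruvana--Holshouser theorem \cite{CHVietoris} quoted above, \(\mathrm I\uparrow\mathsf G_{\mathrm{fin}}(\mathcal K,\mathcal K)\) is equivalent to \(\mathrm I\underset{\mathrm{pre}}{\uparrow}\mathsf G_{\mathrm{fin}}(\mathcal K,\mathcal K)\), and the latter is equivalent to \(X\not\models\mathsf S_{\mathrm{fin}}(\mathcal K,\mathcal K)\) by Remark \ref{remark:LindelofAndSelection}. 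Thus it suffices to produce a \emph{full-information} winning strategy for P1, which is far more tractable than defeating every finite selection with a single fixed sequence of covers, since P1 may now react to P2's moves.

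The strategy I would build has P1 assemble, over the play, a single compact set \(K\) that escapes every open set P2 selects, so that \(\bigcup_n\mathscr F_n\) is not a \(k\)-cover. In a first-countable \(T_1\) space the safe compact sets to build are convergent sequences together with their limits, so I would aim to produce points \(x_0,x_1,\dots\) converging to some \(x\) with \(K=\{x_n:n\in\omega\}\cup\{x\}\). Writing \(G_n=\bigcup\bigcup_{i\le n}\mathscr F_i\) for the region swept out by P2 through round \(n\), P1 plays \(k\)-covers keeping P2's selections ``localized'' and then reads off \(x_n\notin G_{n-1}\). The point of this is clean: each \(U\in\mathscr F_j\) satisfies \(U\subseteq G_n\) for all \(n\ge j\), so \(U\) omits \(x_n\) for all large \(n\), whence \(K\not\subseteq U\); thus if \(K\) is genuinely compact, P1 wins. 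Here non-hemicompactness is exactly what guarantees P1 never runs out of room to escape — the captured region can never be cofinal in the compact sets — while \(T_1\) and first-countability are what let the escaping points be threaded into an honest compact set.

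Reconciling ``escape'' with ``compactness'' is the main obstacle and the heart of the proof: choosing escaping points \(x_n\notin G_{n-1}\) is easy, but forcing them to converge — producing a genuine compact witness rather than a closed discrete escaping set — is where the hypotheses must be used with care. I expect the recursion to interleave a choice of prospective limit point with the first-countable neighborhood bases, using non-hemicompactness at each stage to locate a compact set not yet captured and first-countability to pick the next term inside both the appropriate basic neighborhood and the complement of \(G_n\), with \(T_1\) ensuring the limit set is closed and that single points can be excised from neighborhoods. A cleaner but riskier alternative would be a direct argument — deduce local compactness from \(k\)-Menger and first-countability, extract a countable \(k\)-subcover by \(k\)-Lindel\"ofness, and convert it into a cofinal family of compact sets — but controlling closures under only \(T_1\) is delicate, so I would favor the game-theoretic contrapositive above.
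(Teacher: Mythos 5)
The paper does not actually prove this proposition; it is quoted from \cite[Prop.~5]{AppskcoversII}, so there is no internal proof to compare against. Your treatment of (c)\(\Rightarrow\)(b) and (a)\(\Rightarrow\)(c) is correct and complete (and rightly notes that hemicompactness gives \(k\)-Rothberger with no separation axioms), and you correctly locate all of the work in (b)\(\Rightarrow\)(a). But that implication is where your proposal has a genuine gap, and not merely because the recursion is left as an expectation: the escape mechanism you describe provably cannot work for spaces the proposition must cover.

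Concretely, your plan requires a compact witness \(K=\{x_n:n\in\omega\}\cup\{x\}\) with \(x_n\to x\) and \(x_n\notin G_{n-1}\), where \(G_n\) is the union of everything P2 has selected through round \(n\). If the limit \(x\) ever lies in some selected \(U\in\mathscr F_j\), then \(U\) is a neighborhood of \(x\) omitting every \(x_m\) with \(m>j\), contradicting \(x_m\to x\); so your witness can exist only if \(x\notin\bigcup_n G_n\), i.e., only if P2's selections fail even to be an open cover of \(X\). Now take \(X=\mathbb Q\), which is first countable, \(T_1\), and not hemicompact --- precisely a space this proposition must handle, and the one the paper applies it to in its example on \(\mathbb Q\). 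Enumerating \(\mathbb Q=\{q_n:n\in\omega\}\), P2 can, against \emph{any} strategy of P1, include in \(\mathscr F_n\) a member of \(\mathscr U_n\) containing \(q_n\) (every \(k\)-cover is an open cover), so that \(\bigcup_n G_n=X\). Against such a play no convergent escaping sequence of your kind exists, so the strategy you describe is not winning. The correct witness must be allowed to meet the selected sets; what has to fail is only that \(K\) be contained in any \emph{single} selected \(U\), with different points of \(K\) escaping different \(U\)'s, and arranging that is a genuinely different construction. Your ``riskier alternative'' --- show that a first countable \(T_1\) space satisfying \(\mathsf S_{\mathrm{fin}}(\mathcal K,\mathcal K)\) is locally compact, then note that for a locally compact space the interiors of compact sets form a \(k\)-cover, so \(k\)-Lindel\"ofness yields a countable family of compact sets witnessing hemicompactness --- is in fact the viable route, and is exactly where first countability and \(T_1\) enter; as written, however, neither route is carried out.
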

The reader would observe that the \(T_1\) assumption is expressly used in the proof of \cite[Prop. 5]{AppskcoversII}
though it is not explicitly mentioned in the hypotheses.

Even if we relax first-countability, we still get a related characterization of
hemicompactness for \(T_1\) spaces.
\begin{corollary} \label{cor:T1Hemicompact}
    For any \(T_1\) space, the following are equivalent:
    \begin{enumerate}[label=(\alph*)]
        \item
        \(X\) is hemicompact.
        \item
        \(\mathrm{II} \underset{\mathrm{mark}}{\uparrow} \mathsf G_1(\mathcal K_X, \mathcal K_X)\).
    \end{enumerate}
\end{corollary}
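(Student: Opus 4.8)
The plan is to combine Lemma \ref{lem:NearlyCofinal} with Lemma \ref{lem:BasicT1}, the point being that the $T_1$ separation axiom collapses the ``near-cofinality'' that witnesses a winning Markov strategy into honest cofinality among compact sets, which is precisely hemicompactness.

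First I would record the identification $\mathcal K_X = \mathcal O_X(\mathsf K(X))$: an open cover of $X$ is a $k$-cover exactly when every nonempty compact subset is contained in some member, which is verbatim the defining condition of $\mathcal O_X(\mathcal A)$ with $\mathcal A = \mathsf K(X)$. Applying Lemma \ref{lem:NearlyCofinal} with $\mathcal A = \mathsf K(X)$ then gives that (b) holds if and only if $\widehat{\mathrm{cof}}_X(\mathsf K(X),\mathsf K(X)) \leq \omega$; that is, there is a countable family $\{K_n : n \in \omega\} \subseteq \mathsf K(X)$ such that every compact $K \subseteq X$ satisfies $K \subseteq \bigcap \mathcal N_{K_n}$ for some $n \in \omega$.

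Next, invoking Lemma \ref{lem:BasicT1}, the $T_1$ hypothesis yields $\bigcap \mathcal N_{K_n} = K_n$ for each $n$. Substituting this equality into the condition above, it reads: there is a countable family $\{K_n : n \in \omega\}$ of compact sets such that every compact $K$ is contained in some $K_n$. This is exactly the assertion that $\mathrm{cof}_X(\mathsf K(X),\mathsf K(X)) \leq \omega$, i.e. that $X$ is hemicompact. Since the two conditions literally coincide once the $T_1$ simplification is applied, both implications (a)$\Rightarrow$(b) and (b)$\Rightarrow$(a) fall out simultaneously, using the same witnessing family in either direction.

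There is essentially no hard step here; the substantive work has been front-loaded into Lemmas \ref{lem:NearlyCofinal} and \ref{lem:BasicT1}. The only points demanding a moment's care are confirming that $k$-covers are exactly the $\mathsf K(X)$-covers (so that Lemma \ref{lem:NearlyCofinal} applies with $\mathcal A = \mathsf K(X)$) and that Lemma \ref{lem:BasicT1}, stated for arbitrary subsets, specializes to the compact witnesses $K_n$. I expect these verifications to be routine, so the corollary is an immediate consequence of the two lemmas.
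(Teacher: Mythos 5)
Your proposal is correct and is exactly the paper's argument: the paper's proof of this corollary consists of the single sentence that it follows immediately from Lemmas \ref{lem:NearlyCofinal} and \ref{lem:BasicT1}, which is precisely the two-step reduction you carry out. The details you fill in (the identification \(\mathcal K_X = \mathcal O_X(\mathsf K(X))\) and the collapse \(\bigcap \mathcal N_{K_n} = K_n\) under \(T_1\)) are the intended routine verifications.
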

\begin{proof}
    This follows immediately from Lemmas \ref{lem:NearlyCofinal} and \ref{lem:BasicT1}.
\end{proof}
Note that Lemma \ref{lem:NearlyCofinal} applies regardless of the \(T_1\) assumption,
so we introduce a descriptive term for the property of being Markov \(k\)-Rothberger.
\begin{definition}
    We say that a space \(X\) is \emph{nearly hemicompact} if
    \(\widehat{\mathrm{cof}}_X(\mathsf K(X), \mathsf K(X)) \leq \omega\).
\end{definition}
\begin{corollary} \label{cor:NearlyHemicompact}
    A space is nearly hemicompact if and only if it is Markov \(k\)-Rothberger.
\end{corollary}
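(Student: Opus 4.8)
The plan is to recognize this as a direct instance of Lemma~\ref{lem:NearlyCofinal} with the parameter $\mathcal A$ taken to be $\mathsf K(X)$, the collection of non-empty compact subsets of $X$. All the substantive work has already been done in that lemma, so the task reduces to matching its two sides against the two named properties in the statement.

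First I would record the bookkeeping identity $\mathcal O_X(\mathsf K(X)) = \mathcal K_X$. By the definition of $\mathcal A$-covers, a member of $\mathcal O_X(\mathsf K(X))$ is an open cover $\mathscr U$ of $X$ such that every non-empty compact $K \subseteq X$ is contained in some $U \in \mathscr U$; since the empty set is trivially contained in any member of a cover, restricting attention to non-empty compact sets (as in the definition of $\mathsf K(X)$) does not change the class, so this is precisely the condition that $\mathscr U$ be a $k$-cover, i.e. $\mathscr U \in \mathcal K_X$. With this identification, the game $\mathsf G_1(\mathcal O_X(\mathsf K(X)), \mathcal O_X(\mathsf K(X)))$ appearing in Lemma~\ref{lem:NearlyCofinal} is literally $\mathsf G_1(\mathcal K_X, \mathcal K_X)$.

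Next I would read off the two equivalences. Applying Lemma~\ref{lem:NearlyCofinal} with $\mathcal A = \mathsf K(X)$ gives that $\widehat{\mathrm{cof}}_X(\mathsf K(X), \mathsf K(X)) \leq \omega$ holds if and only if $\mathrm{II} \underset{\mathrm{mark}}{\uparrow} \mathsf G_1(\mathcal K_X, \mathcal K_X)$. The left-hand side is, by definition, the assertion that $X$ is nearly hemicompact, and the right-hand side is, by definition, the assertion that $X$ is Markov $k$-Rothberger (P2 has a winning Markov strategy in the $k$-Rothberger game $\mathsf G_1(\mathcal K, \mathcal K)$). Stringing these together yields the claimed biconditional. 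Since everything of substance is carried by Lemma~\ref{lem:NearlyCofinal}, there is no genuine obstacle here; the only point deserving a moment's care is the identity $\mathcal O_X(\mathsf K(X)) = \mathcal K_X$, which ensures that the abstract game in the lemma coincides with the concrete $k$-Rothberger game.
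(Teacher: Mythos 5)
Your proposal is correct and is exactly the argument the paper intends: the corollary is stated as an immediate consequence of Lemma~\ref{lem:NearlyCofinal} applied with $\mathcal A = \mathsf K(X)$, using the identification $\mathcal O_X(\mathsf K(X)) = \mathcal K_X$. Your write-up just makes that identification explicit, which the paper leaves tacit.
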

Evidently, every topologically countable space is nearly hemicompact.
On the other hand, every uncountable \(T_1\) hemicompact space is nearly hemicompact but not topologically countable.

To prove Theorem \ref{MarkovKProductive}\ref{MarkovKMengerProd}, we can recycle, as we did for Theorem \ref{thm:StraKMengerProductive},
the idea behind the proof of Theorem \ref{thm:DiasScheepers}\ref{DiasScheepersMenger} of Dias and Scheepers \cite{DiasScheepers},
in which they thank L. Aurichi.
In the Markov case, as one would expect, the combinatorial obstacles are significantly reduced.

We would like to compare the current proofs of parts \ref{MarkovKRothProd} and
\ref{MarkovKMengerProd} in Theorem \ref{MarkovKProductive} in light of the similarity between the proofs
of Theorems \ref{StratKRothProductive} and \ref{thm:StraKMengerProductive}.
As can be seen, having the covering characterization of Corollary \ref{cor:NearlyHemicompact} significantly
simplifies matters.

\begin{theorem} \label{MarkovKProductive}
    Let \(X\) and \(Y\) be spaces.
    \begin{enumerate}[label=(\arabic*),ref=(\arabic*)]
        \item \label{MarkovKRothProd}
        If \(X\) and \(Y\) are both Markov \(k\)-Rothberger, then so is \(X \times Y\).
        In other words, the property of being Markov \(k\)-Rothberger is finitely productive.
        \item \label{MarkovKMengerProd}
        If \(X\) and \(Y\) are both Markov \(k\)-Menger, then so is \(X \times Y\).
        In other words, the property of being Markov \(k\)-Menger is finitely productive.
    \end{enumerate}
\end{theorem}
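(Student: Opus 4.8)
The plan is to treat the two parts by entirely different mechanisms: for \ref{MarkovKRothProd} I would exploit the clean covering characterization, while for \ref{MarkovKMengerProd} I would build a Markov strategy directly, recycling the architecture of Theorem \ref{thm:StraKMengerProductive}.

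For \ref{MarkovKRothProd}, I would pass through Corollary \ref{cor:NearlyHemicompact} and show that near hemicompactness is finitely productive. Given witnessing families \(\{K_n : n \in \omega\} \subseteq \mathsf K(X)\) and \(\{L_m : m \in \omega\} \subseteq \mathsf K(Y)\), the natural candidate for \(X \times Y\) is the countable family \(\{K_n \times L_m : n,m \in \omega\}\) of non-empty compact sets. For an arbitrary \(A \in \mathsf K(X \times Y)\), both \(\pi_X[A]\) and \(\pi_Y[A]\) are compact, so there are \(n,m\) with \(\pi_X[A] \subseteq \bigcap \mathcal N_{K_n}\) and \(\pi_Y[A] \subseteq \bigcap \mathcal N_{L_m}\). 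The only genuine point is the containment \(\bigcap \mathcal N_{K_n} \times \bigcap \mathcal N_{L_m} \subseteq \bigcap \mathcal N_{K_n \times L_m}\), and this is exactly where The Wallace Theorem enters: any open \(W \supseteq K_n \times L_m\) contains a rectangle \(U \times V\) with \(K_n \subseteq U\) and \(L_m \subseteq V\), which forces any point of the left-hand side into \(U \times V \subseteq W\). Combining this with \(A \subseteq \pi_X[A] \times \pi_Y[A]\) yields \(A \subseteq \bigcap \mathcal N_{K_n \times L_m}\), so \(X \times Y\) is nearly hemicompact, hence Markov \(k\)-Rothberger.

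For \ref{MarkovKMengerProd}, I would mirror the construction in Theorem \ref{thm:StraKMengerProductive} with the key simplification that, because \(\sigma_X\) and \(\sigma_Y\) are now Markov, no history needs to be encoded, so a plain bijection \(\beta : \omega^2 \to \omega\) suffices in place of the carefully engineered one of Lemma \ref{comboTool}. Working with basic \(k\)-covers via Lemma \ref{lem:BasicKCover}, at a product turn \(n\) with \(\beta^{-1}(n) = (j,k)\) I would, for each \(K \in \mathsf K(X)\), form \(\mathscr V_n(K) = \pi_Y[\{W \in \mathscr W_n : K \subseteq \pi_X[W]\}]\) (a \(k\)-cover of \(Y\)), lift \(\sigma_Y(\mathscr V_n(K), j)\) to a finite \(\mathscr G_n(K) \subseteq \mathscr W_n\) with \(\pi_Y[\mathscr G_n(K)] = \sigma_Y(\mathscr V_n(K), j)\), assemble the \(k\)-cover \(\mathscr U_n = \{\bigcap \pi_X[\mathscr G_n(K)] : K \in \mathsf K(X)\}\) of \(X\), and let \(\sigma_X(\mathscr U_n, k)\) select a finite \(\mathbf F_n \subseteq \mathsf K(X)\), with response \(\bigcup_{K \in \mathbf F_n} \mathscr G_n(K)\). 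The essential features are that the \(Y\)-side is computed for all \(K\) before the \(X\)-side selects (so there is no circularity) and that \(\sigma_Y\) is fed the turn \(j\) while \(\sigma_X\) is fed the turn \(k\); this makes each column \(\{\beta(j,k) : k \in \omega\}\) a legitimate Markov run of the \(X\)-game and the across-column sequence a legitimate Markov run of the \(Y\)-game, and it exhibits \(\sigma\) as a genuine Markov strategy.

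To verify winning against a target \(A\), put \(K = \pi_X[A]\) and \(L = \pi_Y[A]\). For each fixed \(j\), the sequence \(\langle \mathscr U_{\beta(j,k)} : k \in \omega\rangle\) with responses at turns \(k\) is a run of the \(k\)-Menger game on \(X\) according to \(\sigma_X\), so there is some step \(M_j\) and a selected \(K_{\ell_j} \in \mathbf F_{\ell_j}\), with \(\ell_j = \beta(j,M_j)\), such that \(K \subseteq \bigcap \pi_X[\mathscr G_{\ell_j}(K_{\ell_j})]\); thus each column contributes one \(X\)-good product turn. Feeding \(\langle \mathscr V_{\ell_j}(K_{\ell_j}) : j \in \omega\rangle\) (at turns \(j\)) through the winning \(\sigma_Y\) produces some \(w\) with \(L\) contained in a member of \(\pi_Y[\mathscr G_{\ell_w}(K_{\ell_w})]\). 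At the single product turn \(\ell_w\) both coordinates then succeed on one rectangle \(W \in \mathscr G_{\ell_w}(K_{\ell_w}) \subseteq \sigma(\mathscr W_{\ell_w}, \ell_w)\), giving \(K \subseteq \pi_X[W]\) and \(L \subseteq \pi_Y[W]\), whence \(A \subseteq W\). I expect the main obstacle to be exactly this coordination: a single Markov run of the \(X\)-game captures \(K\) at only finitely many turns, so to hand the \(Y\)-game infinitely many attempts at \(L\) — every one of them anchored at a turn where the \(X\)-game has already captured \(K\) — one must spread the \(X\)-game across the infinitely many independent columns supplied by \(\beta\) and let the \(Y\)-game range over those columns.
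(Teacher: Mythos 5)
Your proposal is correct and follows essentially the same route as the paper: part (1) via the nearly-hemicompact characterization of Corollary \ref{cor:NearlyHemicompact} together with The Wallace Theorem, and part (2) by running \(\sigma_X\) along the columns and \(\sigma_Y\) across them via a plain bijection \(\omega^2\to\omega\), computing the \(Y\)-side responses for all \(K\in\mathsf K(X)\) before letting \(\sigma_X\) select a finite set of them. The only difference from the paper's write-up is the cosmetic swap of which coordinate of \(\beta\) indexes which factor game.
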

\begin{proof}
    We start by proving \ref{MarkovKRothProd} by using the characterization of Corollary \ref{cor:NearlyHemicompact}.
    So let \(\{ K_n : n \in \omega \}\) and \(\{ L_n : n \in \omega \}\) be families of compact subsets of
    \(X\) and \(Y\), respectively, that witness that they are nearly hemicompact.
    Note that \(\{ K_n \times L_m : n,m \in \omega \}\) is a countable set of compact subsets of \(X \times Y\).
    Now, let \(E \subseteq X \times Y\) be an arbitrary compact set.
    Note that \(\pi_X[E]\) and \(\pi_Y[E]\) are compact subsets of \(X\) and \(Y\), respectively.
    So we can let \(j \in \omega\) and \(k\in \omega\) be such that \(\pi_X[E] \subseteq \bigcap \mathcal N_{K_j}\)
    and \(\pi_Y[E] \subseteq \bigcap \mathcal N_{L_k}\).

    Now, let \(W\) be an arbitrary open subset of \(X \times Y\) with \(K_j \times L_k \subseteq W\).
    By The Wallace Theorem \cite[Thm. 3.2.10]{Engelking}, there are open subsets \(U\) and \(V\) of \(X\) and \(Y\), respectively,
    such that \(K_j \times L_k \subseteq U \times V \subseteq W\).
    Finally, note that \[E \subseteq \pi_X[E] \times \pi_Y[E] \subseteq U \times V \subseteq W.\]
    That is, \(E \subseteq \bigcap \mathcal N_{K_j \times L_k}\).

    We now prove \ref{MarkovKMengerProd}.
    Let \(\sigma_X\) and \(\sigma_Y\) be Markov strategies for P2 in the \(k\)-Menger games on \(X\) and \(Y\), respectively.
    Let \(\beta : \omega^2 \to \omega\) be a bijection, \(\mathbb{BK}\) be as it was defined in Theorem \ref{StratKRothProductive},
    and \(n = \beta(j,k)\).
    For each \(\mathscr W \in \mathbb{BK}\) and \(K \in \mathsf K(X)\), define
    \[\left.\mathscr W\right|_K = \{ W \in \mathscr W : K \subseteq \pi_X[W] \}.\]
    Fix a choice function \(\gamma : \mathbb{BK} \times \mathsf K(X) \times \omega \to [\mathscr T_{X \times Y}]^{<\omega}\)
    to be such that \(\gamma(\mathscr W, K , k) \in \left[ \left.\mathscr W\right|_K \right]^{<\omega}\)
    for each \((\mathscr W , K) \in \mathbb{BK} \times \mathsf K(X)\)
    and \[\pi_Y[\gamma(\mathscr W, K , k)] = \sigma_Y\left(\pi_Y\left[\left. \mathscr W \right|_K \right],k\right).\]
    Note that \[\mathscr W^{X,k}:= \left\{ \bigcap \pi_X[\gamma(\mathscr W, K , k)] : K \in \mathsf K(X) \right\} \in \mathcal K_X.\]
    Let \(\mathscr F(\mathscr W,j,k) \in [\mathsf K(X)]^{<\omega}\) be such that
    \[\sigma_X(\mathscr W^{X,k},j) = \left\{ \bigcap \pi_X[\gamma(\mathscr W, K , k)]: K \in \mathscr F(\mathscr W,j,k) \right\}.\]
    Then define
    \[\sigma(\mathscr W, n ) = \sigma(\mathscr W, \beta(j,k))
    = \bigcup \left\{ \gamma(\mathscr W, K , k) : K \in \mathscr F(\mathscr W,j,k) \right\}
    \in \left[ \mathscr W \right]^{<\omega}.\]
    Note that \(\sigma\) is a Markov strategy for P2 in the \(k\)-Menger game on \(X \times Y\).
    We will show that it is winning.
    
    Let \(\langle \mathscr W_n : n \in \omega \rangle\) be a sequence of \(\mathbb{BK}\)
    and let \(E \subseteq X \times Y\) be compact.
    For every \(k \in \omega\), \[\bigcup\left\{ \sigma_X\left(\mathscr W_{\beta(j,k)}^{X,k},j\right) : j \in \omega \right\} \in \mathcal K_X.\]
    So we can choose \(j_k \in \omega\) and \(K_{j_k,k} \in \mathscr F(\mathscr W_{\beta(j_k,k)}, j_k , k)\)
    for every \(k \in \omega\) such that
    \[\pi_X[E] \subseteq \bigcap \pi_X\left[ \gamma\left( \mathscr W_{\beta(j_k,k)}, K_{j_k,k} , k \right) \right].\]
    Now, \[\bigcup\left\{ \sigma_Y\left( \pi_Y \left[ \left. \mathscr W_{\beta(j_k,k)} \right|_{K_{j_k,k}} \right],k\right) : k \in \omega \right\} \in \mathcal K_Y,\]
    so we can fix \(m \in \omega\) and \(W \in \gamma\left( \mathscr W_{\beta(j_m,m)}, K_{j_m,m}, m \right)\) such that
    \[\pi_Y[E] \subseteq \pi_Y[W] \in \sigma_Y\left( \pi_Y\left[ \left. \mathscr W_{\beta(j_m,m)} \right|_{K_{j_m,m}} \right], m\right).\]
    Note also that, since \(W \in \gamma\left( \mathscr W_{\beta(j_m,m)}, K_{j_m,m}, m \right)\),
    \[\pi_X[E] \subseteq \bigcap \pi_X\left[ \gamma\left( \mathscr W_{\beta(j_m,m)}, K_{j_m,m} , m \right) \right] \subseteq \pi_X[W].\]
    Hence, \[E \subseteq \pi_X[E] \times \pi_Y[E] \subseteq \pi_X[W] \times \pi_Y[W] = W.\]
    Finally, by construction \(W \in \sigma(\mathscr W_{\beta(j_m,m)}, \beta(j_m,m))\), so \(\sigma\) is winning.
\end{proof}

Inspired by the property equivalent to being Markov \(\omega\)-Menger, the property of being \(\sigma\)-relatively compact,
we introduce modifications to hemicompactness.

\begin{definition}
    A space \(X\) is \emph{relatively hemicompact} if \(\mathrm{cof}(\mathsf K_{\mathrm{rel}}(X),\mathsf K_{\mathrm{rel}}(X)) \leq \omega\);
    in other words, if there exists a countable set \(\{ A_n : n \in \omega \}\) of
    sets that are relatively compact in \(X\) such that, for every relatively compact \(E\) in \(X\),
    there is some \(n \in \omega\) such that \(E \subseteq A_n\).
\end{definition}
\begin{definition}
    A space \(X\) is \emph{weakly relatively hemicompact} if
    \(\mathrm{cof}(\mathsf K_{\mathrm{rel}}(X),\mathsf K(X)) \leq \omega\); in other words,
    if there exists a countable set \(\{ A_n : n \in \omega \}\) of relatively compact subsets of \(X\) such that,
    for each compact \(K \subseteq X\), there exists \(n\in \omega\) with \(K \subseteq A_n\).
\end{definition}
We use the adjective ``weakly'' here since every compact set is relatively compact.
Hence, every relatively hemicompact space is weakly relatively hemicompact, but the converse may not obtain.
\begin{remark} \label{rmk:relHemiImplications}
    In general,
    \begin{center}
        \begin{tikzpicture}
            \node (hemicompact) at (-4.5,1) {hemicompact};
            \node (relHemicompact) at (-4.5,0) {relatively hemicompact};
            \node (weaklyRelHemicompact) at (1,0) {weakly relatively hemicompact};
            \node (sigmaRelCompact) at (6.5,0) {\(\sigma\)-relatively compact};
            \draw [-implies,double equal sign distance] (hemicompact.east) -- (weaklyRelHemicompact);
            \draw [-implies,double equal sign distance] (relHemicompact) -- (weaklyRelHemicompact);
            \draw [-implies,double equal sign distance] (weaklyRelHemicompact) -- (sigmaRelCompact);
        \end{tikzpicture}
    \end{center}
\end{remark}
\begin{lemma} \label{lem:RegularRelHemi}
    In the realm of regular spaces, the properties of being hemicompact, relatively hemicompact, and weakly relatively hemicompact
    are all equivalent.
\end{lemma}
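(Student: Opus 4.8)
The plan is to close the cycle of implications already recorded in Remark \ref{rmk:relHemiImplications}. There it is noted that in any space, both hemicompactness and relative hemicompactness imply weak relative hemicompactness. Thus, to establish the equivalence of all three in the regular setting, it suffices to prove the two reverse implications: that a regular weakly relatively hemicompact space is both hemicompact and relatively hemicompact. The single tool that powers both arguments is Lemma \ref{lem:RelCompactRegular}, which guarantees that in a regular space the closure of a relatively compact set is compact.

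First I would fix a regular space $X$ that is weakly relatively hemicompact and let $\{A_n : n \in \omega\} \subseteq \mathsf K_{\mathrm{rel}}(X)$ be a witnessing family; that is, every compact $K \subseteq X$ is contained in some $A_n$. For hemicompactness, I would pass to closures: by Lemma \ref{lem:RelCompactRegular} each $\overline{A_n}$ is compact (and non-empty, since each $A_n$ is non-empty relatively compact), so $\{\overline{A_n} : n \in \omega\}$ is a countable subfamily of $\mathsf K(X)$. Given any compact $K \subseteq X$, weak relative hemicompactness supplies $n$ with $K \subseteq A_n \subseteq \overline{A_n}$, so this family is cofinal in $\mathsf K(X)$, witnessing $\mathrm{cof}(\mathsf K(X), \mathsf K(X)) \leq \omega$.

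For relative hemicompactness I would use the \emph{same} family $\{A_n : n \in \omega\}$ directly, without taking closures. Given any relatively compact $E \subseteq X$, Lemma \ref{lem:RelCompactRegular} tells us that $\overline{E}$ is compact, so the defining property of $\{A_n\}$ yields some $n$ with $\overline{E} \subseteq A_n$, whence $E \subseteq A_n$. Therefore $\{A_n : n \in \omega\}$ is cofinal in $\mathsf K_{\mathrm{rel}}(X)$, giving $\mathrm{cof}(\mathsf K_{\mathrm{rel}}(X), \mathsf K_{\mathrm{rel}}(X)) \leq \omega$. Combining these two implications with the general ones of Remark \ref{rmk:relHemiImplications} closes the loop and yields the equivalence of all three properties.

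There is no genuine obstacle here; the proof is a short assembly of Lemma \ref{lem:RelCompactRegular} with the cofinality definitions. The only point requiring a modicum of care is the asymmetry in how regularity is invoked: to \emph{produce} compact witnesses for hemicompactness one closes the $A_n$, whereas to \emph{absorb} an arbitrary relatively compact $E$ one instead closes $E$ and exploits that its compact closure is already captured by the witnessing family.
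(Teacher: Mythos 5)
Your proof is correct and uses essentially the same approach as the paper: both arguments hinge entirely on Lemma \ref{lem:RelCompactRegular} (closures of relatively compact sets are compact in regular spaces) together with the cofinality definitions. The only difference is organizational — the paper proves hemicompact $\iff$ relatively hemicompact directly and then deduces the weak version, whereas you prove both reverse implications starting from the weakest property and close the cycle via Remark \ref{rmk:relHemiImplications}; the mathematical content is identical.
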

\begin{proof}
    If \(X\) is regular and relatively hemicompact, we start by letting \(\{ A_n : n \in \omega \}\)
    be a sequence of relatively compact subsets witnessing relative hemicompactness for \(X\).
    Since \(X\) is regular, \(\mathrm{cl}_X(A_n)\) is compact for each \(n \in \omega\) by Lemma \ref{lem:RelCompactRegular}.
    To see that \(X\) is hemicompact, consider any \(K \subseteq X\) compact.
    Since \(K\) is also relatively compact, there is some \(n\in\omega\) such that \(K \subseteq A_n \subseteq \mathrm{cl}_X(A_n)\).
    So \(X\) is hemicompact.
    
    Now assume \(X\) is regular and hemicompact.
    Let \(\{ K_n : n \in \omega \}\) be a set of compact subsets witnessing the hemicompactness of \(X\).
    Note that each \(K_n\) is also relatively compact.
    So consider \(A \subseteq X\) which is relatively compact.
    Since \(X\) is regular, \(\mathrm{cl}_X(A)\) is compact, and thus, there is some \(n\in\omega\)
    such that \(A \subseteq \mathrm{cl}_X(A) \subseteq K_n\).
    That is, \(X\) is relatively hemicompact.

    Now that we've shown that the properties of being hemicompact and relatively hemicompact are equivalent
    in the realm of regular spaces, we finish the proof by showing that a regular weakly relatively hemicompact
    space is hemicompact.
    This follows immediately from the fact that the closure of a relatively compact subset of a regular space
    is compact.
\end{proof}

To make some general connections to the property of being Markov \(k\)-Menger,
we introduce a natural modification to the notion of \(k\)-covers relative to the
family of relatively compact subsets of a space.
\begin{definition}
    Let \(\mathcal K_X^{\mathrm{rel}} = \mathcal O_X(\mathsf K_{\mathrm{rel}}(X))\), the collection of all open covers \(\mathscr U\) such that,
    for every relatively compact \(A\) in \(X\), there is some \(U \in \mathscr U\) such that \(A \subseteq U\).
    We will refer to these covers as \emph{relative \(k\)-covers}; we will refer to
    \(\mathsf G_{\mathrm{fin}}(\mathcal K_X^{\mathrm{rel}},\mathcal K_X^{\mathrm{rel}})\) and
    \(\mathsf G_{1}(\mathcal K_X^{\mathrm{rel}},\mathcal K_X^{\mathrm{rel}})\) as the \emph{relative \(k\)-Menger game}
    and the \emph{relative \(k\)-Rothberger game} on \(X\), respectively.
\end{definition}
The following lemma follows immediately from the definitions.
\begin{lemma} \label{lem:OpenCoverClosedUndefFinite}
    If \(\mathscr U\) is an open cover of a space \(X\), then
    \[\mathscr U^{\mathrm{fin}} := \left\{ \bigcup \mathscr F : \mathscr F \in \left[ \mathscr U \right]^{<\omega} \right\}
    \in \mathcal K_X^{\mathrm{rel}}.\]
\end{lemma}

\begin{lemma} \label{lem:BasicMarkovRelCompact}
    For any space \(X\), if \(\varphi : \mathcal K_X^{\mathrm{rel}} \to \left[ \mathscr T_X \right]^{<\omega}\)
    is such that \(\varphi(\mathscr U) \in \left[ \mathscr U \right]^{<\omega}\) for each
    \(\mathscr U \in \mathcal K_X^{\mathrm{rel}}\),
    then \[A := \bigcap \left\{ \bigcup \varphi\left(\mathscr U^{\mathrm{fin}}\right) : \mathscr U \in \mathcal O_X \right\}\]
    is relatively compact in \(X\).
\end{lemma}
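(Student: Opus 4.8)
The plan is to unwind the definition of relative compactness directly against the definition of $A$. By the definition given earlier, $A$ is relatively compact in $X$ precisely when every open cover of $X$ admits a finite subset covering $A$; so I would fix an arbitrary $\mathscr{U} \in \mathcal{O}_X$ and exhibit such a finite subcollection of $\mathscr{U}$.

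First I would observe that, by Lemma \ref{lem:OpenCoverClosedUndefFinite}, the finite-union closure $\mathscr{U}^{\mathrm{fin}}$ lies in $\mathcal{K}_X^{\mathrm{rel}}$, so $\varphi(\mathscr{U}^{\mathrm{fin}})$ is defined and, by hypothesis on $\varphi$, is a finite subset of $\mathscr{U}^{\mathrm{fin}}$. Writing $\varphi(\mathscr{U}^{\mathrm{fin}}) = \{ G_0, \ldots, G_m \}$ with each $G_i = \bigcup \mathscr{F}_i$ for some $\mathscr{F}_i \in [\mathscr{U}]^{<\omega}$, the set $\mathscr{F} := \bigcup_{i \leq m} \mathscr{F}_i$ is a finite subset of $\mathscr{U}$ satisfying $\bigcup \varphi(\mathscr{U}^{\mathrm{fin}}) = \bigcup \mathscr{F}$, since a finite union of members of $\mathscr{U}^{\mathrm{fin}}$ collapses to a single finite union of members of $\mathscr{U}$.

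Next I would invoke the definition of $A$ as an intersection: because $\mathscr{U} \in \mathcal{O}_X$, the set $\bigcup \varphi(\mathscr{U}^{\mathrm{fin}})$ is one of the sets being intersected, so $A \subseteq \bigcup \varphi(\mathscr{U}^{\mathrm{fin}}) = \bigcup \mathscr{F}$. Hence $\mathscr{F}$ is a finite subset of $\mathscr{U}$ that covers $A$, and since $\mathscr{U}$ was arbitrary, $A$ is relatively compact in $X$.

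I do not anticipate a serious obstacle here; the only point requiring care is the bookkeeping that a finite union of members of $\mathscr{U}^{\mathrm{fin}}$ reduces to a finite union of members of $\mathscr{U}$, which is exactly what allows the intersection defining $A$ to bound $A$ inside an honest finite subcover drawn from $\mathscr{U}$ itself.
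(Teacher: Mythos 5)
Your proposal is correct and follows essentially the same route as the paper's proof: apply Lemma \ref{lem:OpenCoverClosedUndefFinite} to see that \(\varphi(\mathscr U^{\mathrm{fin}})\) is defined, note that \(A\) is contained in \(\bigcup\varphi(\mathscr U^{\mathrm{fin}})\) because that set is one of the sets being intersected, and unpack the finitely many members of \(\mathscr U^{\mathrm{fin}}\) into a finite subfamily of \(\mathscr U\) covering \(A\). The only difference is that you spell out the bookkeeping of that last unpacking explicitly, which the paper leaves implicit.
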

\begin{proof}
    By Lemma \ref{lem:OpenCoverClosedUndefFinite}, \(A\) is defined.
    So we need only show that \(A\) is relatively compact in \(X\).
    Indeed, consider any open cover \(\mathscr U\) of \(X\).
    Then \(A \subseteq \bigcup \varphi\left( \mathscr U^{\mathrm{fin}} \right)\).
    Since \(\varphi\left( \mathscr U^{\mathrm{fin}} \right) \in \left[ \mathscr U^{\mathrm{fin}} \right]^{<\omega}\),
    we see that \(A\) is covered by a finite subset of \(\mathscr U\).
\end{proof}

\begin{proposition} \label{prop:WeirdGeneralThing}
    For any space \(X\), the following are equivalent:
    \begin{enumerate}[label=(\alph*),ref=(\alph*)]
        \item \label{cofinalityCondition}
        \(\mathrm{cof}_X(\mathsf K_{\mathrm{rel}}(X),\mathcal B) \leq \omega\).
        \item  \label{MengerCofinality}
        \(\mathrm{II} \underset{\mathrm{mark}}{\uparrow}
        \mathsf G_{\mathrm{fin}}(\mathcal K^{\mathrm{rel}}_X, \mathcal O_X(\mathcal B))\).
        \item \label{RothbergerCofinality}
        \(\mathrm{II} \underset{\mathrm{mark}}{\uparrow} \mathsf G_1(\mathcal K^{\mathrm{rel}}_X, \mathcal O_X(\mathcal B))\).
    \end{enumerate}
\end{proposition}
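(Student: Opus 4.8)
The plan is to establish the cycle \ref{cofinalityCondition} $\implies$ \ref{RothbergerCofinality} $\implies$ \ref{MengerCofinality} $\implies$ \ref{cofinalityCondition}, which gives the full three-way equivalence while only requiring one nontrivial construction in each direction (the middle step being free).

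For \ref{cofinalityCondition} $\implies$ \ref{RothbergerCofinality}, I would assume $\{A_n : n \in \omega\} \subseteq \mathsf K_{\mathrm{rel}}(X)$ witnesses $\mathrm{cof}_X(\mathsf K_{\mathrm{rel}}(X), \mathcal B) \leq \omega$ and build a winning Markov strategy for P2 in the single-selection game directly. Given a relative $k$-cover $\mathscr U \in \mathcal K^{\mathrm{rel}}_X$ and a round number $n$, since $A_n$ is relatively compact and $\mathscr U$ is a relative $k$-cover, some member of $\mathscr U$ contains $A_n$; let $\sigma(\mathscr U, n)$ be a choice of such a member. For any play $\langle \mathscr U_n : n \in \omega \rangle$, each selection satisfies $A_n \subseteq \sigma(\mathscr U_n, n)$, and given $B \in \mathcal B$, cofinality produces $n$ with $B \subseteq A_n \subseteq \sigma(\mathscr U_n, n)$; hence the selections form a member of $\mathcal O_X(\mathcal B)$ and $\sigma$ is winning. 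The implication \ref{RothbergerCofinality} $\implies$ \ref{MengerCofinality} is then immediate from the general fact $\mathsf G_1(\mathcal A, \mathcal B) \leq^+_{\mathrm{II}} \mathsf G_{\mathrm{fin}}(\mathcal A, \mathcal B)$ recorded earlier, since $\leq_{\mathrm{II}}$ transfers the existence of a winning Markov strategy for P2.

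The substance is in \ref{MengerCofinality} $\implies$ \ref{cofinalityCondition}, where Lemma \ref{lem:BasicMarkovRelCompact} does the heavy lifting. Fix a winning Markov strategy $\sigma$ for P2 in $\mathsf G_{\mathrm{fin}}(\mathcal K^{\mathrm{rel}}_X, \mathcal O_X(\mathcal B))$. For each fixed $n$, the map $\mathscr U \mapsto \sigma(\mathscr U, n)$ is a function $\mathcal K^{\mathrm{rel}}_X \to [\mathscr T_X]^{<\omega}$ with $\sigma(\mathscr U,n) \in [\mathscr U]^{<\omega}$, so Lemma \ref{lem:BasicMarkovRelCompact} yields that
\[
A_n := \bigcap \left\{ \bigcup \sigma(\mathscr U^{\mathrm{fin}}, n) : \mathscr U \in \mathcal O_X \right\}
\]
is relatively compact in $X$. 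I claim $\{A_n : n \in \omega\}$ witnesses \ref{cofinalityCondition}. If not, there is $B \in \mathcal B$ with $B \not\subseteq A_n$ for every $n$, so for each $n$ one can select an open cover $\mathscr U_n \in \mathcal O_X$ with $B \not\subseteq \bigcup \sigma(\mathscr U_n^{\mathrm{fin}}, n)$. By Lemma \ref{lem:OpenCoverClosedUndefFinite} each $\mathscr U_n^{\mathrm{fin}} \in \mathcal K^{\mathrm{rel}}_X$ is a legal move for P1, so I consider the play in which P1 plays $\mathscr U_n^{\mathrm{fin}}$ in round $n$ and P2 responds by $\sigma(\mathscr U_n^{\mathrm{fin}}, n)$. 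Were the outcome $\bigcup\{\sigma(\mathscr U_n^{\mathrm{fin}}, n) : n \in \omega\}$ in $\mathcal O_X(\mathcal B)$, some selected open set $U$ from some round $m$ would satisfy $B \subseteq U \subseteq \bigcup \sigma(\mathscr U_m^{\mathrm{fin}}, m)$, contradicting the choice of $\mathscr U_m$; thus $\sigma$ loses, contradicting that it is winning.

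The only place needing care is the last contradiction, where one must distinguish ``$B$ is not covered by the \emph{union} of the round-$n$ selection'' from ``$B$ is not a subset of any \emph{single} selected member'': the former is what the construction of $\mathscr U_n$ guarantees, while membership in $\mathcal O_X(\mathcal B)$ requires the latter, and the two are reconciled by the trivial observation that containment in a single member forces containment in the union. The conceptual load is entirely carried by recognizing that Lemma \ref{lem:BasicMarkovRelCompact} is engineered precisely to convert the round-$n$ slice of a Markov strategy into a relatively compact target set; once that is identified, each step is routine.
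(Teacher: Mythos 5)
Your proposal is correct and follows essentially the same route as the paper: the same cycle (a)\(\implies\)(c)\(\implies\)(b)\(\implies\)(a), the same direct Markov strategy \(\sigma(\mathscr U,n)\supseteq A_n\) for the first implication, and the same use of Lemma \ref{lem:BasicMarkovRelCompact} to extract the relatively compact sets \(A_n=\bigcap\{\bigcup\sigma(\mathscr U^{\mathrm{fin}},n):\mathscr U\in\mathcal O_X\}\) for the last. The only cosmetic difference is that the paper phrases the final implication contrapositively and names an explicit witness point \(x_n\in B\setminus A_n\), whereas you argue by contradiction; the underlying construction of the defeating play \(\langle\mathscr U_n^{\mathrm{fin}}:n\in\omega\rangle\) is identical.
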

\begin{proof}
    \ref{cofinalityCondition}\(\implies\)\ref{RothbergerCofinality}:
    Let \(\{ A_n : n \in \omega \} \subseteq \mathsf K_{\mathrm{rel}}(X)\) witness that
    \(\mathrm{cof}_X(\mathsf K_{\mathrm{rel}}(X),\mathcal B) \leq \omega\) and define \(\sigma\) in the following way.
    Given \(\mathscr U \in \mathcal K_X^{\mathrm{rel}}\), let \(\sigma(\mathscr U,n) \in \mathscr U\)
    be such that \(A_n \subseteq \sigma(\mathscr U,n)\).
    Then \(\sigma\) is a winning Markov strategy.
    Indeed, for any sequence \(\langle \mathscr U_n : n \in \omega \rangle\)
    of relative \(k\)-covers of \(X\), if \(B \in \mathcal B\), there is some \(n\in\omega\)
    such that \(B \subseteq A_n \subseteq \sigma(\mathscr U_n , n)\).

    \ref{RothbergerCofinality}\(\implies\)\ref{MengerCofinality} is obvious.

    To prove \ref{MengerCofinality}\(\implies\)\ref{cofinalityCondition}, we proceed by the contrapositive.
    Let \(\sigma\) be a Markov strategy for P2 in
    \(\mathsf G_{\mathrm{fin}}(\mathcal K^{\mathrm{rel}}_X, \mathcal O_X(\mathcal B))\)
    and define
    \[A_n = \bigcap \left\{ \bigcup \sigma\left(\mathscr U^{\mathrm{fin}},n\right) : \mathscr U \in \mathcal O_X \right\}.\]
    By Lemma \ref{lem:BasicMarkovRelCompact}, \(A_n\) is relatively compact for each \(n\in\omega\).
    
    By hypothesis, there is some \(B \in \mathcal B\) that is not covered by any \(A_n\).
    So let \(x_n \in B \setminus A_n\) for each \(n \in \omega\).
    In the \(n^{\mathrm{th}}\) inning, let P1 choose \(\mathscr U_n \in \mathcal O_X\)
    such that \[x_n \not\in \bigcup \sigma \left( \mathscr U_n^{\mathrm{fin}} , n \right).\]
    It follows that \(\langle \mathscr U_n^{\mathrm{fin}} : n \in \omega \rangle\)
    is a play by P1 that beats \(\sigma\).
\end{proof}

\begin{lemma} \label{lem:MarkovKMengerRelHemi}
    If \(X\) is Markov \(k\)-Menger, then \(X\) is weakly relatively hemicompact.
\end{lemma}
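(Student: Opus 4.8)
The plan is to recognize this as the special case $\mathcal B = \mathsf K(X)$ of Proposition \ref{prop:WeirdGeneralThing} and then absorb a harmless mismatch in the players' move sets. First I would observe that weak relative hemicompactness is exactly the assertion $\mathrm{cof}_X(\mathsf K_{\mathrm{rel}}(X), \mathsf K(X)) \leq \omega$, and that $\mathcal O_X(\mathsf K(X)) = \mathcal K_X$: requiring every nonempty compact subset to sit inside some member of an open cover is precisely the defining condition of a $k$-cover (the empty compact set is trivially contained). Hence Proposition \ref{prop:WeirdGeneralThing}, instantiated at $\mathcal B = \mathsf K(X)$, states that $X$ is weakly relatively hemicompact if and only if \(\mathrm{II} \underset{\mathrm{mark}}{\uparrow} \mathsf G_{\mathrm{fin}}(\mathcal K_X^{\mathrm{rel}}, \mathcal K_X)\).

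The only gap between this equivalence and the hypothesis is that being Markov \(k\)-Menger asserts \(\mathrm{II} \underset{\mathrm{mark}}{\uparrow} \mathsf G_{\mathrm{fin}}(\mathcal K_X, \mathcal K_X)\), where P1 may play arbitrary \(k\)-covers rather than merely relative \(k\)-covers. Since every compact set is relatively compact, every relative \(k\)-cover is a \(k\)-cover, so \(\mathcal K_X^{\mathrm{rel}} \subseteq \mathcal K_X\). Shrinking the collection from which P1 draws its moves can only help P2, so I would argue that a winning Markov strategy \(\sigma\) for P2 in \(\mathsf G_{\mathrm{fin}}(\mathcal K_X, \mathcal K_X)\) restricts to a winning Markov strategy in \(\mathsf G_{\mathrm{fin}}(\mathcal K_X^{\mathrm{rel}}, \mathcal K_X)\): any play against the restriction of \(\sigma\) to \(\mathcal K_X^{\mathrm{rel}} \times \omega\) is verbatim a play against \(\sigma\) in which P1 happened to choose only relative \(k\)-covers, and such a play is won by P2. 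Combining the two paragraphs then yields that \(X\) is weakly relatively hemicompact.

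The work here is essentially bookkeeping rather than substance, so the main point to get right is the identification that \(\mathcal O_X(\mathsf K(X))\) collapses to \(\mathcal K_X\), that the relevant cofinality condition is exactly the property being targeted, and that the game reduction runs in the favorable direction because \(\mathcal K_X^{\mathrm{rel}} \subseteq \mathcal K_X\). Should a self-contained argument be preferred over citing Proposition \ref{prop:WeirdGeneralThing}, I would instead take a winning Markov strategy \(\sigma\) for P2, set \(A_n = \bigcap\{\bigcup \sigma(\mathscr U^{\mathrm{fin}}, n) : \mathscr U \in \mathcal O_X\}\), invoke Lemma \ref{lem:BasicMarkovRelCompact} to see each \(A_n\) is relatively compact, and then show every compact \(K\) lies in some \(A_n\) by contradiction: choosing \(x_n \in K \setminus A_n\) and \(\mathscr U_n \in \mathcal O_X\) with \(x_n \notin \bigcup \sigma(\mathscr U_n^{\mathrm{fin}}, n)\) produces a play \(\langle \mathscr U_n^{\mathrm{fin}} : n \in \omega \rangle\) against which \(\sigma\) fails, since each member of \(\bigcup_n \sigma(\mathscr U_n^{\mathrm{fin}}, n)\) omits some point of the compact set \(K\) and hence cannot contain \(K\).
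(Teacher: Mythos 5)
Your proposal is correct and follows essentially the same route as the paper: the paper's proof likewise observes that \(\mathcal K_X^{\mathrm{rel}} \subseteq \mathcal K_X\) lets P2's winning Markov strategy in \(\mathsf G_{\mathrm{fin}}(\mathcal K_X, \mathcal K_X)\) carry over to \(\mathsf G_{\mathrm{fin}}(\mathcal K_X^{\mathrm{rel}}, \mathcal K_X)\), and then invokes Proposition \ref{prop:WeirdGeneralThing} with \(\mathcal B = \mathsf K(X)\) to conclude \(\mathrm{cof}_X(\mathsf K_{\mathrm{rel}}(X),\mathsf K(X)) \leq \omega\). Your added remarks (the identification \(\mathcal O_X(\mathsf K(X)) = \mathcal K_X\) and the optional self-contained unfolding) are accurate but not needed beyond what the paper records.
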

\begin{proof}
    First, note that
    \[\mathrm{II} \underset{\mathrm{mark}}{\uparrow} \mathsf G_{\mathrm{fin}}(\mathcal K_X, \mathcal K_X)
    \implies \mathrm{II} \underset{\mathrm{mark}}{\uparrow} \mathsf G_{\mathrm{fin}}(\mathcal K^{\mathrm{rel}}_X, \mathcal K_X)\]
    since \(\mathcal K_X^{\mathrm{rel}} \subseteq \mathcal K_X\) (which follows from the fact that every compact set is relatively compact).
    So then Proposition \ref{prop:WeirdGeneralThing} applies to assert that \(\mathrm{cof}_X(\mathsf K_{\mathrm{rel}}(X),\mathsf K(X)) \leq \omega\);
    that is, \(X\) is weakly relatively hemicompact.
\end{proof}

We now collect some particular applications of Proposition \ref{prop:WeirdGeneralThing}.
\begin{corollary} \label{cor:CorToWeird}
    Let \(X\) be a space.
    \begin{enumerate}[ref=(\arabic*)]
        \item \label{cor:RelHemiChar}
        The following are equivalent:
        \begin{enumerate}[label=(\alph*),ref=(\alph*)]
            \item
            \(X\) is relatively hemicompact.
            \item
            \(\mathrm{II} \underset{\mathrm{mark}}{\uparrow} \mathsf G_{\mathrm{fin}}(\mathcal K^{\mathrm{rel}}_X, \mathcal K^{\mathrm{rel}}_X)\).
            \item
            \(\mathrm{II} \underset{\mathrm{mark}}{\uparrow} \mathsf G_1(\mathcal K^{\mathrm{rel}}_X, \mathcal K^{\mathrm{rel}}_X)\).
        \end{enumerate}
        \item \label{cor:WeaklyRelHemiChar}
        The following are equivalent:
        \begin{enumerate}[label=(\alph*),ref=(\alph*)]
            \item
            \(X\) is weakly relatively hemicompact.
            \item
            \(\mathrm{II} \underset{\mathrm{mark}}{\uparrow} \mathsf G_{\mathrm{fin}}(\mathcal K^{\mathrm{rel}}_X, \mathcal K_X)\).
            \item
            \(\mathrm{II} \underset{\mathrm{mark}}{\uparrow} \mathsf G_1(\mathcal K^{\mathrm{rel}}_X, \mathcal K_X)\).
        \end{enumerate}
        \item \label{cor:SigmaRelChar}
        The following are equivalent:
        \begin{enumerate}[label=(\alph*),ref=(\alph*)]
            \item
            \(X\) is \(\sigma\)-relatively compact.
            \item
            \(\mathrm{II} \underset{\mathrm{mark}}{\uparrow} \mathsf G_{\mathrm{fin}}(\mathcal K^{\mathrm{rel}}_X, \mathcal O_X)\).
            \item
            \(\mathrm{II} \underset{\mathrm{mark}}{\uparrow} \mathsf G_1(\mathcal K^{\mathrm{rel}}_X, \mathcal O_X)\).
        \end{enumerate}
    \end{enumerate}
\end{corollary}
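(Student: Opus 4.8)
The plan is to obtain all three equivalences as direct instances of Proposition \ref{prop:WeirdGeneralThing}, each arising from a particular choice of the parameter collection \(\mathcal B\). The only thing to verify in each case is that condition \ref{cofinalityCondition} of that proposition unwinds to the named covering property, and that the operator \(\mathcal O_X(\mathcal B)\) collapses to the cover collection appearing in the corresponding game.

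First I would handle \ref{cor:RelHemiChar} by setting \(\mathcal B = \mathsf K_{\mathrm{rel}}(X)\). With this choice, condition \ref{cofinalityCondition} reads \(\mathrm{cof}_X(\mathsf K_{\mathrm{rel}}(X), \mathsf K_{\mathrm{rel}}(X)) \leq \omega\), which is exactly the definition of relative hemicompactness, while \(\mathcal O_X(\mathcal B) = \mathcal O_X(\mathsf K_{\mathrm{rel}}(X)) = \mathcal K_X^{\mathrm{rel}}\) by the very definition of relative \(k\)-covers. Thus conditions \ref{MengerCofinality} and \ref{RothbergerCofinality} become the two Markov-strategy statements in \ref{cor:RelHemiChar}, and the proposition supplies the equivalences. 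For \ref{cor:WeaklyRelHemiChar} I would instead take \(\mathcal B = \mathsf K(X)\); then condition \ref{cofinalityCondition} is \(\mathrm{cof}_X(\mathsf K_{\mathrm{rel}}(X), \mathsf K(X)) \leq \omega\), the definition of weak relative hemicompactness, and \(\mathcal O_X(\mathsf K(X)) = \mathcal K_X\) because the \(\mathsf K(X)\)-covers are precisely the \(k\)-covers. Again the two game statements fall out immediately.

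The remaining part, \ref{cor:SigmaRelChar}, is the one needing a small additional observation, and would be handled by taking \(\mathcal B\) to be the collection of singletons \(\{\{x\} : x \in X\}\). Here I would note two collapses. On one hand, \(\mathcal O_X(\mathcal B) = \mathcal O_X\), since an open cover contains, for each point \(x\), a member containing \(\{x\}\) exactly when it covers \(x\), which every open cover does; so conditions \ref{MengerCofinality} and \ref{RothbergerCofinality} become the two statements in \ref{cor:SigmaRelChar}. On the other hand, \(\mathrm{cof}_X(\mathsf K_{\mathrm{rel}}(X), \{\{x\} : x \in X\}) \leq \omega\) asserts the existence of a countable family of relatively compact sets such that every singleton is contained in one of them, i.e. a countable relatively compact cover of \(X\), which is precisely \(\sigma\)-relative compactness, exactly as already noted when \(X\) is identified with its set of singletons.

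None of this presents a genuine obstacle: the real work was done in Proposition \ref{prop:WeirdGeneralThing}, and the corollary is a matter of recognizing the three named properties as the cofinality condition \ref{cofinalityCondition} for the right \(\mathcal B\), together with the corresponding identifications of \(\mathcal O_X(\mathcal B)\). The most error-prone point is simply keeping straight which of \(\mathsf K_{\mathrm{rel}}(X)\), \(\mathsf K(X)\), and the singletons produces \(\mathcal K_X^{\mathrm{rel}}\), \(\mathcal K_X\), and \(\mathcal O_X\), respectively, under the operator \(\mathcal O_X(\cdot)\).
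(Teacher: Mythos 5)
Your proposal is correct and matches the paper's intent exactly: the paper offers no written proof for this corollary, presenting it as an immediate specialization of Proposition \ref{prop:WeirdGeneralThing}, and your three choices of \(\mathcal B\) (namely \(\mathsf K_{\mathrm{rel}}(X)\), \(\mathsf K(X)\), and the singletons) together with the identifications \(\mathcal O_X(\mathsf K_{\mathrm{rel}}(X)) = \mathcal K_X^{\mathrm{rel}}\), \(\mathcal O_X(\mathsf K(X)) = \mathcal K_X\), and \(\mathcal O_X(\{\{x\} : x \in X\}) = \mathcal O_X\) are precisely the intended instantiations. The unwinding of condition \ref{cofinalityCondition} to each named property is also accurate in all three cases.
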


\begin{corollary} \label{cor:HemicompactMarkov}
    For any regular space, the following are equivalent:
    \begin{enumerate}[label=(\alph*)]
        \item
        \(X\) is hemicompact.
        \item
        \(\mathrm{II} \underset{\mathrm{mark}}{\uparrow} \mathsf G_{\mathrm{fin}}(\mathcal K_X, \mathcal K_X)\).
        \item
        \(\mathrm{II} \underset{\mathrm{mark}}{\uparrow} \mathsf G_1(\mathcal K_X, \mathcal K_X)\).
    \end{enumerate}
\end{corollary}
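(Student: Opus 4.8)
The plan is to prove the cycle of implications (a)$\Rightarrow$(c)$\Rightarrow$(b)$\Rightarrow$(a), and the key observation driving the organization is that only the last arrow actually uses regularity; the first two hold for arbitrary spaces, so the regularity hypothesis can be pinpointed precisely.

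For (a)$\Rightarrow$(c), I would first note that hemicompactness implies near hemicompactness with no separation axiom: if \(\{ K_n : n \in \omega \}\) is a countable family of compact sets cofinal in \(\mathsf K(X)\) under inclusion, then since \(K_n \subseteq \bigcap \mathcal N_{K_n}\) for each \(n\), the very same family witnesses \(\widehat{\mathrm{cof}}_X(\mathsf K(X),\mathsf K(X)) \leq \omega\). Corollary \ref{cor:NearlyHemicompact} then delivers that \(X\) is Markov \(k\)-Rothberger, which is exactly (c). For (c)$\Rightarrow$(b), I would invoke the relation \(\mathsf G_1(\mathcal K_X,\mathcal K_X) \leq^+_{\mathrm{II}} \mathsf G_{\mathrm{fin}}(\mathcal K_X,\mathcal K_X)\) recorded just after the definition of \(\leq^+_{\mathrm{II}}\); one of the defining clauses of \(\leq_{\mathrm{II}}\) is precisely that a winning Markov strategy for P2 in the single-selection game transfers to a winning Markov strategy in the finite-selection game, so Markov \(k\)-Rothberger implies Markov \(k\)-Menger. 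No separation axiom is needed for either step.

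The single place where regularity is essential is (b)$\Rightarrow$(a). Here I would chain two earlier results: Lemma \ref{lem:MarkovKMengerRelHemi}, which shows that any Markov \(k\)-Menger space is weakly relatively hemicompact, followed by Lemma \ref{lem:RegularRelHemi}, which asserts that a regular weakly relatively hemicompact space is hemicompact. The genuine content sits in the second of these, as it ultimately rests on Lemma \ref{lem:RelCompactRegular} (the closure of a relatively compact set in a regular space is compact), which is what allows one to promote a relatively compact witnessing family to a compact one.

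I expect the main ``obstacle'' to be conceptual rather than computational: the statement follows entirely by assembling prior lemmas, so the work lies in correctly tracking which of the three arrows are free and isolating regularity as the one hypothesis needed to collapse weak relative hemicompactness back to honest hemicompactness. Consequently the written proof should be short, amounting to the three citations above arranged as a cycle, with an explicit remark that the inclusion \(K_n \subseteq \bigcap \mathcal N_{K_n}\) is what makes hemicompactness suffice for the Markov \(k\)-Rothberger direction.
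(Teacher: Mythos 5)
Your proof is correct, but it takes a genuinely different route from the paper's. The paper does not prove a cycle of implications: it observes that, for regular \(X\), the cover classes \(\mathcal K_X\) and \(\mathcal K_X^{\mathrm{rel}}\) coincide (every compact set is relatively compact, and by Lemma \ref{lem:RelCompactRegular} every relatively compact set sits inside its compact closure, hence inside any member of a \(k\)-cover containing that closure), so the games \(\mathsf G_\square(\mathcal K_X,\mathcal K_X)\) and \(\mathsf G_\square(\mathcal K_X^{\mathrm{rel}},\mathcal K_X^{\mathrm{rel}})\) are literally the same; the three-way equivalence then follows from the already-established Corollary \ref{cor:CorToWeird}\ref{cor:RelHemiChar} together with Lemma \ref{lem:RegularRelHemi}. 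Your decomposition (a)\(\Rightarrow\)(c)\(\Rightarrow\)(b)\(\Rightarrow\)(a) instead routes through Corollary \ref{cor:NearlyHemicompact}, the relation \(\mathsf G_1 \leq^+_{\mathrm{II}} \mathsf G_{\mathrm{fin}}\), and Lemmas \ref{lem:MarkovKMengerRelHemi} and \ref{lem:RegularRelHemi}. What your version buys is an explicit localization of the regularity hypothesis: (a)\(\Rightarrow\)(c) and (c)\(\Rightarrow\)(b) are separation-axiom-free, and only the passage from Markov \(k\)-Menger back to hemicompact uses regularity --- which dovetails with the paper's open question of whether Markov \(k\)-Menger implies Markov \(k\)-Rothberger in general. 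What the paper's version buys is economy: once \(\mathcal K_X = \mathcal K_X^{\mathrm{rel}}\) is observed, no new game-theoretic argument is needed at all, since the relative-cover equivalence was proved in one shot via Proposition \ref{prop:WeirdGeneralThing}. Both arguments ultimately rest on Lemma \ref{lem:RelCompactRegular}, so the essential content is the same; only the bookkeeping differs.
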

\begin{proof}
    By Corollary \ref{cor:CorToWeird}\ref{cor:RelHemiChar} and Lemma \ref{lem:RegularRelHemi}, it suffices to show that 
    \(\mathcal K_X = \mathcal K_X^{\mathrm{rel}}\) when \(X\) is regular.

    Since every compact set is relatively compact, \(\mathcal K_X^{\mathrm{rel}} \subseteq \mathcal K_X\).
    We show that \(\mathcal K_X \subseteq \mathcal K_X^{\mathrm{rel}}\) under the assumption that \(X\) is regular.
    So let \(\mathscr U \in \mathcal K_X\) and suppose \(A \subseteq X\) is relatively compact.
    By Lemma \ref{lem:RelCompactRegular}, \(\mathrm{cl}_X(A)\) is compact,
    so there is some \(U \in \mathscr U\) such that \(\mathrm{cl}_X(A) \subseteq U\).
    That is, \(\mathscr U \in \mathcal K_X^{\mathrm{rel}}\).
\end{proof}

\subsection{Relating the various games} \label{subsection:RelatingTheGames}

The following proposition appears as \cite[Lemma 3.7]{CHVietoris}, but we offer it here as a consequence of
the investigations of this paper without any separation axiom assumptions;
it also summaries many of the implications in Figure \ref{fig:Omega}.
\begin{theorem}
    In general,
    \[\mathsf{G}_{1}(\Omega, \Omega) \leq^+_{\mathrm{II}} \mathsf G_{1}(\mathcal O, \mathcal O).\]
\end{theorem}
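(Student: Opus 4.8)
The plan is to read off all five implications that constitute $\leq^+_{\mathrm{II}}$ directly from results already established, rather than to build a single direct game reduction. Write $\mathcal G = \mathsf G_1(\Omega,\Omega)$ and $\mathcal H = \mathsf G_1(\mathcal O,\mathcal O)$; fixing a space $X$, I must check that P2-favorability of $\mathcal G$ transfers to $\mathcal H$ at each of the five strengths (Markov, full-information, ``P1 has no winning strategy'', predetermined, and constant).

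First I would dispatch the two genuinely game-theoretic clauses. The Markov clause $\mathrm{II}\underset{\mathrm{mark}}{\uparrow}\mathcal G \implies \mathrm{II}\underset{\mathrm{mark}}{\uparrow}\mathcal H$ is precisely ``Markov $\omega$-Rothberger $\implies$ Markov Rothberger,'' which holds because Theorem \ref{thm:MarkovOmega}\ref{thm:MarkovOmegaRothberger} identifies both with topological countability. The full-information clause $\mathrm{II}\uparrow\mathcal G \implies \mathrm{II}\uparrow\mathcal H$ is ``strategically $\omega$-Rothberger $\implies$ strategically Rothberger,'' which is the forward direction of Theorem \ref{thm:StrategicOmega}\ref{StrategicRothChar}. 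In both cases the cited results are in fact equivalences, so these clauses are more than covered.

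Next I would reduce the constant and predetermined clauses to selection-principle facts coming from finite powers. Through Remark \ref{remark:LindelofAndSelection}, $\mathrm{I}\underset{\mathrm{cnst}}{\not\uparrow}\mathcal G$ reads as $\omega$-Lindel\"{o}f and $\mathrm{I}\underset{\mathrm{cnst}}{\not\uparrow}\mathcal H$ as Lindel\"{o}f, so the constant clause is exactly ``$\omega$-Lindel\"{o}f $\implies$ Lindel\"{o}f,'' which follows from Theorem \ref{thm:GerlitsNagy} applied to the power $X = X^1$. Likewise, Remark \ref{remark:LindelofAndSelection} turns the predetermined clause $\mathrm{I}\underset{\mathrm{pre}}{\not\uparrow}\mathcal G \implies \mathrm{I}\underset{\mathrm{pre}}{\not\uparrow}\mathcal H$ into $\mathsf S_1(\Omega,\Omega)\implies\mathsf S_1(\mathcal O,\mathcal O)$, which follows from Theorem \ref{thm:OmegaSelectionChar}\ref{OmegaSelectionRothbergerChar} (again via $X = X^1$, since $\omega$-Rothberger forces every finite power, and in particular $X$ itself, to be Rothberger).

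The one clause I expect to require a bit more bookkeeping, and hence the main (though still routine) obstacle, is $\mathrm{I}\not\uparrow\mathcal G \implies \mathrm{I}\not\uparrow\mathcal H$, since here I must move between the non-existence of a winning P1 strategy and the bare selection principle on each side. I would chain the two ``collapse'' theorems with the selection implication: Scheepers' theorem gives $\mathrm{I}\not\uparrow\mathcal G \iff \mathrm{I}\underset{\mathrm{pre}}{\not\uparrow}\mathcal G$, which by Remark \ref{remark:LindelofAndSelection} is $\mathsf S_1(\Omega,\Omega)$; this implies $\mathsf S_1(\mathcal O,\mathcal O) \iff \mathrm{I}\underset{\mathrm{pre}}{\not\uparrow}\mathcal H$; and the Hurewicz--Pawlikowski theorem upgrades the latter to $\mathrm{I}\not\uparrow\mathcal H$. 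This is the only clause needing both collapse theorems. I would close by noting why one cannot simply build a direct reduction: the natural translation $\mathscr U \mapsto \mathscr U^{\mathrm{fin}}$ sends an open cover to an $\omega$-cover but converts a single P2 selection into a finite union, so it fails to respect single selections; routing every clause through the previously proven equivalences is exactly what sidesteps this difficulty.
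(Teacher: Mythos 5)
Your proposal is correct and follows essentially the same route as the paper, which proves this theorem by citing exactly the four results you invoke (the Gerlits--Nagy characterization, the finite-powers characterization of $\omega$-Rothberger, the strategic equivalence, and the Markov equivalence), one for each strength level of $\leq^+_{\mathrm{II}}$. Your explicit handling of the $\mathrm{I}\not\uparrow$ clause via the Scheepers and Hurewicz--Pawlikowski collapse theorems correctly fills in the detail the paper leaves implicit in its ``follows immediately.''
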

\begin{proof}
    This follows immediately from
    Theorems \ref{thm:GerlitsNagy}, \ref{thm:OmegaSelectionChar}\ref{OmegaSelectionRothbergerChar},
    \ref{thm:StrategicOmega}\ref{StrategicRothChar}, and \ref{thm:MarkovOmega}\ref{thm:MarkovOmegaRothberger}.
\end{proof}

As will be established with Example \ref{example:Reals},
\[\mathsf{G}_1(\mathcal K, \mathcal K) \nleq^+_{\mathrm{II}} \mathsf G_1(\Omega, \Omega),\]
generally.
However, for finite selections, we have the following theorem
which proves the ``\(k\)-Menger \(\implies\) \(\omega\)-Menger'' variations that appear in Figure \ref{fig:KOmega}.
\begin{theorem}\label{thm:KImpliesOmegaImpliesOpen}
    In general,
    \[\mathsf{G}_{\mathrm{fin}}(\mathcal K, \mathcal K)
    \leq^+_{\mathrm{II}} \mathsf{G}_{\mathrm{fin}}(\Omega, \Omega)
    \leq^+_{\mathrm{II}} \mathsf G_{\mathrm{fin}}(\mathcal O, \mathcal O).\]
\end{theorem}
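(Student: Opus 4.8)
The plan is to unfold the two occurrences of \(\leq^+_{\mathrm{II}}\) into their constituent implications (the four clauses defining \(\leq_{\mathrm{II}}\) together with the constant-strategy clause) and to verify each link of the chain separately, drawing on the structural results already assembled. A simplification applies to both links: by the recalled strategy-reduction theorems of Hurewicz--Pawlikowski, Scheepers, and Caruvana--Holshouser, \(\mathrm{I}\uparrow\mathcal G\iff\mathrm{I}\underset{\mathrm{pre}}{\uparrow}\mathcal G\) for each \(\mathcal G\) among \(\mathsf G_{\mathrm{fin}}(\mathcal K,\mathcal K)\), \(\mathsf G_{\mathrm{fin}}(\Omega,\Omega)\), and \(\mathsf G_{\mathrm{fin}}(\mathcal O,\mathcal O)\). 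Hence the \(\mathrm{I}\not\uparrow\) clause coincides with the selection-principle clause \(\mathrm{I}\underset{\mathrm{pre}}{\not\uparrow}\) (Remark \ref{remark:LindelofAndSelection}) and need not be treated on its own, so for each link it remains to handle the Markov level, the strategic level (\(\mathrm{II}\uparrow\)), the selection level \(\mathrm{I}\underset{\mathrm{pre}}{\not\uparrow}\), and the constant level \(\mathrm{I}\underset{\mathrm{cnst}}{\not\uparrow}\).

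For the second link \(\mathsf G_{\mathrm{fin}}(\Omega,\Omega)\leq^+_{\mathrm{II}}\mathsf G_{\mathrm{fin}}(\mathcal O,\mathcal O)\), each level is essentially immediate from an earlier characterization. The Markov level is the implication ``Markov \(\omega\)-Menger \(\Rightarrow\) Markov Menger'' of Theorem \ref{thm:MarkovOmega}\ref{thm:MarkovOmegaMenger}; the strategic level is ``strategically \(\omega\)-Menger \(\Rightarrow\) strategically Menger'' of Theorem \ref{thm:StrategicOmega}\ref{StrategicMengerChar}; and the selection and constant levels are ``\(\omega\)-Menger \(\Rightarrow\) Menger'' and ``\(\omega\)-Lindel\"{o}f \(\Rightarrow\) Lindel\"{o}f'', which are the cases \(n=1\) of Theorems \ref{thm:OmegaSelectionChar}\ref{OmegaSelectionMengerChar} and \ref{thm:GerlitsNagy}.

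The first link \(\mathsf G_{\mathrm{fin}}(\mathcal K,\mathcal K)\leq^+_{\mathrm{II}}\mathsf G_{\mathrm{fin}}(\Omega,\Omega)\) is the substantive one. I would first record the auxiliary reduction \(\mathsf G_{\mathrm{fin}}(\mathcal K,\mathcal K)\leq^+_{\mathrm{II}}\mathsf G_{\mathrm{fin}}(\mathcal O,\mathcal O)\), witnessed by the memoryless translation \(\mathscr U\mapsto\mathscr U^{\mathrm{fin}}\): by Lemma \ref{lem:OpenCoverClosedUndefFinite} the finite-union closure of any open cover is a relative, hence ordinary, \(k\)-cover, so any finite P2 selection from \(\mathscr U^{\mathrm{fin}}\) may be \emph{unfolded} into a finite subset of \(\mathscr U\) covering the same points; since the target winning condition only asks that \(X\) be covered, this yields the reduction at every level. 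In particular it gives ``\(k\)-Menger \(\Rightarrow\) Menger'', ``\(k\)-Lindel\"{o}f \(\Rightarrow\) Lindel\"{o}f'', and ``strategically \(k\)-Menger \(\Rightarrow\) strategically Menger'', each also for every finite power. I would then promote each level to the \(\Omega\) target through finite powers: for selection, \(X\) \(k\)-Menger \(\Rightarrow\) each \(X^n\) \(k\)-Menger (Theorem \ref{thm:kProductive}\ref{kMengerProductive}) \(\Rightarrow\) each \(X^n\) Menger \(\Rightarrow\) \(X\) \(\omega\)-Menger (Theorem \ref{thm:OmegaSelectionChar}\ref{OmegaSelectionMengerChar}); the constant level runs analogously through Theorem \ref{thm:kLindProd} and Theorem \ref{thm:GerlitsNagy}; the strategic level is strategically \(k\)-Menger \(\Rightarrow\) strategically Menger \(\Rightarrow\) strategically \(\omega\)-Menger (Theorem \ref{thm:StrategicOmega}\ref{StrategicMengerChar}); and the Markov level is Markov \(k\)-Menger \(\Rightarrow\) weakly relatively hemicompact (Lemma \ref{lem:MarkovKMengerRelHemi}) \(\Rightarrow\) \(\sigma\)-relatively compact (Remark \ref{rmk:relHemiImplications}) \(\Rightarrow\) Markov \(\omega\)-Menger (Theorem \ref{thm:MarkovOmega}\ref{thm:MarkovOmegaMenger}).

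The main obstacle is exactly why the first link cannot be obtained, like the second, by a single cover-translation morphism. Sending an \(\omega\)-cover \(\mathscr W\) to the \(k\)-cover \(\mathscr W^{\mathrm{fin}}\) and feeding it to a \(k\)-strategy forces us to unfold the returned selection back into members of \(\mathscr W\); but while unfolding preserves the set of covered points, it destroys the \(\omega\)-cover property, since a finite set contained in a finite union \(U_1\cup\dots\cup U_m\) need not lie in any single \(U_i\). This is precisely the reason to route the first link through finite powers, where the \(\omega\)-property on \(X\) is captured by the base property on each \(X^n\) and the finite productivity of the \(k\)-properties carries the load. The remaining work is the routine but careful bookkeeping that each invoked theorem delivers the implication at the correct strategy-complexity level.
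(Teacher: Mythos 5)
Your proposal is correct and follows essentially the same route as the paper: the second link via Theorems \ref{thm:GerlitsNagy}, \ref{thm:OmegaSelectionChar}, \ref{thm:StrategicOmega}, and \ref{thm:MarkovOmega}, and the first link by the finite-union closure translation at the strategic level, finite powers (Theorems \ref{thm:kProductive} and \ref{thm:kLindProd}) at the selection and constant levels, and the chain through weak relative hemicompactness and \(\sigma\)-relative compactness at the Markov level. Your explicit observation that the \(\mathrm{I}\not\uparrow\) clause collapses onto the selection-principle clause via the Hurewicz--Pawlikowski, Scheepers, and Caruvana--Holshouser reduction theorems is a point the paper leaves implicit, but it does not change the argument.
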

\begin{proof}
    The \[\mathsf{G}_{\mathrm{fin}}(\Omega, \Omega)
    \leq^+_{\mathrm{II}} \mathsf G_{\mathrm{fin}}(\mathcal O, \mathcal O)\]
    portion is \cite[Lemma 3.4]{CHVietoris}, but also follows immediately from Theorems
    \ref{thm:GerlitsNagy}, \ref{thm:OmegaSelectionChar}\ref{OmegaSelectionMengerChar},
    \ref{thm:StrategicOmega}\ref{StrategicMengerChar}, and \ref{thm:MarkovOmega}\ref{thm:MarkovOmegaMenger}.

    Now we show that \[\mathsf{G}_{\mathrm{fin}}(\mathcal K, \mathcal K)
    \leq^+_{\mathrm{II}} \mathsf{G}_{\mathrm{fin}}(\Omega, \Omega).\]

    First, assume that \(X\) is Markov \(k\)-Menger.
    By Lemma \ref{lem:MarkovKMengerRelHemi}, \(X\) is weakly relatively hemicompact which implies that
    \(X\) is \(\sigma\)-relatively compact (Remark \ref{rmk:relHemiImplications}).
    Hence, by Theorem \ref{thm:MarkovOmega}, \(X\) is Markov \(\omega\)-Menger.

    Assume \(X\) is strategically \(k\)-Menger.
    By Theorem \ref{thm:StrategicOmega}, it suffices to show that \(X\) is strategically Menger.
    To see this, just take the closure of open covers under finite unions to create \(k\)-covers.
    When you apply P2's strategies to these \(k\)-covers, P2's selections can be seen as
    finite selections from the original open covers.
    Since P2's selections win in the \(k\)-Menger game, the decomposed collections in the Menger game
    must still cover the space.

    Assume \(X\) is \(k\)-Menger.
    Note that any space which is \(k\)-Menger is Menger.
    This follows from similar reasoning as above by closing open covers under finite unions.
    Then, by Theorem \ref{thm:kProductive}\ref{kMengerProductive},
    every finite power of \(X\) is Menger.
    So, by Theorem \ref{thm:OmegaSelectionChar}\ref{OmegaSelectionMengerChar}, \(X\) is
    \(\omega\)-Menger.

    Lastly, assume \(X\) is \(k\)-Lindel\"{o}f.
    Note that any space which is \(k\)-Lindel\"{o}f is
    Lindel\"{o}f by taking an open cover and closing it under finite unions, just as above.
    Then, by Theorem \ref{thm:kLindProd}, every finite power of \(X\) is Lindel\"{o}f.
    Hence, by Theorem \ref{thm:GerlitsNagy}, \(X\) is \(\omega\)-Lindel\"{o}f.
\end{proof}

\section{Examples} \label{section:Examples}

In this section, we provide some examples that separate some of the properties in Figures \ref{fig:KOmega} and \ref{fig:Omega};
we also answer a question posed in \cite{CHCompactOpen}.

Note that \(\mathsf G_1(\mathcal N(\mathsf K(X)), \neg \mathcal K_X)\)
is a variant of the compact-open game, the game in which P1 chooses a compact set and P2 responds with
an open set containing that compact set,
where P2 is trying to avoid forming a \(k\)-cover of \(X\).

Recall also that the finite-open game on \(X\), which corresponds to \(\mathsf G_1(\mathcal N([X]^{<\omega}) , \neg \mathcal O_X)\) in our notation,
is the game in which P1 chooses a finite set and P2 responds
with an open set containing that finite set, where P2 is trying to avoid forming a cover of \(X\).

In \cite{CHCompactOpen}, it was asked if there is a space \(X\)
for which P1 has a winning strategy in \(\mathsf G_1(\mathcal N(\mathsf K(X)), \neg \mathcal K_X)\),
but P1 doesn't have a winning strategy in the finite-open game on \(X\) and P1 doesn't have a predetermined
winning strategy in \(\mathsf G_1(\mathcal N(\mathsf K(X)), \neg \mathcal K_X)\).
We can provide a straightforward answer in the affirmative.

\begin{example} \label{example:CHQ}
    There is a \(T_5\)
    space for which P1 has a winning strategy in \(\mathsf G_1(\mathcal N(\mathsf K(X)), \neg \mathcal K_X)\),
    but P1 doesn't have a winning strategy in the finite-open game on \(X\) and P1 doesn't have a predetermined
    winning strategy in \(\mathsf G_1(\mathcal N(\mathsf K(X)), \neg \mathcal K_X)\).
\end{example}
\begin{proof}
    Let \(X_0\) be  the Fortissimo space on the reals (\href{https://topology.pi-base.org/spaces/S000022}{S22}).
    Recall that this topology adds a point \(\infty \not\in \mathbb R\) to \(\mathbb R_d\), the set of the real numbers with the discrete topology,
    so that the neighborhood basis at \(\infty\) consists of co-countable subsets of \(\mathbb R\).
    Finally, let \(X\) be the disjoint union, \([0,1] \sqcup X_0\), of the closed unit interval \([0,1]\) with \(X_0\).
    As the disjoint union of two \(T_5\) spaces (see \cite{Counterexamples}), \(X\) is \(T_5\).
    
    The winning strategy for P1 in the game \(\mathsf G_1(\mathcal N(\mathsf K(Y)), \neg \mathcal K_Y)\)
    where \(Y\) is the Fortissimo space on discrete \(\omega_1\) is described in
    \cite[Ex. 3.24]{CHCompactOpen}.
    We will verify that a modification to this idea works for \(X\).
    Throughout, we will use the identification \(A \mapsto \mathcal N_A\), \(\mathcal A \to \mathcal N(\mathcal A)\).
    
    P1 starts by playing \[\sigma(\emptyset) = K_0 = \{\infty\} \cup [0,1].\]
    Then, for \(n \in \omega\), suppose we have \(\langle K_j : j \leq n \rangle\)
    and \(\langle A_j : j < n \rangle\) defined where \(K_0 \subseteq K_j\) for each \(j \leq n\),
    and \(A_j = \{ x_{j,k} : k \in \omega \} \subseteq X \setminus K_j\) for each \(j < n\).
    P2 must respond to \(K_n\) with some open set that covers \(K_n\).
    Since \(\{\infty\} \cup [0,1] \subseteq K_n\), P2's move can be written as
    \(X \setminus A_n\) where \[A_n = \{ x_{n,j} : j \in \omega \} \subseteq X \setminus K_n \subseteq X_0.\]
    P1 responds to \(X \setminus A_n\) with
    \[\sigma(\langle X \setminus A_j : j \leq n \rangle) = K_{n+1} = K_0 \cup \{ x_{j,k} : j,k \leq n \}.\]
    This defines the strategy \(\sigma\) for P1.
    
    We now show that \(\sigma\) is winning.
    So consider any run of the game according to \(\sigma\), as coded above with
    \(\langle K_n : n \in \omega\rangle\) and \(\langle A_n : n \in \omega \rangle\)
    where \(A_n = \{ x_{n,k} : k \in \omega \} \subseteq X_0\).
    Let \(A = \bigcup \{ A_n : n \in \omega \}\) and note that \(A\) is countable.
    Let \(K\) be a compact subset of \(X\).
    Since \(X_0\) is anticompact (every compact subset of \(X_0\) is finite), there must be some \(n \in \omega\)
    for which \(K \cap A \subseteq K_n \subseteq U_n := X \setminus A_n\).
    
    We show that \(K \subseteq U_n\).
    We already have that \(K \cap A \subseteq U_n\).
    Also, observe that \(K \cap [0,1] \subseteq K_n \subseteq U_n\).
    Now, for any \(x \in ( K \cap X_0 ) \setminus A\),
    \[x \in X_0 \setminus \bigcup_{j\in\omega} A_j = \bigcap_{j\in\omega} X_0 \setminus A_j \subseteq X_0 \setminus A_n \subseteq U_n.\]
    Hence, \(\sigma\) is a winning strategy for P1 in \(\mathsf G_1(\mathcal N(\mathsf K(X)), \neg \mathcal K_X)\).
    
    We now argue that P1 cannot win \(\mathsf G_1(\mathcal N(\mathsf K(X)), \neg \mathcal K_X)\) with a pre-determined strategy.
    By the results of \cite{ClontzDualSelection}, \(\mathsf G_1(\mathcal N(\mathsf K(X)), \neg \mathcal K_X)\) is dual
    to \(\mathsf G_1(\mathcal K_X, \mathcal K_X)\); a particular application of that duality is that
    P1 has a pre-determined winning strategy in \(\mathsf G_1(\mathcal N(\mathsf K(X)), \neg \mathcal K_X)\)
    if and only if P2 has a winning Markov strategy in \(\mathsf G_1(\mathcal K_X, \mathcal K_X)\).
    Since \(X_0\) is anticompact and uncountable, \(X\) is not hemicompact.
    Hence, by Corollary \ref{cor:T1Hemicompact}, P2 does not have a winning Markov strategy in the \(k\)-Rothberger game on \(X\).
    Thus, P1 does not have a pre-determined winning strategy in \(\mathsf G_1(\mathcal N(\mathsf K(X)), \neg \mathcal K_X)\).
        
    Lastly, we argue that P1 doesn't have a winning strategy in the finite-open game on \(X\).
    We do this by showing that P2 actually has a winning Markov strategy in this game.
    Suppose P1 has played \(F_n \in [X]^{<\omega}\).
    Let \(V_n = X_0\) and \(W_n \subseteq [0,1]\) be an open set with \(F_n \cap [0,1] \subseteq W_n\)
    such that the Lebesgue measure of \(W_n\) is less than \(\dfrac{1}{2^{n+2}}\).
    Then let P2 respond to \(F_n\) with \(U_n := V_n \cup W_n\).
    Since the Lebesgue measure of \(\bigcup_{n\in\omega} W_n\) is less than \(1/2\),
    P2 has won the finite-open game.
\end{proof}

\begin{example}[\href{https://topology.pi-base.org/spaces/S000043}{S43}]
    The Sorgenfrey line is an example of a space which is Lindel\"{o}f but not
    \(\omega\)-Lindel\"{o}f. This follows from the fact that the Sorgenfrey plane is not Lindel\"{o}f;
    see \cite{Counterexamples} and Theorem \ref{thm:GerlitsNagy}.
\end{example}
\begin{example}[\href{https://topology.pi-base.org/spaces/S000025}{S25}] \label{example:Reals}
    The space of reals \(\mathbb R\) is an example of a space which is
    \begin{enumerate}[label=(\arabic*),ref=(\arabic*)]
        \item \label{firstReals}
        Markov \(\omega\)-Menger and not Markov \(\omega\)-Rothberger (see Theorems \ref{thm:MarkovOmega} and \ref{thm:OmegaSelectionChar}
        and the fact the reals are not even Rothberger, which is witnessed by any sequence of open covers consisting of intervals with exponentially decreasing diameters),
        \item Strategically \(\omega\)-Menger and not strategically \(\omega\)-Rothberger (same as in \ref{firstReals}),
        \item \(\omega\)-Menger and not \(\omega\)-Rothberger (same as in \ref{firstReals}),
        \item \label{fourthReals} Markov \(k\)-Rothberger and not Markov \(\omega\)-Rothberger (see Corollary \ref{cor:HemicompactMarkov},
        Theorem \ref{thm:OmegaSelectionChar} and the fact that the reals are not even Rothberger),
        \item Strategically \(k\)-Rothberger and not strategically \(\omega\)-Rothberger (same as in \ref{fourthReals}), and
        \item \(k\)-Rothberger and not \(\omega\)-Rothberger (same as in \ref{fourthReals}).
    \end{enumerate}
\end{example}
\begin{example}[\href{https://topology.pi-base.org/spaces/S000027}{S27}]
    The space of rationals \(\mathbb Q\) is an example of a space which is
    \begin{enumerate}
        \item \label{firstRationals} Markov \(\omega\)-Menger and not Markov \(k\)-Menger,
        \item Strategically \(\omega\)-Menger and not strategically \(k\)-Menger,
        \item \(\omega\)-Menger and not \(k\)-Menger,
        \item Markov \(\omega\)-Rothberger and not Markov \(k\)-Rothberger,
        \item Strategically \(\omega\)-Rothberger and not strategically \(k\)-Rothberger, and
        \item \(\omega\)-Rothberger and not \(k\)-Rothberger.
    \end{enumerate}
\end{example}
\begin{proof}
    See Theorem \ref{thm:MarkovOmega}, Proposition \ref{prop:T1FirstCountableHemicompact}, and the fact that \(\mathbb Q\) is not hemicompact \cite{Counterexamples}.
\end{proof}
The following is well-known, but we record a proof of it here for inclusion in the literature.
\begin{proposition}\label{IrrNotMenger}
    The space of irrationals \(\mathbb R \setminus \mathbb Q\) is not Menger.
\end{proposition}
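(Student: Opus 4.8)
The plan is to use the topological invariance of the Menger property together with the classical homeomorphism between the irrationals and Baire space. Since $\mathbb R\setminus\mathbb Q$ is homeomorphic to $\omega^\omega$ (for instance, via the continued-fraction expansion) and being Menger---that is, satisfying $\mathsf S_{\mathrm{fin}}(\mathcal O,\mathcal O)$---is preserved by homeomorphisms, it suffices to exhibit, in $\omega^\omega$, a single sequence $\langle \mathscr U_n : n\in\omega\rangle$ of open covers admitting no finite selections $\mathscr F_n\in[\mathscr U_n]^{<\omega}$ with $\bigcup_n\mathscr F_n$ a cover. Producing one such sequence is exactly the negation of $\mathsf S_{\mathrm{fin}}(\mathcal O,\mathcal O)$.

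For $s\in\omega^{<\omega}$ write $[s]=\{f\in\omega^\omega : s\subseteq f\}$ for the basic clopen set it determines. The covers I would use are the cylinder partitions $\mathscr U_n=\{[s]:s\in\omega^{n+1}\}$; each is an open (indeed clopen) cover of $\omega^\omega$, since every $f$ lies in $[f\restriction(n+1)]$. Now any finite selection $\mathscr F_n\in[\mathscr U_n]^{<\omega}$ is coded by a finite set $S_n\subseteq\omega^{n+1}$ with $\mathscr F_n=\{[s]:s\in S_n\}$, and a point $f$ lies outside $\bigcup\mathscr F_n$ exactly when $f\restriction(n+1)\notin S_n$. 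So the whole problem reduces to the combinatorial task of building a single $f\in\omega^\omega$ with $f\restriction(n+1)\notin S_n$ for every $n$.

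I would construct $f$ by a straightforward recursion on coordinates. Having chosen $f(0),\dots,f(n-1)$, the set $\{s(n): s\in S_n,\ s\restriction n=f\restriction n\}$ of ``forbidden'' last coordinates is finite (as $S_n$ is finite), so I may pick $f(n)$ outside it; this guarantees $f\restriction(n+1)\neq s$ for every $s\in S_n$, i.e. $f\restriction(n+1)\notin S_n$. Carrying this out for all $n$ defines $f\in\omega^\omega$, and by construction $f\notin[s]$ for any $s\in\bigcup_n S_n$, whence $f\notin\bigcup_n\mathscr F_n$ and the finite selections fail to cover. Since the $\mathscr F_n$ were arbitrary, $\omega^\omega$ is not Menger.

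The argument is essentially a diagonalization, so there is no serious obstacle; the only point needing care is the bookkeeping that links a finite subfamily of the length-$(n+1)$ cylinder partition to the finite set $S_n$ of initial segments it excludes, together with the observation that at each stage only finitely many values of $f(n)$ are prohibited, which is what makes the recursion go through. One could alternatively appeal to the classical fact that a dominating subspace of $\omega^\omega$ fails to be Menger, applied to $\omega^\omega$ itself, but the direct diagonalization above avoids importing extra machinery.
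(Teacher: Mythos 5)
Your proof is correct and follows essentially the same route as the paper's: transfer to Baire space via continued fractions, then diagonalize against finite selections from cylinder covers. Your choice of the level-\((n+1)\) partitions \(\{[s]:s\in\omega^{n+1}\}\) (rather than the cover by all cylinders) makes the bookkeeping in the recursion slightly cleaner, but the argument is the same.
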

\begin{proof}
    Using continued fraction expansions, the irrationals are homeomorphic to the Baire space \(\omega^\omega\).
    Let \(\mathscr U_n = \{[t] : t \in \omega^{<\omega}\}\), where \([t] = \{f \in \omega^\omega : f \mbox{ extends } t\}\).
    Suppose that \(F_n \subseteq \mathscr U_n\) are finite. We can find an \(f \in \omega^\omega\) so that \(f \notin \bigcup_n F_n\),
    which shows that the irrationals are not Menger. First pick \(f(0)\) so that \([\langle f(0) \rangle ] \notin F_0\).
    Then if \(f(0),\cdots,f(n)\) have been chosen so that \([\langle f(0),\cdots,f(k) \rangle ] \notin F_k\) for \(k \leq n\),
    we can choose \(f(n+1)\) so that \([\langle f(0),\cdots,f(n+1) \rangle ] \notin F_{n+1}\).
    Continue recursively in this way to produce \(f\) as desired.
\end{proof}
\begin{example}[\href{https://topology.pi-base.org/spaces/S000028}{S28}]
    The space of irrationals \(\mathbb R \setminus \mathbb Q\) is an example of a space which is
    \begin{enumerate}
        \item \(\omega\)-Lindel\"{o}f and not \(\omega\)-Menger (the irrationals are second-countable,
        so by Proposition \ref{prop:SecondCountable} are \(\omega\)-Lindel\"{o}f; from Proposition \ref{IrrNotMenger}
        the irrationals are not Menger,
        so by Theorem \ref{thm:OmegaSelectionChar}, cannot be \(\omega\)-Menger), and
        \item \(k\)-Lindel\"{o}f and not \(k\)-Menger (the irrationals are second-countable,
        so by Proposition \ref{prop:SecondCountable} are \(k\)-Lindel\"{o}f, and they are not Menger as before).
    \end{enumerate}
\end{example}
\begin{example}[\href{https://topology.pi-base.org/spaces/S000022}{S22}] \label{example:MainFortissimo}
    The Fortissimo space on the reals is an example of a space which is
    \begin{enumerate}
        \item Strategically \(k\)-Menger and not Markov \(k\)-Menger,
        \item Strategically \(k\)-Rothberger and not Markov \(k\)-Rothberger,
        \item Strategically \(\omega\)-Menger and not Markov \(\omega\)-Menger, and
        \item Strategically \(\omega\)-Rothberger and not Markov \(\omega\)-Rothberger.
    \end{enumerate}
\end{example}
\begin{proof}
    By similar reasoning to the argument in Example \ref{example:CHQ}, \(X\) is strategically \(k\)-Rothberger and strategically \(\omega\)-Rothberger.
    These in turn, imply that it is strategically \(k\)-Menger and strategically \(\omega\)-Menger.
    However, the Fortissimo space is not hemicompact and is not countable, so by Corollaries \ref{cor:BasicMarkovOmega}
    and \ref{cor:HemicompactMarkov} it is not Markov for any of those games.
\end{proof}

Let \(\mathbb K(X)\) be the set \(\mathsf K(X)\) endowed with the Vietoris topology (see \cite{MichaelSubsets}), as mentioned above.
Let \(\mathcal P_{\mathrm{fin}}(X)\) denote the set \([X]^{<\omega}\) viewed as a subspace
of \(\mathbb K(X)\).
There are many equivalences between properties studied in this paper and these hyperspaces
in \cite{CHVietoris}.
For example, a space \(X\) is \(\omega\)-Rothberger if and only if \(\mathcal P_{\mathrm{fin}}(X)\)
is Rothberger.
Analogous equivalences hold at the Lindel\"{o}f level and also for finite-selections.
Notably, the equivalence that fails is that of \(X\) being \(k\)-Rothberger and \(\mathbb K(X)\) being Rothberger,
as witnessed by \(\mathbb R\).

From results discussed in this work,
every \(k\)-Lindel\"{o}f space is \(\omega\)-Lindel\"{o}f.
As a consequence, if \(\mathbb K(X)\) is Lindel\"{o}f, then the dense subspace \(\mathcal P_{\mathrm{fin}}(X)\)
is also Lindel\"{o}f.
However,
\begin{example}
    There is a \(T_5\) space \(X\) such that \(\mathbb K(X)\) is strategically \(k\)-Rothberger
    and strategically \(\omega\)-Rothberger, but not hereditarily Lindel\"{o}f.
    Consequently, none of the Menger or Rothberger variants discussed in this paper, excluding the Markov
    levels, are necessarily hereditary for \(\mathbb K(X)\).
\end{example}
\begin{proof}
    Let \(X\) be the Fortissimo space on the reals
    and let \(\mathbb R_d\) be the reals with the discrete topology.
    As asserted in \cite{Counterexamples}, \(X\) is \(T_5\).
    Since \(X\) is anticompact, \(\mathbb K(X) = \mathcal P_{\mathrm{fin}}(X)\).
    As mentioned in Example \ref{example:MainFortissimo}, \(X\) is strategically \(k\)-Rothberger and
    strategically \(\omega\)-Rothberger.
    By \cite[Thm. 4.8]{CHVietoris}, \(\mathbb K(X) = \mathcal P_{\mathrm{fin}}(X)\) is strategically \(\omega\)-Rothberger;
    consequently, by anticompactness, \(\mathbb K(X)\) is also strategically \(k\)-Rothberger.
    Indeed, if \(\mathbf K \subseteq \mathbb K(X)\) is compact,
    \(\bigcup \mathbf K \subseteq X\) is compact (see \cite{MichaelSubsets}), hence finite.
    That means that \(\mathbf K\) is finite, as well.
    Lastly, \([\mathbb R_d]^{<\omega}\) is an uncountable relatively discrete subspace of \(\mathbb K(X)\),
    so \(\mathbb K(X)\) is not hereditarily Lindel\"{o}f; moreover, \(\mathbb K(X)\) is neither hereditarily
    Rothberger nor hereditarily Menger.
\end{proof}

Recall that a Luzin subset of \(\mathbb R\) is an uncountable set \(X\) such that
\(X \cap F\) is countable for every closed and nowhere dense \(F\).
Evidently, no Luzin set is meager.

Rothberger \cite{Rothberger} showed that every Luzin set is Rothberger.
Hence, if a Luzin set exists, it is a non-meager Rothberger subset of \(\mathbb R\).

As a response to a question of Galvin, Rec{\l}aw \cite{Reclaw} showed that every Luzin set is undetermined
in the point-open game.
By duality (see \cite{Galvin1978,ClontzDualSelection}), Luzin sets are undetermined for the Rothberger game, as well.
We now collect some facts about Luzin sets related to other properties investigated in this work.
\begin{lemma} \label{lem:LuzinNotStratMenger}
    No Luzin set is strategically Menger or \(k\)-Rothberger.
    Consequently, every Luzin set is an example of a Menger space which is not strategically Menger;
    an example of a Rothberger space which is not strategically Rothberger.
\end{lemma}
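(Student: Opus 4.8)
The plan is to prove the two assertions separately, both resting on a single structural fact: a Luzin set $L$ contains no perfect subset, whence every compact subset of $L$ is countable. Indeed, an uncountable compact $K\subseteq L\subseteq\mathbb R$ would have a nonempty perfect kernel $P\subseteq L$, and any perfect $P\subseteq\mathbb R$ is either nowhere dense (so $P$ is itself a closed nowhere dense set with $L\cap P=P$ uncountable) or has nonempty interior (so it contains an interval and hence a nowhere dense Cantor set $C\subseteq L$ with $L\cap C=C$ uncountable) — either possibility contradicts the Luzin property. Since $L$ is uncountable while all of its compact subsets are countable, $L$ is neither $\sigma$-compact nor hemicompact: a countable covering (resp.\ cofinal) family of compact sets would display $L$ as a countable union of countable sets.

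For the $k$-Rothberger clause I would argue directly from this. As a subspace of $\mathbb R$, the set $L$ is $T_1$ and first countable, so Proposition~\ref{prop:T1FirstCountableHemicompact} applies and shows that $L\models\mathsf S_1(\mathcal K,\mathcal K)$ would force $L$ to be hemicompact. As just observed $L$ is not hemicompact, so $L$ is not $k$-Rothberger.

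For the strategically Menger clause I would pass through $\sigma$-compactness. The key input is the classical collapse for the Menger game over metrizable spaces (Telg\'arsky; see also Scheepers): if P2 has a winning strategy in $\mathsf G_{\mathrm{fin}}(\mathcal O,\mathcal O)$ on a metrizable space, then that space is $\sigma$-compact. Since $L$ is metrizable but not $\sigma$-compact, no winning P2 strategy can exist, i.e.\ $L$ is not strategically Menger. Within the framework of this paper the only missing link is the reduction of a full winning strategy to a Markov one: by Theorem~\ref{thm:MarkovOmega} together with Corollary~\ref{cor:RegularSigmaRelCompac}, a regular space is Markov Menger if and only if it is $\sigma$-compact, so it would suffice to know that full and Markov winning strategies for P2 coincide for the Menger game on metric spaces. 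This strategic-to-Markov collapse is precisely where I expect the difficulty to lie. It is genuinely nonformal, since the regular space of Example~\ref{example:MainFortissimo} is strategically Menger yet not $\sigma$-compact, so metrizability (or at least second countability) is essential; I would either cite the collapse or reprove it by the standard tree-of-plays diagonalization — fix a countable base, restrict P1 to covers drawn from the induced countable family, collect P2's countably many finite responses along the resulting tree, and show that failure of $\sigma$-compactness of $L$ produces a point escaping every branch, contradicting that the strategy wins.

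The consequences then follow at once. By Rothberger's theorem every Luzin set is Rothberger, hence Menger; combined with the two clauses above, $L$ is a Menger space that is not strategically Menger. Finally, since $\mathsf G_1(\mathcal O,\mathcal O)\leq^+_{\mathrm{II}}\mathsf G_{\mathrm{fin}}(\mathcal O,\mathcal O)$ transfers winning P2 strategies, strategically Rothberger implies strategically Menger, so $L$ is also a Rothberger space that is not strategically Rothberger; this last point may equally be read off from Rec\l{}aw's result that Luzin sets are undetermined in the Rothberger game.
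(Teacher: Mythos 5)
Your proposal is correct and follows essentially the same route as the paper: show every compact subset of a Luzin set is countable (hence the set is neither \(\sigma\)-compact nor hemicompact), rule out \(k\)-Rothberger via Proposition \ref{prop:T1FirstCountableHemicompact}, and rule out strategically Menger via the strategic-to-\(\sigma\)-compact collapse for second-countable spaces. The one step you flag as needing a citation or reproof is handled in the paper by citing \cite[Lemma 4.10]{ClontzApplicationsOfLimitedInformation} (second-countable strategically Menger implies Markov Menger) together with Corollary \ref{cor:BasicMarkovOmega}, which is exactly the Telg\'arsky--Scheepers collapse you invoke.
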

\begin{proof}
    We first argue that a Luzin set cannot be \(\sigma\)-compact.
    So let \(X \subseteq \mathbb R\) be a Luzin set and note that \(X\) cannot have
    a non-empty interior since any open subset of \(\mathbb R\) contains a copy of the Cantor set.
    Hence, every compact subset of \(X\) is nowhere dense.
    By definition, it follows that every compact subset of \(X\) must be countable.
    Hence, \(X\) is not \(\sigma\)-compact.

    As a subspace of \(\mathbb R\), \(X\) is second-countable and regular.
    Note that \cite[Lemma 4.10]{ClontzApplicationsOfLimitedInformation} asserts that
    any second-countable strategically Menger space is Markov Menger.
    By Corollary \ref{cor:BasicMarkovOmega},
    a regular Markov Menger space is \(\sigma\)-compact.
    Hence, since \(X\) is not \(\sigma\)-compact, it cannot be Markov Menger and hence, \(X\) cannot be strategically Menger.
    
    Now, since \(X\) is not \(\sigma\)-compact, it is certainly not hemicompact.
    As a subspace of \(\mathbb R\), \(X\) is \(T_1\) and first-countable, so \(X\) cannot be
    \(k\)-Rothberger by Proposition \ref{prop:T1FirstCountableHemicompact}.

    Lastly, by Rothberger's result \cite{Rothberger}, every Luzin set is Rothberger, hence Menger.
    So the final assertion holds.
\end{proof}
In fact, a bit more is true about Luzin sets.
As discussed in \cite{MillerSpecialSubsets}, every Luzin set is universally null, which means
that they have outer measure zero with respect to every continuous Borel probability measure on \(\mathbb R\).
Combining this with \cite[Thm. 3.9]{CKEBP}, no Luzin set can even have the Baire property.

In a separate line of investigation, Scheepers \cite{ScheepersLusin}
discusses selection principle properties of Luzin sets involving
families of open sets with the property that their closures cover the space in question.

\begin{example} \label{ex:LuzinNotOmega}
    Assuming CH, there is a Luzin set which is not \(\omega\)-Rothberger.    
    Indeed, \cite[Lemma 2.6 and Thm 2.8]{CombOpenCovers2} shows that there is a Luzin subset \(L\) of the reals
    such that \(L\) does not satisfy a selection principle \(\mathsf U_{\mathrm{fin}}(\Gamma,\Omega)\).
    According to \cite[Fig. 3]{CombOpenCovers2}, \(\mathsf S_1(\Omega,\Omega) \implies \mathsf U_{\mathrm{fin}}(\Gamma,\Omega)\).
    Hence, \(L\) is not \(\omega\)-Rothberger.
    Now, since every Luzin set is Rothberger, \(L\) is Rothberger without being \(\omega\)-Rothberger.
    Note, moreover, by Theorem \ref{thm:OmegaSelectionChar}\ref{OmegaSelectionRothbergerChar}, \(L\) has some finite power which fails to be Rothberger.
\end{example}
\begin{example}
    Assuming CH, \cite[Thm. 2.13]{CombOpenCovers2} proves the existence of a Luzin set which is \(\omega\)-Rothberger.
    Observe that, by Theorem \ref{thm:StrategicOmega} and Lemma \ref{lem:LuzinNotStratMenger},
    this Luzin set is not strategically \(\omega\)-Rothberger; by the same results,
    this Luzin set is \(\omega\)-Menger but not strategically \(\omega\)-Menger.
\end{example}

\begin{example} \label{ex:OmegaLindelofNotKLindelof}
    In \cite[Ex. 6.4]{CostantiniHolaVitolo}, under MA, a countable space is constructed
    which is not \(k\)-Lindel\"{o}f.
    Since every countable space is trivially \(\omega\)-Lindel\"{o}f, this is an example of a space
    which is \(\omega\)-Lindel\"{o}f but not \(k\)-Lindel\"{o}f.
\end{example}

\section{Open Questions} \label{section:Questions}

We finish with a list of questions.
\begin{question} \label{question:Undetermined}
    Are there spaces \(X\) for which the games \(\mathsf G_1(\mathcal K_X, \mathcal K_X)\) and
    \(\mathsf G_{\mathrm{fin}}(\mathcal K_X, \mathcal K_X)\) are undetermined (that is, that neither player has a winning strategy)?
\end{question}
This two-fold question may reduce to a single question if the single- and finite-selection versions turn out to be the same.
\begin{question}
    Does (strategically) \(k\)-Menger imply (strategically) \(k\)-Rothberger?
\end{question}
Note that, by Proposition \ref{prop:T1FirstCountableHemicompact} and Corollary \ref{cor:HemicompactMarkov},
no metrizable space can be an affirmative response to Question \ref{question:Undetermined}.

It is shown in \cite[Thm. 2.16]{FinitePowersMenger} that, consistently, there are two sets \(X\) and \(Y\) of reals which are \(\omega\)-Menger,
but that \(X \times Y\) is not even Menger.
That is, it is consistent with ZFC that the property of being \(\omega\)-Menger is not finitely productive.
\begin{question}
    Is it consistent with ZFC that any of the properties \(\omega\)-Lindel\"{o}f, \(\omega\)-Menger,
    or \(\omega\)-Rothberger are (finitely) productive?
\end{question}
\begin{question}
    Are any of the properties \(k\)-Lindel\"{o}f, \(k\)-Menger, or \(k\)-Rothberger (finitely) productive?
\end{question}

Corollary \ref{cor:HemicompactMarkov} asserts that the properties of being Markov \(k\)-Menger and
being Markov \(k\)-Rothberger are equivalent for regular spaces.
\begin{question}
    Are the properties of being Markov \(k\)-Menger and Markov \(k\)-Rothberger equivalent
    for all topological spaces?
    Alternatively, is there an example of a (preferably \(T_1\) or \(T_2\)) space which is Markov \(k\)-Menger but not Markov \(k\)-Rothberger
    (equivalently, not hemicompact, by Corollary \ref{cor:T1Hemicompact})?
\end{question}
As a related group of questions,
\begin{question}
    Is there an example of a space which is hemicompact, but not relatively hemicompact?
    Is there an example of a space which is relatively hemicompact, but not hemicompact?
    Is there a nearly hemicompact space which is not hemicompact?
\end{question}

As pointed out in Example \ref{ex:OmegaLindelofNotKLindelof}, \cite[Ex. 6.4]{CostantiniHolaVitolo}
offers an example of a space, assuming MA, that is \(\omega\)-Lindel\"{o}f but not
\(k\)-Lindel\"{o}f.
\begin{question} \label{question:OmegaLindelofNotKLindelof}
    Is there a ZFC example of an \(\omega\)-Lindel{\"{o}}f space which is not \(k\)-Lindel{\"{o}}f?
\end{question}

As Example \ref{ex:LuzinNotOmega} demonstrates, it is consistent with ZFC that there
exists a Rothberger subset of the reals which is not \(\omega\)-Rothberger.
As a contrast to this, it is well-known that, in Laver's model \cite{Laver}, every Rothberger subset of the reals is countable,
and hence \(\omega\)-Rothberger (see \cite{BorelConjecture_LaverMillerSacksModels} and \cite{Rothberger1941}).
Since these assertions are about subsets of reals, one can ask about more general settings.
\begin{question}
    Is it consistent with ZFC that every Rothberger space is \(\omega\)-Rothberger?
\end{question}

In \cite{Zdomskyy}, it is shown to be consistent with ZFC that,
for all metrizable spaces, the properties of being Menger and \(\omega\)-Menger
are equivalent.
Of course, every \(\omega\)-Menger space is Menger without any separation axiom assumptions.
So we are left with
\begin{question}
    Is it consistent with ZFC that every Menger space is \(\omega\)-Menger?
\end{question}

\providecommand{\bysame}{\leavevmode\hbox to3em{\hrulefill}\thinspace}
\providecommand{\MR}{\relax\ifhmode\unskip\space\fi MR }
\providecommand{\MRhref}[2]{%
  \href{http://www.ams.org/mathscinet-getitem?mr=#1}{#2}
}
\providecommand{\href}[2]{#2}


\begin{thebibliography}{10}

\bibitem{ArhangelskiiClassic}
A.~V. Arhangel'skii, \emph{From classic topological invariants to relative
  topological properties}, Sci. Math. Jpn. \textbf{55} (2001), no.~1, 153--201.

\bibitem{CHCompactOpen}
Christopher Caruvana and Jared Holshouser, \emph{Closed discrete selection in
  the compact open topology}, Topology Proc. \textbf{56} (2020), 25--55.

\bibitem{CHVietoris}
\bysame, \emph{Selection games and the {V}ietoris space}, Topology Appl.
  \textbf{307} (2022), 107772.

\bibitem{CHStar}
\bysame, \emph{Translation {R}esults for {S}ome {S}tar-{S}election {G}ames},
  Topology Appl. \textbf{in press} (2024), 108889.

\bibitem{CKEBP}
Christopher Caruvana and Robert~R. Kallman, \emph{An extension of the {Baire}
  property}, Topology Proc. \textbf{55} (2020), 87--114.

\bibitem{AppskcoversII}
A.~Caserta, G.~Di~Maio, Lj. D.~R. Ko{\v{c}}inac, and E.~Meccariello,
  \emph{Applications of {{\(k\)}}-covers {II}}, Topology Appl. \textbf{153}
  (2006), no.~17, 3277--3293.

\bibitem{ClontzApplicationsOfLimitedInformation}
Steven Clontz, \emph{Applications of limited information strategies in
  {Menger}'s game}, Commentat. Math. Univ. Carol. \textbf{58} (2017), no.~2,
  225--239.

\bibitem{ClontzRelatingGamesOfMenger}
\bysame, \emph{Relating games of {Menger}, countable fan tightness, and
  selective separability}, Topology Appl. \textbf{258} (2019), 215--228.

\bibitem{ClontzDualSelection}
\bysame, \emph{Dual selection games}, Topology Appl. \textbf{272} (2020),
  107056.

\bibitem{ClontzHolshouser}
Steven Clontz and Jared Holshouser, \emph{Limited information strategies and
  discrete selectivity}, Topology Appl. \textbf{265} (2019), 106815.

\bibitem{CostantiniHolaVitolo}
Camillo Costantini, {\v{L}}ubica Hol{\'a}, and Paolo Vitolo, \emph{Tightness,
  character and related properties of hyperspace topologies}, Topology Appl.
  \textbf{142} (2004), no.~1-3, 245--292.

\bibitem{DavisGames}
Morton Davis, \emph{Infinite games of perfect information}, Ann. Math. Stud.
  \textbf{52} (1964), 85--101.

\bibitem{AppskcoversI}
G.~Di~Maio, Ljubi{\v{s}}a D.~R. Ko{\v{c}}inac, and Enrico Meccariello,
  \emph{Applications of {{\(k\)}}-covers}, Acta Math. Sin., Engl. Ser.
  \textbf{22} (2006), no.~4, 1151--1160.

\bibitem{DiasScheepers}
Rodrigo~R. Dias and Marion Scheepers, \emph{Selective games on binary
  relations}, Topology Appl. \textbf{192} (2015), 58--83.

\bibitem{Engelking}
Ryszard Engelking, \emph{General {T}opology}, Sigma series in pure mathematics,
  vol.~6, Heldermann Verlag, 1989.

\bibitem{GaleStewart}
David Gale and F.~M. Stewart, \emph{Infinite games with perfect information},
  Contrib. {Theory} of {Games}, {II}, {Ann}. {Math}. {Stud}. {No}. 28, 1953,
  pp.~245--266.

\bibitem{Galvin1978}
Fred Galvin, \emph{Indeterminacy of point-open games}, Bull. Acad. Pol. Sci.,
  S{\'e}r. Sci. Math. Astron. Phys. \textbf{26} (1978), 445--449.

\bibitem{GalvinTelgarsky}
Fred Galvin and Rastislav Telg{\'a}rsky, \emph{Stationary strategies in
  topological games}, Topology Appl. \textbf{22} (1986), 51--69.

\bibitem{GerlitsNagy}
J.~Gerlits and Zs. Nagy, \emph{Some properties of {C}({X}). {I}}, Topology
  Appl. \textbf{14} (1982), 151--161.

\bibitem{HurewiczOriginal}
W.~Hurewicz, \emph{{\"U}ber eine {Verallgemeinerung} des {Borelschen}
  {Theorems}}, Math. Z. \textbf{24} (1925), 401--421.

\bibitem{CombOpenCovers2}
Winfried Just, Arnold~W. Miller, Marion Scheepers, and Paul~J. Szeptycki,
  \emph{The combinatorics of open covers. {II}}, Topology Appl. \textbf{73}
  (1996), no.~3, 241--266.

\bibitem{KocinacSelectedResults}
Lj.D.R. Ko{\v{c}}inac, \emph{Selected results on selection principles},
  Proceedings of the Third Seminar on Geometry and Topology (Tabriz, Iran),
  July 15--17, 2004, pp.~71--104.

\bibitem{Laver}
Richard Laver, \emph{On the consistency of {Borel}'s conjecture}, Acta Math.
  \textbf{137} (1977), 151--169.

\bibitem{McCoyOmegaCovers}
R.~A. McCoy, \emph{K-space function spaces}, Int. J. Math. Math. Sci.
  \textbf{3} (1980), 701--711.

\bibitem{McCoyKcovers}
\bysame, \emph{Function spaces which are k-spaces}, Topology Proc. \textbf{5}
  (1981), 139--146.

\bibitem{Mezabarba}
Renan~M. Mezabarba, \emph{Pointless proofs of the {Menger} and {Rothberger}
  games}, Topology Appl. \textbf{300} (2021), 107774.

\bibitem{MichaelSubsets}
Ernest Michael, \emph{Topologies on spaces of subsets}, Trans. Amer. Math. Soc.
  \textbf{71} (1951), 152--182.

\bibitem{MillerSpecialSubsets}
Arnold~W. Miller, \emph{{S}pecial {S}ubsets of the {R}eal {L}ine}, Handbook of
  Set-Theoretic Topology (Kenneth Kunen and Jerry~E. Vaughan, eds.),
  North-Holland, Amsterdam, 1984, pp.~201--233.

\bibitem{OspiovCompactOpen}
Alexander~V. Osipov, \emph{Selection principles in function spaces with the
  compact-open topology}, Filomat \textbf{32} (2018), no.~15, 5403--5413.

\bibitem{Pawlikowski}
Janusz Pawlikowski, \emph{Undetermined sets of point-open games}, Fundam. Math.
  \textbf{144} (1994), no.~3, 279--285.

\bibitem{PiBase}
\(\pi\)-base community database of~topological counterexamples,
  \url{https://topology.pi-base.org/}, Accessed: 2024-02-01.

\bibitem{Reclaw}
Ireneusz Rec{\l}aw, \emph{Every {Lusin} set is undetermined in the point-open
  game}, Fundam. Math. \textbf{144} (1994), no.~1, 43--54.

\bibitem{Rothberger}
Fritz Rothberger, \emph{Eine {Versch{\"a}rfung} der {Eigenschaft} {C}}, Fundam.
  Math. \textbf{30} (1938), 50--55.

\bibitem{Rothberger1941}
\bysame, \emph{Sur les familles indenombrables de suites de nombres naturels et
  les probl{\`e}mes concernant la propri{\'e}t{\'e} {{\(C\)}}}, Proc. Camb.
  Philos. Soc. \textbf{37} (1941), 109--126.

\bibitem{Sakai1988}
Masami Sakai, \emph{Property {{\(C''\)}} and function spaces}, Proc. Amer.
  Math. Soc. \textbf{104} (1988), no.~3, 917--919.

\bibitem{ScheepersI}
Marion Scheepers, \emph{Combinatorics of open covers. {I}: {Ramsey} theory},
  Topology Appl. \textbf{69} (1996), no.~1, 31--62.

\bibitem{ScheepersIII}
\bysame, \emph{Combinatorics of open covers. {III}: {Games}, {{\(C_ p(X)\)}}},
  Fundam. Math. \textbf{152} (1997), no.~3, 231--254.

\bibitem{scheepers1997sequential}
\bysame, \emph{A sequential property of {$C_p(X)$} and a covering property of
  {Hurewicz}}, Proc. Amer. Math. Soc. \textbf{125} (1997), no.~9, 2789--2795.

\bibitem{ScheepersLusin}
\bysame, \emph{Lusin sets}, Proc. Amer. Math. Soc. \textbf{127} (1999), no.~1,
  251--257.

\bibitem{ScheepersSelectionPrinciples}
\bysame, \emph{Selection principles in topology: {N}ew directions}, Filomat
  \textbf{15} (2001), 111--126.

\bibitem{ScheepersNoteMat}
\bysame, \emph{Selection principles and covering properties in topology}, Note
  Mat. \textbf{22} (2003), no.~2, 3--41.

\bibitem{SchmidtGames}
Wolfgang~M. Schmidt, \emph{On badly approximable numbers and certain games},
  Trans. Amer. Math. Soc. \textbf{123} (1966), 178--199.

\bibitem{Counterexamples}
L.A. Steen and J.A. Seebach, \emph{Counterexamples in topology}, Dover books on
  mathematics, Dover Publications, 1995.

\bibitem{ConceptualProofs}
Piotr Szewczak and Boaz Tsaban, \emph{Conceptual proofs of the {Menger} and
  {Rothberger} games}, Topology Appl. \textbf{272} (2020), 107048.

\bibitem{FinitePowersMenger}
Piotr Szewczak, Boaz Tsaban, and Lyubomyr Zdomskyy, \emph{Finite powers and
  products of {Menger} sets}, Fundam. Math. \textbf{253} (2021), no.~3,
  257--275.

\bibitem{TelgarskySurvey}
Rastislav Telg{\'a}rsky, \emph{Topological games: on the 50th anniversary of
  the {Banach}-{Mazur} game}, Rocky Mt. J. Math. \textbf{17} (1987), 227--276.

\bibitem{Zdomskyy}
Lyubomyr Zdomskyy, \emph{Products of {Menger} spaces in the {Miller} model},
  Adv. Math. \textbf{335} (2018), 170--179.

\bibitem{BorelConjecture_LaverMillerSacksModels}
\bysame, \emph{Selection principles in the {Laver}, {Miller}, and {Sacks}
  models}, Centenary of the Borel conjecture, Providence, RI: American
  Mathematical Society, 2020, pp.~229--242.

\end{thebibliography}
\end{document}